\newtheorem{theorem}{Theorem}[section]
\newtheorem{lemma}[theorem]{Lemma}
\newtheorem{remark}[theorem]{Remark}
\numberwithin{equation}{section}
\begin{document}
\markboth{Z. Shen, S. Chen $\&$ J. Wei}{Double Hopf bifurcation in nonlocal reaction-diffusion
	systems with spatial average kernel}

\title{Double Hopf bifurcation in nonlocal reaction-diffusion
	systems with spatial average kernel\footnote{This research is supported by National Natural Science Foundation of China (No 11771109).}}

\author{Zuolin Shen\footnote{Email: mathust\_lin@foxmail.com},
Shanshan Chen\footnote{Email: chenss@hit.edu.cn.}, and Junjie Wei\footnote{Corresponding author, Email: weijj@hit.edu.cn.}\\
{\small Department of Mathematics,
Harbin Institute of Technology, \hfill{\ }}\\
{\small Harbin, Heilongjiang, 150001, P.R.China\hfill{\ }}\\
}

\date{}
\maketitle

\begin{abstract}

In this paper, we consider a general reaction-diffusion system with
nonlocal effects and Neumann boundary conditions, where a spatial average kernel is chosen to be the nonlocal kernel.
By virtue of the center manifold
reduction technique and normal form theory, we present a new algorithm for computing normal forms associated with the codimension-two double Hopf bifurcation of nonlocal reaction-diffusion equations. The theoretical results are applied to a predator-prey model, and complex dynamic behaviors such as spatially nonhomogeneous periodic oscillations and spatially nonhomogeneous quasi-periodic oscillations could occur.

{\bf Keywords :} reaction-diffusion system; double Hopf bifurcation;
nonlocal effect; normal forms.
\end{abstract}
\section{Introduction}
Reaction-diffusion equations have been proposed to model the complex phenomenon in cell biology, neural network, virus dynamics, biochemical reaction, etc., see \cite{Keller-1970, Shigesada-1979} and references therein.
However, individuals of a species at different locations may compete for common resource or communicate by chemical means \cite{Britton-1989,DuH-2010,Furter-1989}, and nonlocal interactions should be considered.
In 1989, Britton \cite{Britton-1989} proposed a single population model with nonlocal effect, where the nonlocal term takes the following form:
\begin{equation}\label{g*u}
g*u=\int_{\Omega}g(x,y)u(y,t)\mathrm{d}y.
\end{equation}
Here $\Omega$ is the region where the population lives, $u(x,t)$ represents the density of the species at location $x$ and
time $t$. The model is based on the following two assumptions:
\begin{enumerate}
\item [(i)] individuals in grouping together can reduce the risk of predation, which is referred to as the aggregation mechanism;
\item [(ii)] the intraspecific competition at a certain point depends on not only the density
at this point but also a weighted average in the neighborhood of this point.
\end{enumerate}
For unbounded one spatial dimension domain $\Omega=(-\infty,\infty)$, Britton \cite{Britton-1989} also considered the nonlocal effects on two species competition model, and it was shown that the aggregation may lead to the coexistence of the two species.
For bounded domain $\Omega$, a typical scenario of nonlocal dispersal is the \lq\lq spatial average kernel\rq\rq,
that is,
\begin{equation}\label{K*u}
g(x,y)\equiv\dfrac{1}{\mathrm{vol}\,\Omega}.
\end{equation}
Furter and Grinfeld \cite{Furter-1989} obtained that this average kernel can induce spatial nonhomogeneous patterns even for single population model,
see \cite{ShiS-2019} for more general models.

There have been extensive results on the nonlocal effects, including existence and stability of solutions, traveling wave solutions, pattern formation, bifurcation analysis, etc., see \cite{Berestycki-2009,Billingham-2003,Gourley-2000,Gourley-1996,FangZ-2011,LiangS-2003,DuH-2010,Faye_Holzer-2015,Gourley-1993,Merchant-2011,ChenS-2012,ChenY-2016,ChenY-2018,ChenW-2018} and references therein.
For unbounded one spatial dimension domain $\Omega=(-\infty,\infty)$,
Merchant and Nagata \cite{Merchant-2011} chose different types of kernel $g(x,y)$, and showed that the nonlocal competition could induce complex spatiotemporal patterns, see also \cite{Banerjee-2017,Segal-2013}.
Motivated by \cite{Merchant-2011}, Chen {\it et al.} \cite{ChenY-2018,ChenW-2018} considered the case that the spatial domain $\Omega=(0,L)$, and chose the spatial average kernel, i.e., $g(x,y)\equiv1/L$. They found that, for the nonlocal Rosenzweig-MacArthur (RM) and
Holling-Tanner predator-prey models,
the constant positive steady state could lose the stability when
the given parameter passes through some Hopf bifurcation values, and the bifurcating periodic solutions
near such values can be spatially nonhomogeneous.
It is well known that Hopf bifurcation has been used to illustrate the
periodic phenomena in the natural world, such as regular changes in population size, the periodic outbreak of infectious diseases, and chemical oscillations of some autocatalytic reactions \cite{Hassard-1981, Murray-1993, Hale-1977}. However, for reaction-diffusion system without nonlocal effect, the bifurcating periodic solutions near the threshold are always spatially homogeneous, while nonhomogeneous periodic solutions can also
occur through Hopf bifurcation, but they are always unstable, and consequently hard to simulate.
Note that, nonlocal effect could also induced spatially nonhomogeneous steady states through steady state bifurcation\cite{Furter-1989,ShiS-2019}.
Therefore, nonlocal effect could be used as a new mechanism for pattern formation.

We point out that, for reaction-diffusion equations with nonlocal delay effect, the Hopf bifurcation
has also been studied by many researchers, see \cite{ChenS-2012,ChenY-2016,Guo-2015,GuoY-2016,GuoL-2008,SuZ-2014,WangL-2006,Gourley-2002,Gourley-2004} and references therein.
For the homogeneous Neumann boundary condition, Gourley and So \cite{Gourley-2002} showed the existence of Hopf bifurcation for
a food-limited population model, see also \cite{LiangS-2003} for the model with age structure.
For the homogeneous Dirichlet boundary condition,
Chen and Shi \cite{ChenS-2012} developed the method proposed by Busenberg and Huang
\cite{Busenberg-1996}, and studied the existence of Hopf bifurcation near the positive
spatially nonhomogeneous steady state.
Different from \cite{Busenberg-1996,ChenS-2012},
Guo \cite{Guo-2015} proposed the method of Lyapunov-Schmidt reduction to show the existence and even the multiplicity of the spatially nonhomogeneous steady state, and the associated Hopf bifurcation. There are also other results on Hopf bifurcation for reaction-diffusion models with nonlocal delay, see e.g. \cite{Gourley-2004,HuY-2012,SuZ-2014}.

The above mentioned steady state and Hopf bifurcations are both codimension-one bifucation.
A natural question is that whether nonlocal effect could induce codimension-two bifurcation for reaction-diffusion equations.
For the reaction-diffusion equation, the
interaction of a Turing instability (leading to spatially nonhomogeneous steady states) with a
Hopf bifurcation (giving rise to temporal
oscillations)
has been observed and studied in chemical, biological and physical systems, see \cite{Meixner-1997,Maini-1997,DeWit-1996,Rovinsky-1992} and references therein.
For example, Rovinsky and Menzinger \cite{Rovinsky-1992} studied this Turing-Hopf interaction for three models of chemically active media by using Poincar\'{e}-Birkhoff method and shown the bistability of spatially nonhomogeneous steady states and homogeneous oscillations.
In the framework of amplitude equation formalism, De Wit
{\it et al.} \cite{DeWit-1996} investigated the bifurcation scenarios near
the Turing-Hopf singularity. Recently, to show an
accurate dynamic classification at this singularity,
Song {\it et al.}\cite{Song_ZP-2016} applied the normal form theory
proposed by Faria \cite{Faria-2000} to a general reaction-diffusion equation, and obtained a series of explicit formulas for calculating the normal forms associated with the Turing-Hopf bifurcation. This spatiotemporal dynamics induced by the Turing-Hopf bifurcation were observed in several reaction-diffusion models \cite{SongJ-2017,XuW-2018,Baurmann-2007}, see also \cite{AnJ-2018,ShenW-2019} for the reaction-diffusion system with delay.
Another typical codimension-two bifurcation is the double Hopf bifurcation.
As in the Turing-Hopf bifurcation, when the parameters vary near the threshold value, the system may exhibit rich dynamics such as periodic orbit, invariant two torus, invariant three-torus, and even chaos, see e.g. \cite{Guckenheimer-1983,NiuJ-2013, DuN-2019, JiangS-2015, LiJ-2012, XuC-2007}. Recently, for a general delayed reaction-diffusion system with time delay, Du et al. \cite{DuN-2019} also obtained an algorithm for deriving the normal form near a
codimension-two double Hopf bifurcation by virtue of the the normal form method proposed by Faria \cite{Faria-1995,Faria-2000}.

To our knowledge, compared to the classical
reaction-diffusion system, few studies have considered the high codimensional bifurcations in nonlocal reaction-diffusion systems.
Recently, Wu and Song \cite{WuS-2019} studied the dynamical
classification of a nonlocal diffusive Rosenzweig-MacArthur model near the Turing-Hopf singularity.
A numerical simulation \cite{ChenY-2018} revealed that two Hopf bifurcation curves could intersect in a two-parameter plane.
However, there exist no results on double Hopf bifurcation for nonlocal reaction-diffusion systems. In this paper, we aim to consider this problem,
and consider the following general reaction-diffusion system
\begin{equation}\label{model_original}
\begin{cases}
\cfrac{\partial U}{\partial t}=D(\mu)\Delta U
+f(\mu,U,\widehat{U}),&x\in(0,\ell\pi),\;t>0,\\
\cfrac{\partial U}{\partial x}(0,t)=\cfrac{\partial U}{\partial x}(\ell\pi,t)=0,&t>0,\\
\end{cases}
\end{equation}
where $D(\mu)=\text{diag}(d_1(\mu), d_2(\mu),\cdots,d_n(\mu))$ with $d_i(\mu)>0$ and $\mu\in \mathbb{R}^2$, $U=(u_1,u_2,\cdots,u_n)^T\in X$, $\widehat{U}=(\widehat{u}_1, \widehat{u}_2,\cdots,\widehat{u}_n)^T$ with  $ \widehat u_i=\frac{1}{\ell\pi}\int_{0}^{\ell \pi}u_i(y,t)dy $, $i=1,2,\cdots,n$,
$f=(f_1, f_2,\cdots,f_n)^T$ with $f_i$ is $C^k(k\geq 3)$ smooth, and $f_i(\mu,\mathbf{0},\mathbf{0})=0$.
We point out that if $n=1$, then system \eqref{model_original} is reduced to a
general form in \cite{Furter-1989}. If $n=2$, then
\eqref{model_original} becomes a two-component interaction system, which could model the
nonlocal intraspecific and interspecific competition for population models, see \cite{Merchant-2011,Banerjee-2016,Banerjee-2017,Segal-2013}.
The purpose of this paper is to develop an explicit algorithm for computing
normal forms on the center manifold near a codimension-two double-Hopf singularity for model \eqref{model_original}. We should remark that when the ratio of two angular frequencies is some particular value, e.g. 1:2, the corresponding double-Hopf bifurcation may be codimension-three, referred to as the strong resonance case.
In this article, we will not consider this case and focus only the codimension-two double Hopf bifurcation.
We find that, compared with the traditional reaction-diffusion system, \eqref{model_original} is more likely to induce spatial nonhomogeneous patterns, and consequently exhibit rich dynamical behaviors at the corresponding singularity, such as spatially nonhomogeneous periodic solutions, spatially nonhomogeneous quasi-periodic solutions, coexistence of homogeneous/nonhomogeneous oscillations, and so on.

We also adopt the framework of \cite{Faria-2000} to compute the normal forms on the center manifold of system \eqref{model_original} at the codimension-two double Hopf singularity. In summary, we first rewrite system \eqref{model_original} into an abstract form, and by decomposing the phase space into center subspace and its complementary space, we obtain the equivalent system on the center manifold. Then a recursive transformation of variables is used to derive the four-dimensional normal forms. During this process, we construct a Boolean function to deal with the impact of nonlocal terms on the computation, which is the innovation.
Particularly, for the case of $n=2$, we list some additional formulas in Appendix A which could help to obtain all the coefficient vectors that appear in the process of computing normal forms.

The rest of the paper is organized as follows.
The decomposition of phase space and some preliminaries are given in Section 2. The computation of normal forms associated with the codimension-two double Hopf
bifurcation is presented in Section 3.
In Section 4, we apply our theoretical results in Section 3 to a diffusive Holling-Tanner system with spatial average kernel in prey and obtain the normal forms near the
duoble-Hopf singularity.
Some periodic oscillations and quasi-periodic quasi-periodic oscillations are also derived numerically in this section.
Finally, we give some discussion and conclusion for this paper, and in the Appendix, we collect the
details of the coefficient vectors that appear in Section 3
when $n=2$.
Throughout the paper, we denote by $\mathbb{N}$ the set of positive integers, and
$\mathbb{N}_0=\mathbb{N}\cup\{0\}$ the set of non-negative integers.

\section{Decomposition of the phase space}\label{decomp_phase_space}
In this section, we adopt the framework of \cite{Faria-2000} to compute the normal forms of the double Hopf bifurcation.
 To use the center manifold theory for reduction\cite{LinS-1992,Guckenheimer-1983,Hassard-1981}, we need rewrite system \eqref{model_original} into an abstract form and decompose the phase space.

We first define the following real-value Sobolev space
$$ X:=\Big\{(u_1,u_2,\cdots,u_n)^{^T}\in \big[H^2(0, \ell\pi)\big]^n|
~{\partial_x}u_i(0,t)=\partial_x u_i(\ell\pi,t)=0, i=1,2,\cdots,n\Big\},
$$
and then the linear map
$u\rightarrow \frac{1}{\ell\pi}\int_{0}^{\ell \pi}u(y,t)dy$
is smooth from $ H^2(0, \ell\pi)$ to $ H^2(0, \ell\pi)$.
Denote
\begin{equation*}
\mathcal{F}_i:(\mu, U)\rightarrow f_i(\mu,U,\widehat{U}),~ i=1,2,\cdots,n.\\
\end{equation*}
It follows from Appendix C of \cite{Hassard-1981} that $\mathcal{F}_i$ is also smooth from
$\mathbb{R}^2\times \left[H^2(0,\ell\pi)\right]^n$ to $H^2(0,\ell\pi)$. Hence, system \eqref{model_original} can be written as the following abstract form
\begin{equation}\label{abstract_D_F}
\dfrac{d U(t)}{dt}=D(\mu)\Delta U+\mathcal{F}(\mu, U),
\end{equation}
where
$$\mathcal{F}(\mu, U)=\left(\begin{array}{cc}
\mathcal{F}_1(\mu, U)\\
\mathcal{F}_2(\mu,U)\\
~~\vdots\\
\mathcal{F}_n(\mu,U)
\end{array}\right)=\left(\begin{array}{cc}
f_1(\mu,U,\widehat{U})\\
f_2(\mu,U,\widehat{U})\\
~~\vdots\\
f_n(\mu,U,\widehat{U})
\end{array}\right).
$$
Let
	\begin{equation}
	\mathscr{L}(\mu)=D(\mu)\Delta+D_U \mathcal{F}(\mu,0),
	\end{equation}
	where $D_U \mathcal{F}(\mu, 0)$ stands for the Fr\'{e}chet derivative of
	$\mathcal{F}(\mu, U)$ with respect to $U$ at $U=\mathbf{0}$.
To figure out the double Hopf bifurcation with two pairs of purely
imaginary eigenvalues, we define the following complexification
space of $X$:
 $$X_{\mathbb{C}}:=X\oplus iX=\{x_1+i x_2|~x_1,x_2 \in X\},$$
with the complex-valued $L^2$ inner product $\langle\cdot,\cdot\rangle$, defined by
$$
\langle U,V\rangle=\int_0^{\ell\pi}(\bar{u}_1v_1+\bar{u}_2v_2+\cdots+\bar{u}_nv_n)dx,
$$
where $U=(u_1, u_2,\cdots,u_n)^T \in X_{\mathbb{C}}$, $V=(v_1, v_2,\cdots,v_n)^T \in X_{\mathbb{C}}$.

Considering the perturbation caused by
 the nonlocal terms, we rewrite system \eqref{abstract_D_F} in a intuitive form
\begin{equation}\label{abstract_D_L_L_F}
   \dfrac{d U(t)}{dt}=D(\mu)\Delta U+L(\mu)U+\widehat{L}(
   \mu)\widehat{U}+F(U,\widehat{U},\mu),
\end{equation}
where $L$ and $\widehat{L}$ are bounded linear operators from
$\mathbb{R}^2\times X_{\mathbb{C}}$ to $X_{\mathbb{C}}$, and
$F: X_{\mathbb{C}}\times X_{\mathbb{C}}\times\mathbb{R}^2
\rightarrow X_{\mathbb{C}}$ is a $C^k(k\geq 3)$ function
such that $F(0,0,\mu)=0$ and $D_UF(0, 0,\mu)=
D_{\widehat{U}}F(0,0,\mu)=0$.

Then the linearization of system \eqref{abstract_D_L_L_F} at $\mathbf{0}$ takes the following form
\begin{equation}\label{linearization}
\dfrac{d U(t)}{dt}=D(\mu)\Delta U+L(\mu)U+\widehat{L}(
\mu)\widehat{U}.
\end{equation}
It is well known that the eigenvalue problem
\begin{equation*}
-\Delta \xi=\sigma\xi, ~x\in(0,\ell\pi),~\xi'(0)=\xi'(\ell\pi)=0
\end{equation*}
has eigenvalues $\sigma_n=\frac{n^2}{\ell^2}$ ($n\in \mathbb{N}_0$), and the corresponding normalized eigenfunctions
\begin{equation}\label{xi}
\xi_{n}(x)=\cfrac{\cos{\frac{n}{\ell}x}}
{\parallel\cos{\frac{n}{\ell}x}\parallel}=\begin{cases}
\sqrt{\frac{1}{\ell\pi}}, &n=0,\\
\sqrt{\frac{2}{\ell\pi}}\cos{\frac{n}{\ell}x},&n\in\mathbb{N}.
\end{cases}
\end{equation}
Letting $\beta_n^i(x)=\xi_n(x)e_i$, $i=1,2,\cdots,n$, where $e_i$ is the $i$th
unit coordinate vector of $\mathbb{R}^n$, we see that $\{\beta_n^i\}_{n\in\mathbb{N}_0}$ are eigenfunctions of $-D(\mu)\Delta$ with corresponding eigenvalues $d_i(\mu)\frac{n^2}{\ell^2}$, and $\{\beta_n^i\}_{n\in\mathbb{N}_0}$ form an orthonormal basis
of $X_{\mathbb{C}}$.

For $U\in X_{\mathbb{C}}$ and $\beta_k=(\beta_k^1, \beta_k^2,\cdots,\beta_k^n)$, we define
$\langle\beta_k,U\rangle=\left(
\langle \beta_k^1,U\rangle,
\langle \beta_k^2,U\rangle,\cdots,\langle \beta_k^n,U\rangle
\right)^{^T}$,
and denote 
$$\mathcal{B}_k:=\text{span}\left\{~\langle \beta_k^j,U\rangle\beta_k^j
~|~U\in X_{\mathbb{C}}, j=1,2,\cdots,n~\right\}.$$
Let
$$\varphi = \sum^{\infty}_{k=0}\xi_k(x)\left(\begin{array}{c}
a_k^1\\a_k^2\\
~\vdots\\
a_k^n
\end{array}\right)$$
be the eigenfunction with respect to eigenvalue $\lambda(\mu)$. Then
\begin{equation}\label{CE_varphi}
	\lambda(\mu)\varphi-D(\mu)\Delta\varphi-L(\mu)\varphi
	-\widehat{L}(\mu)\widehat{\varphi}=0,
\end{equation}
where $\widehat{\varphi}=\frac{1}{\ell\pi}
\int_{0}^{\ell\pi}\varphi dx$. Note that
\begin{equation}\label{hat_xi}
	\dfrac{1}{\ell\pi}
	\int_{0}^{\ell\pi}\xi_n(x)dx=\begin{cases}
	\dfrac{1}{\sqrt{\ell\pi}},~&n=0,\\
	~~0,~&n\in\mathbb{N}.
	\end{cases}
\end{equation}
Then \eqref{CE_varphi} is equivalent to a sequence of
characteristic equations:
\begin{equation}\label{CE_n}
	\begin{cases}
	\text{det}\Big(\lambda(\mu)I-L(\mu)-\widehat{L}(\mu)
	\Big)=0, &n=0,\\
	\text{det}\Big(\lambda(\mu)I+\dfrac{n^2}{\ell^2}D(\mu)-L(\mu)
	\Big)=0, &n\in\mathbb{N}.\\

	\end{cases}
\end{equation}

To consider the double Hopf bifurcation, we assume that there exists
$\mu_0\in \mathbb{R}^2$ such that the following conditions hold:

\begin{itemize}\label{assumption}
\item [$\mathbf{(H_1)}$] There exist a neighborhood $\mathscr{N}$ of $\mu_0$ and $n_1, n_2\in \mathbb{N}_0$ such that, for $\mu\in\mathscr N$, the linear system \eqref{linearization} has two pairs of complex simple eigenvalues $\alpha_{1}(\mu)\pm\omega_1(\mu)$ and
$\alpha_{2}(\mu)\pm\omega_2(\mu)$, which are both continuously differentiable
in $\mu$ with $\alpha_{1}(\mu_0)=0$, $\omega_1(\mu_0)=\omega_1>0$, $\alpha_{2}(\mu_0)=0$, $\omega_2(\mu_0)=\omega_2>0$, and all other eigenvalues of \eqref{linearization} have non-zero real parts for $\mu\in\mathscr N$.
\item [$\mathbf{(H_2)}$] Assume that $\omega_1:\omega_2\neq i:j$ for $i,j\in\mathbb{N}$
and $1\leq i\leq j\leq 4$, i.e.,
we only consider the codimension-two double Hopf bifurcation of non-resonance and weak resonance
instead of the codimension-three of strongly resonant case.
\item [$\mathbf{(H_3)}$] The conjugate eigenvalues $\alpha_k(\mu)\pm\omega_{k}(\mu)$ are obtained by \eqref{CE_n}$_{n_k}$, and the corresponding
eigenvalues belong to $\mathcal{B}_{n_k}$ for $k=1, 2$. Without lose of
generality, we assume $n_1\leq n_2$.
\end{itemize}

Let $\mu=\mu_0+\alpha$, where $\alpha=(\alpha_1, \alpha_2)\in\mathbb{R}^2$, and then \eqref{abstract_D_L_L_F} can be transformed as
\begin{equation}\label{abstract_D_L_L_F_0}
	\dfrac{d U(t)}{dt}=D_0\Delta U+L_0U+\widehat{L}_0\widehat{U}
	+\widetilde{F}(U,\widehat{U},\alpha),
\end{equation}
where $D_0=D(\mu_0)$, $L_0=L(\mu_0)$,
$\widehat{L}_0=\widehat{L}(\mu_0)$, and
$\widetilde{F}(U,\widehat{U}, \alpha)=[D(\alpha+\mu_0)-D_0]
\Delta U+[L(\alpha+\mu_0)-L_0] U
+[\widehat{L}(\alpha+\mu_0)-\widehat{L}_0]\widehat{U}
+F(U,\widehat{U},\alpha+\mu_0)$.
Then the linear system of \eqref{abstract_D_L_L_F_0} on $\mathcal{B}_{n_k}$ is equivalent to the following ODEs on $\mathbb{C}^n$:
\begin{equation}\label{ode_A_n}
	\dot{z}(t)=A_{n_k}z(t),
\end{equation}
where $A_{n_k}$ is an $n\times n$ matrix, and
\begin{equation*}
	A_{n_k} y(t)=\begin{cases}
	~L_0y(t)+\widehat{L}_0 y(t), &n_k=0,\\
	-\dfrac{n_k^2}{\ell^2}D_0y(t)+L_0y(t),&n_k\in\mathbb{N}.
	\end{cases}
\end{equation*}
Denote by $A_{n_k}^*$ the formal adjoint of $A_{n_k}$ under the scalar
product on $\mathbb{C}^n$:
$$(\psi,\phi)_{_{\mathbb{C}^n}}=\overline{\psi}\,\phi,
~\text{for}~\psi^{^T}, \phi\in \mathbb{C}^n.$$

Let $\Lambda=\{\pm i\omega_1, \pm i\omega_2\}$ and let
\begin{equation*}
	\begin{array}{ll}
	\Phi_1=(\phi_1, \phi_2),~\Phi_2=(\phi_3, \phi_4),~
	\Psi_1=\left(\begin{array}{cc}
	\psi_1\\ \psi_2
	\end{array}\right),~\Psi_2=\left(\begin{array}{cc}
	\psi_3\\ \psi_4
	\end{array}\right),
	\end{array}
\end{equation*}
be the basis of the generalized eigenspace of $A_{n_k}$ and $A_{n_k}^*$
corresponding to the eigenvalues $\Lambda$, respectively. Then
\begin{equation}\label{A_k_B_k}
	A_{n_k}\Phi_k=\Phi_k B_k,~
	 A_{n_k}^*\Psi_k=\bar{B}_k\Psi_k,~(\Psi_k, \Phi_k)_{_{\mathbb{C}^n}}=I_2,~k=1,2,
\end{equation}
where $B_1=\text{diag}(i\omega_1, -i\omega_1)$,
$B_2=\text{diag}(i\omega_2, -i\omega_2)$, and
$I_2$ is an $2\times2$ identity matrix. Then we can decompose
	the phase space $X_{\mathbb{C}}$:
\begin{equation}\label{P_Ker}
	X_{\mathbb{C}}=P\oplus \text{Ker}\pi,
\end{equation}
where $P=\text{Im} \pi$, and $\pi:X_{\mathbb{C}}\rightarrow P$ is the
projection, defined by
\begin{equation*}
	\pi(U)=\sum_{k=1}^2\Phi_k\left(\Psi_k,
	\langle \beta_{n_k},U\rangle\right)_{_{\mathbb{C}^n}}\xi_{n_k}.
\end{equation*}
Therefore, $U\in X_{\mathbb{C}}$ can be rewritten in the following form:
\begin{equation}\label{U_decompose}
\begin{array}{ll}
    	U&=\sum_{k=1}^2(\Phi_k \tilde{z}_k(t))\xi_{n_k}+w\\
    	&=\Phi z^x+w,
\end{array}
\end{equation}
where $\tilde{z}_k(t)=\left(\Psi_k,
\langle \beta_{n_k},U\rangle\right)_{_{\mathbb{C}^n}}\in \mathbb{C}^2$, $\Phi=(\Phi_1, \Phi_2)$, $z^x=(z_1\xi_{n_1}, z_2\xi_{n_1},
z_3\xi_{n_2},z_4\xi_{n_2})^T$,  and $w\in \text{Ker}\pi$.
For simplification of notations, we denote $z(t)=\text{col}(\tilde{z}_1(t), \tilde{z}_2(t))=(z_1(t), z_2(t), z_3(t), z_4(t))^{^T}\in \mathbb{C}^4$
and
\begin{equation}\label{F_z_w_w_lapha}
\widetilde{F}(z,w,\widehat{w},\alpha)
=\widetilde{F}\left(\sum_{k=1}^2(\Phi_k \tilde{z}_k(t))\xi_{n_k}+w,\frac{1}{\ell\pi}
\int_{0}^{\ell\pi}\left(\sum_{k=1}^2(\Phi_k \tilde{z}_k(t))\xi_{n_k}+w\right)dx,\alpha\right).
\end{equation}
In the following, we will also use the symbol $(z,w,\widehat{w},\alpha)$ instead of
$(U,\widehat{U},\alpha)$.
Now system \eqref{abstract_D_L_L_F_0} is equivalent to the following abstract ODEs in $\mathbb{C}^4\times \mathrm{Ker} \pi$
\begin{equation}\label{z_w_beta}
	\begin{cases}
	~\dot{z}~=~Bz+\bar{\Psi}\left(\begin{array}{cc}
	\big\langle\beta_{n_1},\widetilde{F}(z,w,\widehat{w},\alpha)
	\big\rangle\\
	\big\langle\beta_{n_2},\widetilde{F}(z,w,\widehat{w},\alpha)
	\big\rangle
	\end{array}\right),\\
	\dfrac{d}{dt}w=\mathscr{L}_1w+(I-\pi)
	\widetilde{F}(z,w,\widehat{w},\alpha),	
	\end{cases}
	\end{equation}
where $B=\text{diag}(B_1, B_2)$, $\Psi=\text{diag}(\Psi_1, \Psi_2)$, and $\mathscr L_1$ is
the restriction of $\mathscr{L}(\mu_0)$ on $\text{Ker}\pi$.

\section{Center manifold reduction and normal forms for double Hopf bifurcation}
\label{section_Normal_Form}
\subsection{Center manifold reduction}
Consider the formal Taylor expansions
$$\widetilde{F}(U,\widehat{U}, \alpha)=\sum_{j\geq 2}
\dfrac{1}{j!}\widetilde{F}_j(U,\widehat{U},\alpha),~\alpha\in
\mathbb{R}^2, U\in X_{\mathbb{C}},$$
where $\widetilde{F}_j$ is the $j$th Fr\'{e}chet derivation of
$\widetilde{F}$. Then system \eqref{z_w_beta} can be rewritten
as
\begin{equation}\label{z_w_f}
	\begin{cases}
		~\dot{z}~=~Bz+\displaystyle\sum_{j\geq 2}\dfrac{1}{j!}f_j^1(z,w,\widehat{w},\alpha),\\
		\dfrac{d}{dt}w=\mathscr{L}_1w+\displaystyle\sum_{j\geq 2}\dfrac{1}{j!}f_j^2(z,w,\widehat{w},\alpha),
	\end{cases}
\end{equation}
where $\widehat{w}=\frac{1}{\ell\pi}\int_{0}^{\ell\pi}w \mathrm{d}x
\in\mathrm{Ker}\pi$, and $f_j=(f_j^1, f_j^2)$ is defined by
\begin{equation}\label{f_j}
	\begin{array}{ll}
	f_j^1(z,w,\widehat{w},\alpha)
	=\bar{\Psi}\left(\begin{array}{cc}
	\big\langle \beta_{n_1},\widetilde{F}_j(z,w,\widehat{w},\alpha)
	\big\rangle\\
	\big\langle \beta_{n_2},\widetilde{F}_j(z,w,\widehat{w},\alpha)
	\big\rangle
	\end{array}\right),
	\vspace{0.3cm}\\
    f_j^2(z,w,\widehat{w},\alpha)
    =(I-\pi)\widetilde{F}_j(z,w,\widehat{w},\alpha),
	\end{array}
\end{equation}
with $\widetilde{F}_j(z,w,\widehat{w},\alpha)=
\widetilde{F}_j(U,\widehat{U}, \alpha)$.

It follows from \cite{Faria-2000} (see also \cite{Chow-1982}) that the normal forms of
\eqref{z_w_f} can be obtained by a recursive transformation of
variables
$$(z,w,\alpha)=(\widetilde{z}, \widetilde{w}, \alpha)
+\dfrac{1}{j!}(U_j^1(\widetilde{z}, \alpha), U_j^2(\widetilde{z}, \alpha)), j
\geq 2,$$
with $U_j=(U_j^1, U_j^2)\in V_j^{4+2}(\mathbb{C}^4)\times
V_j^{4+2}(\text{Ker}\pi)$. Here, for a normed space $Y$, we
denote $V_j^{4+2}(Y)$ be the space of homogeneous polynomials
of degree $j$ in $4+2$ variables $z=(z_1, z_2, z_3, z_4)$
, $\alpha=(\alpha_1, \alpha_2)$ with coefficients in $Y$, that is,
\begin{equation}\label{v_4+2}
	V_j^{4+2}=\left\{\sum_{|(p,l)|=j}c_{(p,l)}z^p\alpha^l: (p,l)\in \mathbb{N}^{4+2}_0, c_{(p,l)}\in Y\right\},
\end{equation}
where $p=(p_1,p_2,p_3,p_4)\in\mathbb{N}_0^4$, $l=(l_1,l_2)\in\mathbb{N}_0^2$, $\sum_{i=1}^4 p_i+\sum_{i=1}^2 l_i=j$, $z^p=z_1^{p_1}z_2^{p_2}z_3^{p_3}z_4^{p_4}$, $\alpha^{l}=\alpha_1^{l_1}\alpha_2^{l_2}$, and the norm is defined as the sum
 of the norms of the coefficients $|\sum_{|(q,l)|=j}c_{(q,l)}z^q\alpha^l|
 =\sum_{|(q,l)|=j}|c_{(q,l)}|_{Y}$.

We denote by $\bar{f}_j=(\bar{f}_j^1, \bar{f}_j^2)$ the terms of order $j$ obtained after the computation of normal forms in the preceding steps, and define the operators
$M_j=(M_j^1, M_j^2), j\geq 2$ by
\begin{equation}\label{M_j_operator}
\begin{array}{ll}
M_j^1:~V_j^{4+2}(\mathbb{C}^4)~\rightarrow~
V_j^{4+2}(\mathbb{C}^4),\\
(M_j^1 p)(z,\alpha)=D_zp(z, \alpha)Bz-Bp(z,\alpha),\\
M_j^2:~V_j^{4+2}(\text{Ker}\pi)~\rightarrow~
V_j^{4+2}(\text{Ker}\pi),\\
(M_j^2 h)(z,\alpha)=D_zh(z, \alpha)Bz-\mathscr{L}_1p(z,\alpha).\\
\end{array}
\end{equation}
With the recursive procedure and dropping the tilde for simplicity of notations, \eqref{z_w_f} becomes
\begin{equation}\label{z_w_g}
\begin{cases}
~\dot{z}~=~Bz+\displaystyle\sum_{j\geq 2}\dfrac{1}{j!}g_j^1(z,w,\widehat{w},\alpha),\\
\dfrac{d}{dt}w=\mathscr{L}_1w+\displaystyle\sum_{j\geq 2}\dfrac{1}{j!}g_j^2(z,w,\widehat{w},\alpha),
\end{cases}
\end{equation}
where $g_j=(g_j^1,g_j^2), j\geq 2$, are the new terms of
order $j$ and given by
$$g_j(z,w,\widehat{w},\alpha)=\bar{f}_j(z,w,\widehat{w},\alpha)
-M_jU_j(z,\alpha).$$
Here, $U_j\in V_j^{4+2}(\mathbb{C}^4)\times
V_j^{4+2}(\text{Ker}\pi)$ can be computed by
\begin{equation}\label{U_j}
U_j(z,\alpha)=(M_j)^{-1}\mathbf{P}_{\mathrm{Im},j}
\bar{f}_j(z,0,0,\alpha),
\end{equation}
where $M_j^{-1}$ is the inverse of $M_j$ with range defined
on $\text{Ker}(M_j^1)^c\times\text{Ker}(M_j^2)^c$,
$\mathbf{P}_{\mathrm{Im},j}=(\mathbf{P}^1_{\mathrm{Im},j},\mathbf{P}^2_{\mathrm{Im},j})$ is the projection operator associated
with the preceding decomposition of $V_j^{4+2}(\mathbb{C}^4)\times
V_j^{4+2}(\text{Ker}\pi)$ over $\mathrm{Im}(M_j^1)\times \mathrm{Im}(M_j^2)$.

\subsection{Normal forms up to second order}\label{normal_forma_2}


By \eqref{M_j_operator} and assumption $\mathbf{(H_2)}$, it is
easy to verify that
\begin{equation}\label{M_j^1(z,lapha)}
	\begin{array}{ll}
	M_j^1(z^p \alpha^l e_k)=D_z(z^p \alpha^l e_k)Bz-Bz^p \alpha^l e_k\\
	=\begin{cases}
	\left(i\omega_1(p_1-p_2)+i\omega_2(p_3-p_4)
	+(-1)^ki\omega_1\right)z^p \alpha^l e_k, k=1,2,\\
	\left(i\omega_1(p_1-p_2)+i\omega_2(p_3-p_4)
	+(-1)^ki\omega_2\right)z^p \alpha^l e_k, k=3,4.
	\end{cases}
	\end{array}
\end{equation}
Here $e_k$ is the $k$th unit coordinate vector of $\mathbb{R}^4$, and $z^p, \alpha^{l}$ are defined as in \eqref{v_4+2}.
 Therefore,
\begin{equation}\label{KerM_2^1}
	\mathrm{Ker}(M_2^1)=\text{span}\{\alpha_iz_1e_1, \alpha_iz_2e_2,
	\alpha_iz_3e_3, \alpha_iz_4e_4\},~ i=1,2.
\end{equation}
Hence, the normal forms up to second order of \eqref{abstract_D_F} on the center manifold of
the origin near $\mu=\mu_0$ has the form
\begin{equation}\label{norm_z_2_order}
	\dot{z}=Bz+\dfrac{1}{2!}g_2^1(z,0,0,\alpha)+h.o.t.,
\end{equation}
with $g_2^1(z,0,0,\alpha)=\mathrm{Proj}_{\mathrm{Ker}(M_2^1)}f_2^1(z,0,0,\alpha)$.

To show the specific expressions of $g_2^1(z,0,0,\alpha)$, we consider the Taylor expansions
of $D(\mu)$, $L(\mu)$ and $\widehat{L}(\mu)$:
\begin{equation*}
	\begin{array}{ll}
	D(\mu)=&D_0+\alpha_1D_1^{(1,0)}+\alpha_2 D_1^{(0,1)}
	+\dfrac{1}{2!}\left(\alpha_1^2D_2^{(2,0)}+2\alpha_1\alpha_2
	D_2^{(1,1)}+\alpha_2^2D_2^{(0,2)}\right)+\cdots,
	\vspace{0.2cm}\\
	L(\mu)=&L_0+\alpha_1L_1^{(1,0)}+\alpha_2 L_1^{(0,1)}
	+\dfrac{1}{2!}\left(\alpha_1^2L_2^{(2,0)}+2\alpha_1\alpha_2
	L_2^{(1,1)}+\alpha_2^2L_2^{(0,2)}\right)+\cdots,
	\vspace{0.2cm}\\
	\widehat{L}(\mu)=&\widehat{L}_0+\alpha_1\widehat{L}_1^{(1,0)}+\alpha_2 \widehat{L}_1^{(0,1)}
	+\dfrac{1}{2!}\left(\alpha_1^2\widehat{L}_2^{(2,0)}+2\alpha_1\alpha_2
	\widehat{L}_2^{(1,1)}+\alpha_2^2\widehat{L}_2^{(0,2)}\right)+\cdots.\\
	\end{array}
\end{equation*}
Therefore, the second order term of $\widetilde{F}$ is
\begin{equation}\label{tilde_F_2_U}
    \begin{array}{ll}
    \widetilde{F}_2(U,\widehat{U},\alpha)&=
    2\left(\alpha_1D_1^{(1,0)}+\alpha_2 D_1^{(0,1)} \right)\Delta U
    +2\left(\alpha_1L_1^{(1,0)}+\alpha_2 L_1^{(0,1)}\right)U\\
    &~~+2\left(\alpha_1\widehat{L}_1^{(1,0)}+\alpha_2 \widehat{L}_1^{(0,1)}\right)
    \widehat{U}+F_2(U,\widehat{U},\alpha).
    \end{array}
\end{equation}
Recalling that $F(0,0,\mu)=0$, $D_UF(0,0,\mu)=0$ and
$D_{\widehat{U}}F(0,0,\mu)=0$, we have $F_2(U,\widehat{U},\alpha)=F_2(U,\widehat{U},0)$. Plug \eqref{U_decompose} into \eqref{tilde_F_2_U} at $w=0$, and then $\widetilde{F}_2(U,\widehat{U},\alpha)$ becomes
\begin{equation*}
\begin{array}{ll}
\widetilde{F}_2(z,0,0,\alpha)&=\widetilde{F}_2(\Phi z^x,\Phi \widehat{z}^x,
\alpha)\\
&=2\left(\alpha_1D_1^{(1,0)}+\alpha_2 D_1^{(0,1)} \right)\Delta (\Phi z^x)
+2\left(\alpha_1L_1^{(1,0)}+\alpha_2 L_1^{(0,1)}\right)\Phi z^x\\
&~~+2\left(\alpha_1\widehat{L}_1^{(1,0)}+\alpha_2 \widehat{L}_1^{(0,1)}\right)
\Phi \widehat{z}^x+F_2(\Phi z^x,\Phi \widehat{z}^x,0),
\end{array}
\end{equation*}
where $\widehat{z}^x=\frac{1}{\ell\pi}\int_{0}^{\ell\pi}z^x\mathrm{d}x$ with $z^x$ is defined as in \eqref{U_decompose}.
By \eqref{f_j}, we have
\begin{equation}\label{f_2^1_z_0_alpha}
\frac{1}{2!}f_2^1(z,0,0,\alpha)=\frac{1}{2!}\bar{\Psi}\left(\begin{array}{l}
\big\langle\beta_{n_1},\widetilde{F}_2(z,0,0,\alpha)\big\rangle\\
\big\langle\beta_{n_2},\widetilde{F}_2(z,0,0,\alpha)\big\rangle
\end{array}\right).
\end{equation}
To write \eqref{f_2^1_z_0_alpha} explicitly, we
define the following Boolean function
\begin{equation}\label{Boolean}
	\delta(k)=\langle\widehat{\xi}_{k},\xi_{k}\rangle=\begin{cases}
	1,&k=0,\\
	0,&k\neq 0.
	\end{cases}
\end{equation}

It follows from \eqref{M_j^1(z,lapha)} and the fact
\begin{equation*}
	\int_{0}^{\ell\pi}\xi_{n_1}^2dx=\int_{0}^{\ell\pi}\xi_{n_2}^2dx=1
\end{equation*}
that
\begin{equation}\label{g_2^1}
	\dfrac{1}{2!}g_2^1(z,0,0,\alpha)=\dfrac{1}{2!}\mathrm{Proj}
	_{\mathrm{Ker}(M_2^1)}f_2^1(z,0,0,\alpha)=\left(
	\begin{array}{cc}
	B_{11}\alpha_1z_1+B_{21}\alpha_2z_1\\
	\overline{B_{11}}\alpha_1z_2+\overline{B_{21}}\alpha_2z_2\\
	B_{13}\alpha_1z_3+B_{23}\alpha_2z_3\\
	\overline{B_{13}}\alpha_1z_4+\overline{B_{23}}\alpha_2z_4\\
	\end{array}\right),
\end{equation}
where
\begin{equation}\label{normal_forms_B2}
    \begin{array}{ll}
    B_{11}=\bar{\psi}_1\left(-\dfrac{n_1^2}{\ell^2}D_1^{(1,0)}\phi_1
    +L_1^{(1,0)}\phi_1+\widehat{L}_1^{(1,0)}\phi_1\delta(n_1)\right),\\
    B_{21}=\bar{\psi}_1\left(-\dfrac{n_1^2}{\ell^2}D_1^{(0,1)}\phi_1
    +L_1^{(0,1)}\phi_1+\widehat{L}_1^{(0,1)}\phi_1\delta(n_1)\right),\\
    B_{13}=\bar{\psi}_3\left(-\dfrac{n_2^2}{\ell^2}D_1^{(1,0)}\phi_3
    +L_1^{(1,0)}\phi_3+\widehat{L}_1^{(1,0)}\phi_3\delta(n_2)\right),\\
    B_{23}=\bar{\psi}_3\left(-\dfrac{n_2^2}{\ell^2}D_1^{(0,1)}\phi_3
    +L_1^{(0,1)}\phi_1+\widehat{L}_1^{(0,1)}\phi_1\delta(n_2)\right).\\
    \end{array}
\end{equation}

\subsection{Normal forms up to third order}
From \eqref{M_j^1(z,lapha)}, we have
\begin{equation}\label{Ker_M_31}
	\mathrm{Ker}(M_3^1)=\mathrm{span}
	\left\{z_1^2z_2e_1, z_1z_3z_4e_1, z_1z_2^2e_2,
	z_2z_3z_4e_2, z_3^2z_4e_3, z_1z_2z_3e_3,
	z_3z_4^2e_4, z_1z_2z_4e_4\right\}.
\end{equation}
According to \eqref{z_w_g}, the normal forms up to third order has the form
$$\dot{z}=Bz+\dfrac{1}{2!}g_2^1(z,0,0,\alpha)
+\dfrac{1}{3!}g_3^1(z,0,0,0)+\cdots,$$
where $g_3^1(z,0,0,0)=\mathrm{Proj}_{\mathrm{Ker}(M_3^1)}
\bar{f}^1_3(z,0,0,0)$. The new third order $\bar{f}^1_3(z,0,0,0)$ can be calculated by
\begin{equation}\label{bar_f_3^1}
    \begin{array}{lr}
    \bar{f}^1_3(z,0,0,0)=f_3^1(z,0,0,0)&+\dfrac{3}{2}
    \Big(D_zf_2^1(z,0,0,0)U_2^1(z,0)
    +D_wf_2^1(z,0,0,0)U_2^2(z,0)\\
    &+D_{\widehat{w}}f_2^1(z,0,0,0)\widehat{U}
    _2^2(z,0)-D_zU_2^1(z,0)g_2^1(z,0,0,0)\Big),
    \end{array}	
\end{equation}
where $U_2^1, U_2^2$ are given as in \eqref{U_j}, and $\widehat U_2^2=\frac{1}{\ell\pi}\int_0^{\ell\pi}U_2^2 ~\text{d}x$.
It follows from \eqref{g_2^1} that $g_2^1(z,0,0,0)=0$, and we still have to calculate the following four parts:
\begin{equation*}
\begin{array}{ll}
\text{Proj}_{\mathrm{Ker}(M_3^1)}f_3^1(z,0,0,0),
~~\text{Proj}_{\mathrm{Ker}(M_3^1)}(D_zf_2^1(z,0,0,0)U_2^1(z,0)),~~\\
\text{Proj}_{\mathrm{Ker}(M_3^1)}( D_wf_2^1(z,0,0,0)U_2^2(z,0)),~~
\text{Proj}_{\mathrm{Ker}(M_3^1)}( D_{\widehat w}f_2^1(z,0,0,0)\widehat U_2^2(z,0)),
\end{array}
\end{equation*}
which will be shown in the following.

\paragraph{(a) The computation of $\mathrm{Proj}_{\mathrm{Ker} (M_3^1)}f_3^1(z,0,0,0)$}.

The third order Fr\'{e}chet derivative of $\widetilde{F}(U,\widehat{U},\alpha)$ at
$(\Phi z^x, \Phi\widehat{z}^x,0)$ is
\begin{equation*}
\begin{array}{ll}
\widetilde{F}_3(z,0,0,0)=\sum_{|\iota|=3}
F_{\iota_1\iota_2\iota_3\iota_4}\xi_{n_1}^{\iota_1+\iota_2}(x)
\xi_{n_2}^{\iota_3+\iota_4}(x)z_1^{\iota_1}{z}_2^{\iota_2}
z_3^{\iota_3}{z}_4^{\iota_4},
\end{array}
\end{equation*}
where $\iota=(\iota_1,\iota_2, \iota_3,\iota_4)\in\mathbb{N}_0^4$, $|\iota|=\sum_{j=1}^4\iota_j$, $F_{\iota_1\iota_2\iota_3\iota_4}$ is the coefficient vector of
$z_1^{\iota_1}{z}_2^{\iota_2}
z_3^{\iota_3}{z}_4^{\iota_4}$.
Then we have
\begin{equation*}
    \begin{array}{ll}
    f_3^1(z,0,0,0)&=\bar{\Phi}\left(\begin{array}{l}
    \big\langle\beta_{n_1},\widetilde{F}_3(z,0,0,0)\big\rangle\\
    \big\langle\beta_{n_2},\widetilde{F}_3(z,0,0,0)\big\rangle
    \end{array}\right)\\
    &=\bar{\Phi}\left(\begin{array}{l}
    \sum_{|\iota|=3}
    F_{\iota_1\iota_2\iota_3\iota_4}
    \int_{0}^{\ell\pi}\xi_{n_1}^{\iota_1+\iota_2+1}
    \xi_{n_2}^{\iota_3+\iota_4}\mathrm{d}x
    z_1^{\iota_1}{z}_2^{\iota_2}z_3^{\iota_3}{z}_4^{\iota_4}\\
   \sum_{|\iota|=3}
    F_{\iota_1\iota_2\iota_3\iota_4}
    \int_{0}^{\ell\pi}\xi_{n_1}^{\iota_1+\iota_2}
    \xi_{n_2}^{\iota_3+\iota_4+1}\mathrm{d}xz_1^{\iota_1}{z}_2^{\iota_2}
    z_3^{\iota_3}{z}_4^{\iota_4}
    \end{array}\right).
    \end{array}
\end{equation*}
Thus,
\begin{equation}\label{normal_forms_C}
\frac{1}{3!}\text{Proj}_{\mathrm{Ker}(M_3^1)}f_3^1(z,0,0,0)
=\left(\begin{array}{ll}
C_{2100}z_1^2z_2+C_{1011}z_1z_3z_4\\
\overline{C_{2100}}z_1z_2^2+\overline{C_{1011}}z_2z_3z_4\\
C_{0021}z_3^2z_4+C_{1110}z_1z_2z_3\\
\overline{C_{0021}}z_3z_4^2+\overline{C_{1110}}z_1z_2z4\\
\end{array}\right),
\end{equation}
where
\begin{equation*}
\begin{array}{ll}
C_{2100}=\frac{1}{6}\bar{\psi}_1F_{2100}\gamma_{40},~
C_{1011}=\frac{1}{6}\bar{\psi}_1F_{1011}\gamma_{22},~
C_{0021}=\frac{1}{6}\bar{\psi}_3F_{0021}\gamma_{04},~
C_{1110}=\frac{1}{6}\bar{\psi}_3F_{1110}\gamma_{22},~\\
\end{array}
\end{equation*}
with $\gamma_{ij}=\int_0^{\ell\pi}\xi_{n_1}^i(x)\xi_{n_2}^j(x) \text{d}x$, and
\begin{equation*}
	\begin{array}{ll}
	\displaystyle~\qquad\int_{0}^{\ell\pi}\xi_{n_k}^4(x)\mathrm{d}x=
	\begin{cases}
    \cfrac{1}{\ell\pi},&n_k=0,
	\vspace{0.1cm}\\
	\cfrac{3}{2\ell\pi},&n_k\neq 0,\\
	\end{cases}~~~~
	\displaystyle\int_{0}^{\ell\pi}\xi_{n_1}^2(x)\xi_{n_2}^2(x)\mathrm{d}x=
	\displaystyle\begin{cases}
	\cfrac{3}{2\ell\pi},&n_1=n_2\neq 0,
	\vspace{0.1cm}\\
	\cfrac{1}{\ell\pi},& otherwise.\\
	\end{cases}
	\end{array}
\end{equation*}

\paragraph{(b) The computation of $\mathrm{Proj}_{\mathrm{Ker}(M_3^1)}D_zf_2^1(z,0,0,0)U_2^1(z,0)$}.

From section \ref{normal_forma_2}, we know that $F_2(U,\widehat{U},\alpha)=F_2(U,\widehat{U},0)$. Moreover, by \eqref{U_decompose}, we have
\begin{equation}\label{hat_F_2_z_w}
    \begin{array}{ll}
    F_2(z,w,\widehat{w},0)&=F_2(U,\widehat{U},0)\\
    &=\displaystyle\sum_{|\iota|=2}F_{\iota_1\iota_2\iota_3\iota_4}
    \xi_{n_1}^{\iota_1+\iota_2}
    \xi_{n_2}^{\iota_3+\iota_4}z_1^{\iota_1}{z}_2^{\iota_2}
    z_3^{\iota_3}{z}_4^{\iota_4}+S_2(w)+S_2(\widehat{w})
    +o(|w|^2,|w\widehat{w}|,\widehat{w}^2),
    \end{array}
\end{equation}
where $S_2(w), S_2(\widehat{w})$ represent the linear terms of $w$ and $\widehat{w}$, respectively.

From \eqref{f_2^1_z_0_alpha} and \eqref{hat_F_2_z_w}, we have
\begin{equation}\label{f_2^1_z_0_0_0}
    \begin{array}{ll}
    f_2^1(z,0,0,0)&=\bar{\Psi}\left(\begin{array}{l}
    \big\langle \beta_{n_1},F_2(z,0,0,0)\big\rangle\\
    \big\langle \beta_{n_2},F_2(z,0,0,0)\big\rangle
    \end{array}\right)\\
    &=\bar{\Psi}\left(\begin{array}{l}
    \sum_{|\iota|=2}F_{\iota_1\iota_2\iota_3\iota_4}
    \int_{0}^{\ell\pi}\xi_{n_1}^{\iota_1+\iota_2+1}
    \xi_{n_2}^{\iota_3+\iota_4}\mathrm{d}x
    z_1^{\iota_1}{z}_2^{\iota_2}
    z_3^{\iota_3}{z}_4^{\iota_4}\\
    \sum_{|\iota|=2}F_{\iota_1\iota_2\iota_3\iota_4}
    \int_{0}^{\ell\pi}\xi_{n_1}^{\iota_1+\iota_2+1}
    \xi_{n_2}^{\iota_3+\iota_4}
    \mathrm{d}xz_1^{\iota_1}{z}_2^{\iota_2}
    z_3^{\iota_3}{z}_4^{\iota_4}
    \end{array}\right).
    \end{array}
\end{equation}
Denote $f_2^1(z,0,0,0)=(f_2^{1(1)},f_2^{1(2)},f_2^{1(3)},f_2^{1(4)})$, and we have
\begin{equation}\label{f_2^1_1234}
	\begin{array}{ll}
f_2^1(1)=f_{2000}^{1(1)}z_1^2&+f_{1100}^{1(1)}z_2z_2+f_{1010}^{1(1)}z_1z_3
	+f_{1001}^{1(1)}z_1z_4+f_{0200}^{1(1)}z_2^2
	 \vspace{0.2cm} \\
	&+f_{0110}^{1(1)}z_2z_3
	+f_{0101}^{1(1)}z_2z_4+f_{0020}^{1(1)}z_3^2+f_{0011}^{1(1)}z_3z_4
	+f_{0002}^{1(1)}z_4^2,\vspace{0.2cm}\\
f_2^1(2)=f_{2000}^{1(2)}z_1^2&+f_{1100}^{1(2)}z_2z_2+f_{1010}^{1(2)}z_1z_3
	+f_{1001}^{1(2)}z_1z_4+f_{0200}^{1(2)}z_2^2
	\vspace{0.2cm}\\
	&+f_{0110}^{1(2)}z_2z_3
	+f_{0101}^{1(2)}z_2z_4+f_{0020}^{1(2)}z_3^2+f_{0011}^{1(2)}z_3z_4
	+f_{0002}^{1(2)}z_4^2,\vspace{0.2cm}\\
f_2^1(3)=f_{2000}^{1(3)}z_1^2&+f_{1100}^{1(3)}z_2z_2+f_{1010}^{1(3)}z_1z_3
	+f_{1001}^{1(3)}z_1z_4+f_{0200}^{1(3)}z_2^2
	\vspace{0.2cm}\\
	&+f_{0110}^{1(3)}z_2z_3
	+f_{0101}^{1(3)}z_2z_4+f_{0020}^{1(3)}z_3^2+f_{0011}^{1(3)}z_3z_4
	+f_{0002}^{1(3)}z_4^2,\vspace{0.2cm}\\
f_2^1(4)=f_{2000}^{1(4)}z_1^2&+f_{1100}^{1(4)}z_2z_2+f_{1010}^{1(4)}z_1z_3
	+f_{1001}^{1(4)}z_1z_4+f_{0200}^{1(4)}z_2^2
	\vspace{0.2cm}\\
	&+f_{0110}^{1(4)}z_2z_3
	+f_{0101}^{1(4)}z_2z_4+f_{0020}^{1(4)}z_3^2+f_{0011}^{1(4)}z_3z_4
	+f_{0002}^{1(4)}z_4^2,\vspace{0.2cm}\\
	\end{array}
\end{equation}
where
\begin{equation*}
    \begin{array}{ll}
        f_{2000}^{1(1)}=\bar{\psi}_1F_{2000}\gamma_{30},~
    f_{2000}^{1(2)}=\bar{\psi}_2F_{2000}\gamma_{30},~
    f_{2000}^{1(3)}=\bar{\psi}_3F_{2000}\gamma_{21},~
    f_{2000}^{1(4)}=\bar{\psi}_4F_{2000}\gamma_{21},~\vspace{0.1cm}\\
        f_{1100}^{1(1)}=\bar{\psi}_1F_{1100}\gamma_{30},~
    f_{1100}^{1(2)}=\bar{\psi}_2F_{1100}\gamma_{30},~
    f_{1100}^{1(3)}=\bar{\psi}_3F_{1100}\gamma_{21},~
    f_{1100}^{1(4)}=\bar{\psi}_4F_{1100}\gamma_{21},~\vspace{0.1cm}\\
        f_{1010}^{1(1)}=\bar{\psi}_1F_{1010}\gamma_{21},~
    f_{1010}^{1(2)}=\bar{\psi}_2F_{1010}\gamma_{21},~
    f_{1010}^{1(3)}=\bar{\psi}_3F_{1010}\gamma_{12},~
    f_{1010}^{1(4)}=\bar{\psi}_4F_{1010}\gamma_{12},~\vspace{0.2cm}\\
        f_{1001}^{1(1)}=\bar{\psi}_1F_{1001}\gamma_{21},~
    f_{1001}^{1(2)}=\bar{\psi}_2F_{1001}\gamma_{21},~
    f_{1001}^{1(3)}=\bar{\psi}_3F_{1001}\gamma_{12},~
    f_{1001}^{1(4)}=\bar{\psi}_4F_{1001}\gamma_{12},~\vspace{0.2cm}\\
        f_{0200}^{1(1)}=\bar{\psi}_1F_{0200}\gamma_{30},~
    f_{0200}^{1(2)}=\bar{\psi}_2F_{0200}\gamma_{30},~
    f_{0200}^{1(3)}=\bar{\psi}_3F_{0200}\gamma_{21},~
    f_{0200}^{1(4)}=\bar{\psi}_4F_{0200}\gamma_{21},~\vspace{0.2cm}\\
        f_{0110}^{1(1)}=\bar{\psi}_1F_{0110}\gamma_{21},~
    f_{0110}^{1(2)}=\bar{\psi}_2F_{0110}\gamma_{21},~
    f_{0110}^{1(3)}=\bar{\psi}_3F_{0110}\gamma_{12},~
    f_{0110}^{1(4)}=\bar{\psi}_4F_{0110}\gamma_{12},~\vspace{0.2cm}\\
        f_{0101}^{1(1)}=\bar{\psi}_1F_{0101}\gamma_{21},~
    f_{0101}^{1(2)}=\bar{\psi}_2F_{0101}\gamma_{21},~
    f_{0101}^{1(3)}=\bar{\psi}_3F_{0101}\gamma_{12},~
    f_{0101}^{1(4)}=\bar{\psi}_4F_{0101}\gamma_{12},~\vspace{0.2cm}\\
        f_{0020}^{1(1)}=\bar{\psi}_1F_{0020}\gamma_{12},~
    f_{0020}^{1(2)}=\bar{\psi}_2F_{0020}\gamma_{12},~
    f_{0020}^{1(3)}=\bar{\psi}_3F_{0020}\gamma_{03},~
    f_{0020}^{1(4)}=\bar{\psi}_4F_{0020}\gamma_{03},~\vspace{0.2cm}\\
        f_{0011}^{1(1)}=\bar{\psi}_1F_{0011}\gamma_{12},~
    f_{0011}^{1(2)}=\bar{\psi}_2F_{0011}\gamma_{12},~
    f_{0011}^{1(3)}=\bar{\psi}_3F_{0011}\gamma_{03},~
    f_{0011}^{1(4)}=\bar{\psi}_4F_{0011}\gamma_{03},~\vspace{0.2cm}\\
        f_{0002}^{1(1)}=\bar{\psi}_1F_{0002}\gamma_{12},~
    f_{0002}^{1(2)}=\bar{\psi}_2F_{0002}\gamma_{12},~
    f_{0002}^{1(3)}=\bar{\psi}_3F_{0002}\gamma_{03},~
    f_{0002}^{1(4)}=\bar{\psi}_4F_{0002}\gamma_{03},~\vspace{0.2cm}\\
    \end{array}
\end{equation*}
with $\gamma_{ij}=\int_0^{\ell\pi}\xi_{n_1}^i(x)\xi_{n_2}^j(x) \text{d}x$, and
\begin{equation*}
\begin{array}{ll}
\displaystyle\int_{0}^{\ell\pi}\xi_{n_1}(x)\xi_{n_2}^2(x)\mathrm{d}x&=
\displaystyle\begin{cases}
\cfrac{1}{\sqrt{\ell\pi}},&n_1=0,
\vspace{0.1cm}\\
~~~0,& otherwise,\\
\end{cases}\\
\displaystyle\int_{0}^{\ell\pi}\xi_{n_1}^2(x)\xi_{n_2}(x)\mathrm{d}x&=
\begin{cases}
\cfrac{1}{\sqrt{\ell\pi}},&n_2=n_1=0,
\vspace{0.1cm}\\
\cfrac{1}{\sqrt{2\ell\pi}},&n_2=2n_1\neq 0,
\vspace{0.1cm}\\
~~~0,& otherwise.\\
\end{cases}\\
\end{array}
\end{equation*}

Let $U_2^1(z,0)=(U_2^{1(1)}, U_2^{1(2)}, U_2^{1(3)}, U_2^{1(4)})$, and then by
\eqref{U_j}, \eqref{M_j^1(z,lapha)}, \eqref{f_2^1_1234} and \eqref{f_2^1_z_0_0_0}, we have
\begin{equation*}
    \begin{array}{ll}
 U_2^{1(1)}=&
     \cfrac{1}{i\omega_1}f_{2000}^{1(1)}z_1^2
    -\cfrac{1}{i\omega_1}f_{1100}^{1(1)}z_2z_2
    +\cfrac{1}{i\omega_2}f_{1010}^{1(1)}z_1z_3
    -\cfrac{1}{i\omega_2}f_{1001}^{1(1)}z_1z_4
    -\cfrac{1}{3i\omega_1}f_{0200}^{1(1)}z_2^2
    -\cfrac{1}{2i\omega_1-i\omega_2}f_{0110}^{1(1)}z_2z_3\\
   &-\cfrac{1}{2i\omega_1+i\omega_2}f_{0101}^{1(1)}z_2z_4
    -\cfrac{1}{i\omega_1-2i\omega_2}f_{0020}^{1(1)}z_3^2
    -\cfrac{1}{i\omega_1}f_{0011}^{1(1)}z_3z_4
    -\cfrac{1}{i\omega_1+2i\omega_2}f_{0002}^{1(1)}z_4^2,\\
 U_2^{1(2)}=&
    \cfrac{1}{3i\omega_1}f_{2000}^{1(2)}z_1^2
    +\cfrac{1}{i\omega_1}f_{1100}^{1(2)}z_1z_2
    +\cfrac{1}{2i\omega_1+i\omega_2}f_{1010}^{1(2)}z_1z_3
    +\cfrac{1}{2i\omega_1-i\omega_2}f_{1001}^{1(2)}z_1z_4
    -\cfrac{1}{i\omega_1}f_{0200}^{1(2)}z_2^2\\
    &+\cfrac{1}{i\omega_2}f_{0110}^{1(2)}z_2z_3
    -\cfrac{1}{i\omega_2}f_{0101}^{1(2)}z_2z_4
    +\cfrac{1}{i\omega_1+2i\omega_2}f_{0020}^{1(2)}z_3^2
    +\cfrac{1}{i\omega_1}f_{0011}^{1(2)}z_3z_4
    +\cfrac{1}{i\omega_1-2i\omega_2}f_{0002}^{1(2)}z_4^2,\\
 U_2^{1(3)}=&
    \cfrac{1}{2i\omega_1-i\omega_2}f_{2000}^{1(3)}z_1^2
    -\cfrac{1}{i\omega_2}f_{1100}^{1(3)}z_1z_2
    +\cfrac{1}{i\omega_1}f_{1010}^{1(3)}z_1z_3
    +\cfrac{1}{i\omega_1-2i\omega_2}f_{1001}^{1(3)}z_1z_4
    -\cfrac{1}{2i\omega_1+i\omega_2}f_{0200}^{1(3)}z_2^2\\
    &-\cfrac{1}{i\omega_1}f_{0110}^{1(3)}z_2z_3
    -\cfrac{1}{i\omega_1+2i\omega_2}f_{0101}^{1(3)}z_2z_4
    +\cfrac{1}{i\omega_2}f_{0020}^{1(3)}z_3^2
    -\cfrac{1}{i\omega_2}f_{0011}^{1(3)}z_3z_4
    -\cfrac{1}{3i\omega_2}f_{0002}^{1(3)}z_4^2,\\
 U_2^{1(4)}=&
    \cfrac{1}{2i\omega_1+i\omega_2}f_{2000}^{1(4)}z_1^2
    +\cfrac{1}{i\omega_2}f_{1100}^{1(4)}z_1z_2
    +\cfrac{1}{i\omega_1+2i\omega_2}f_{1010}^{1(4)}z_1z_3
    +\cfrac{1}{i\omega_1}f_{1001}^{1(4)}z_1z_4
    -\cfrac{1}{2i\omega_1-i\omega_2}f_{0200}^{1(4)}z_2^2\\
    &-\cfrac{1}{i\omega_1-2i\omega_2}f_{0110}^{1(4)}z_2z_3
    -\cfrac{1}{i\omega_1}f_{0101}^{1(4)}z_2z_4
    +\cfrac{1}{3i\omega_2}f_{0020}^{1(4)}z_3^2
    +\cfrac{1}{i\omega_2}f_{0011}^{1(4)}z_3z_4
    -\cfrac{1}{i\omega_2}f_{0002}^{1(4)}z_4^2.\\
    \end{array}
\end{equation*}
Therefore,
\begin{equation}\label{D_zf21_U21}
	\dfrac{1}{3!}\mathrm{Proj}_{\mathrm{Ker}(M_3^1)}(D_zf_2^1(z,0,0,0)U_2^1(z,0))
	=\left(\begin{array}{cc}
	D_{2100}z_1^2z_2+D_{1011}z_1z_3z_4\\
	\overline{D_{2100}}z_1z_2^2+\overline{D_{1011}}z_2z_3z_4\\
	D_{0021}z_3^2z_4+D_{1110}z_1z_2z_3\\
	\overline{D_{0021}}z_3z_4^2+\overline{D_{1110}}z_1z_2z4\\
	\end{array}\right),
\end{equation}
where
\begin{equation*}
	\begin{array}{ll}
 D_{2100}=&\dfrac{1}{6}\Big(
	-\cfrac{1}{i\omega_1}f_{2000}^{1(1)}f_{1100}^{1(1)}
	+\cfrac{1}{i\omega_1}f_{1100}^{1(1)}f_{1100}^{1(2)}
	+\cfrac{2}{3i\omega_1}f_{0200}^{1(1)}f_{2000}^{1(2)}
	-\cfrac{1}{i\omega_2}f_{1010}^{1(1)}f_{1100}^{1(3)}\\
	&+\cfrac{1}{2i\omega_1-i\omega_2}f_{0110}^{1(1)}f_{2000}^{1(3)}
	+\cfrac{1}{i\omega_2}f_{1001}^{1(1)}f_{1100}^{1(4)}
	+\cfrac{1}{2i\omega_1+i\omega_2}f_{0101}^{1(1)}f_{2000}^{1(4)}\Big),\\
 D_{1011}=&\dfrac{1}{6}\Big(
	-\cfrac{2}{i\omega_1}f_{2000}^{1(1)}f_{0011}^{1(1)}
	+\cfrac{1}{i\omega_1}f_{1100}^{1(1)}f_{0011}^{1(2)}
	+\cfrac{1}{2i\omega_1-i\omega_2}f_{0110}^{1(1)}f_{1001}^{1(2)}
	+\cfrac{1}{2i\omega_1+i\omega_2}f_{0101}^{1(1)}f_{1010}^{1(2)}
	-\cfrac{1}{i\omega_2}f_{1010}^{1(1)}f_{0011}^{1(3)}\\
	&+\cfrac{2}{i\omega_1-2i\omega_2}f_{0020}^{1(1)}f_{1001}^{1(3)}
	+\cfrac{1}{i\omega_1}f_{0011}^{1(1)}f_{1010}^{1(3)}
	+\cfrac{1}{i\omega_2}f_{1001}^{1(1)}f_{0011}^{1(4)}
	+\cfrac{1}{i\omega_1}f_{0011}^{1(1)}f_{1001}^{1(4)}
	+\cfrac{2}{i\omega_1+2i\omega_2}f_{0002}^{1(1)}f_{1010}^{1(4)}\Big),\\
 D_{0021}=&\dfrac{1}{6}\Big(
	-\cfrac{1}{i\omega_1}f_{1010}^{1(3)}f_{0011}^{1(1)}
	-\cfrac{1}{i\omega_1-2i\omega_2}f_{1001}^{1(3)}f_{0020}^{1(1)}
	+\cfrac{1}{i\omega_1}f_{0110}^{1(3)}f_{0011}^{1(2)}
	+\cfrac{1}{i\omega_1+2i\omega_2}f_{0101}^{1(3)}f_{0020}^{1(2)}\\
	&-\cfrac{1}{i\omega_2}f_{0020}^{1(3)}f_{0011}^{1(3)}
	+\cfrac{1}{i\omega_2}f_{0011}^{1(3)}f_{0011}^{1(4)}
	+\cfrac{2}{3i\omega_2}f_{0002}^{1(3)}f_{0020}^{1(4)}\Big),\\
 D_{1110}=&\dfrac{1}{6}\Big(
	-\cfrac{2}{2i\omega_1-i\omega_2}f_{2000}^{1(3)}f_{0110}^{1(1)}
	+\cfrac{1}{i\omega_2}f_{1100}^{1(3)}f_{1010}^{1(1)}
	-\cfrac{1}{i\omega_1}f_{1010}^{1(3)}f_{1100}^{1(1)}
	+\cfrac{1}{i\omega_2}f_{1100}^{1(3)}f_{0110}^{1(2)}
	+\cfrac{2}{2i\omega_1+i\omega_2}f_{0200}^{1(3)}f_{1010}^{1(2)}\\
	&+\cfrac{1}{i\omega_1}f_{0110}^{1(3)}f_{1100}^{1(2)}
	-\cfrac{2}{i\omega_2}f_{0020}^{1(3)}f_{1100}^{1(3)}
	-\cfrac{1}{i\omega_1-2i\omega_2}f_{1001}^{1(3)}f_{0110}^{1(4)}
	+\cfrac{1}{i\omega_1+2i\omega_2}f_{0101}^{1(3)}f_{1010}^{1(4)}
	+\cfrac{1}{i\omega_2}f_{0011}^{1(3)}f_{1100}^{1(4)}\Big).\\
	\end{array}
\end{equation*}

\paragraph{(c) The computation of $\mathrm{Proj}_{\mathrm{Ker}(M_3^1)}D_wf_2^1(z,0,0,0)U_2^2(z,0)$ and $\mathrm{Proj}_{\mathrm{Ker}(M_3^1)}D_{\widehat w}f_2^1(z,0,0,0)\widehat U_2^2(z,0)$}.

Recalling from \eqref{tilde_F_2_U} that $\widetilde{F}_2(z,w,\widehat{w},0)
=F_2(z,w,\widehat{w},0)$ and by virtue of \eqref{hat_F_2_z_w}, we have
\begin{equation}\label{tilde_F_2_S_wz}
    \begin{array}{ll}
    \widetilde{F}_2(z,w,\widehat{w},0)&=
    \displaystyle\sum_{|\iota|=2}F_{\iota_1\iota_2\iota_3\iota_4}
    \xi_{n_1}^{\iota_1+\iota_2}
    \xi_{n_2}^{\iota_3+\iota_4}z_1^{\iota_1}{z}_2^{\iota_2}
    z_3^{\iota_3}{z}_4^{\iota_4}+S_2(w)+S_2(\widehat{w})
    +o(|w|^2,|w\widehat{w}|,\widehat{w}^2)\\
    &=S_2(w)+S_2(\widehat{w})
    +o(|w|^2,|w\widehat{w}|,\widehat{w}^2,z^2)\\
    &=S_{wz}(w)z^x+S_{\widehat{w}z}(\widehat{w})z^x
    +o(|w|^2,|w\widehat{w}|,\widehat{w}^2,z^2),\\
    \end{array}
\end{equation}
where $z^x=(z_1\xi_{n_1}, z_2\xi_{n_1},
z_3\xi_{n_2},z_4\xi_{n_2})^T$, and
\begin{equation}\label{S_wz}
    \begin{array}{cc}
    S_{wz}(w)=(S_{wz_1}(w), S_{wz_2}(w),
    S_{wz_3}(w), S_{wz_4}(w)),\\
    S_{\widehat{w}z}(\widehat{w})=(S_{\widehat{w}z_1}(\widehat{w}), S_{\widehat{w}z_2}(\widehat{w}),
    S_{\widehat{w}z_3}(\widehat{w}), S_{\widehat{w}z_4}(\widehat{w})),
    \end{array}
\end{equation}
with $S_{wz_i}$ and $S_{\widehat{w}z_i}$ are linear operators from
$\mathrm{Ker}\pi$ to $X_{\mathbb{C}}$, defined by
\begin{equation}\label{S_wz_i}
	\begin{array}{cc}
	S_{wz_i}(y_1)=F_{w_1z_i}y_1^{(1)}+F_{w_2z_i}y_1^{(2)},~i=1,2,3,4,\\
	S_{\widehat{w}z_i}(y_2)=F_{\widehat{w}_1z_i}y_2^{(1)}
	                     +F_{\widehat{w}_2z_i}y_2^{(2)},~i=1,2,3,4.\\
	\end{array}
\end{equation}
Here, $F_{w_1z_i}, F_{w_2z_i}$, $F_{\widehat{w}_1z_i}, F_{\widehat{w}_2z_i}$
are coefficient vectors.
By \eqref{tilde_F_2_S_wz}, we can easily obtain
\begin{equation*}
	\begin{array}{ll}
	D_w\widetilde{F}(z,0,0,0)(y_1)=S_{wz_1}(y_1)z_1\xi_{n_1}
	+S_{wz_2}(y_1)z_2\xi_{n_1}+S_{wz_3}(y_1)z_3\xi_{n_2}
	+S_{wz_4}(y_1)z_4\xi_{n_2},\\
	D_{\widehat{w}}\widetilde{F}(z,0,0,0)(y_2)=S_{\widehat{w}z_1}(y_2)z_1\xi_{n_1}
	+S_{\widehat{w}z_2}(y_2)z_2\xi_{n_1}
	+S_{\widehat{w}z_3}(y_2)z_3\xi_{n_2}
	+S_{\widehat{w}z_4}(y_2)z_4\xi_{n_2}.\\
	\end{array}
\end{equation*}
Let $U_2^2(z,0)=\sum_{j\geq 0}h_j(z)\xi_j(x)$ with
\begin{equation*}
	h_j(z)=\left(\begin{array}{cc}
	h_j^{(1)}(z)\\
	h_j^{(2)}(z)\\
	\vdots\\
	h_j^{(n)}(z)
	\end{array}\right)
	=\sum_{|\iota|=2}\left(
	\begin{array}{cc}
	h_{j,\iota_1\iota_2\iota_3\iota_4}^{(1)}\\
	h_{j,\iota_1\iota_2\iota_3\iota_4}^{(2)}\\
	\vdots\\
	h_{j,\iota_1\iota_2\iota_3\iota_4}^{(n)}
	\end{array}
	\right)z_1^{\iota_1}z_2^{\iota_2}z_3^{\iota_3}z_4^{\iota_4}.
\end{equation*}
Then we have
\begin{equation*}
    \begin{array}{ll}
    &D_wf_2^1(z,0,0,0)U_2^2(z,0)=\bar{\Psi}\left(
    \begin{array}{ll}
    \big\langle \beta_{n_1},~D_w\widetilde{F}_2(z,0,0,0)U_2^2(z,0)\big\rangle\\
    \big\langle \beta_{n_2},~D_w\widetilde{F}_2(z,0,0,0)U_2^2(z,0)\big\rangle
    \end{array}\right)\\
    &=\bar{\Psi}\left(
    \begin{array}{ll}
  \sum_{j\geq 0}S_{wz_1}(h_j)\gamma_{jn_1n_1}z_1
     + \sum_{j\geq 0}S_{wz_2}(h_j)\gamma_{jn_1n_1}z_2
    &+ \sum_{j\geq 0}S_{wz_3}(h_j)\gamma_{jn_1n_2}z_3\\
    &+ \sum_{j\geq 0}S_{wz_4}(h_j)\gamma_{jn_1n_2}z_4\\
  \sum_{j\geq 0}S_{wz_1}(h_j)\gamma_{jn_1n_2}z_1
     + \sum_{j\geq 0}S_{wz_2}(h_j)\gamma_{jn_1n_2}z_2
    &+ \sum_{j\geq 0}S_{wz_3}(h_j)\gamma_{jn_2n_2}z_3\\
    &+ \sum_{j\geq 0}S_{wz_4}(h_j)\gamma_{jn_2n_2}z_4
    \end{array}\right),\
    \end{array}
\end{equation*}
and
\begin{equation*}
  \begin{array}{ll}
   &D_{\widehat{w}}f_2^1(z,0,0,0)\widehat{U}_2^2(z,0)=\bar{\Psi}\left(
   \begin{array}{ll}
   \big\langle \beta_{n_1},~D_{\widehat{w}}\widetilde{F}_2(z,0,0,0)\widehat{U}_2^2(z,0)
   \big\rangle\\
   \big\langle\beta_{n_2},~ D_{\widehat{w}}\widetilde{F}_2(z,0,0,0)\widehat{U}_2^2(z,0)
   \big\rangle
   \end{array}\right)\\
   &=\bar{\Psi}\left(
   \begin{array}{ll}
   S_{\widehat{w}z_1}(h_0)\gamma_{0n_1n_1}z_1
   + S_{\widehat{w}z_2}(h_0)\gamma_{0n_1n_1}z_2
   + S_{\widehat{w}z_3}(h_0)\gamma_{0n_1n_2}z_3
   + S_{\widehat{w}z_4}(h_0)\gamma_{0n_1n_2}z_4\\
 S_{\widehat{w}z_1}(h_0)\gamma_{0n_1n_2}z_1
   + S_{\widehat{w}z_2}(h_0)\gamma_{0n_1n_2}z_2
   + S_{\widehat{w}z_3}(h_0)\gamma_{0n_2n_2}z_3
   + S_{\widehat{w}z_4}(h_0)\gamma_{0n_2n_2}z_4
  \end{array}\right),\\
  \end{array}
\end{equation*}
where $\gamma_{ijk}=\int_{0}^{\ell\pi}\xi_i(x)\xi_j(x)\xi_k(x)\mathrm{d}x$.
According to \eqref{Ker_M_31}, we only need to calculate the following types of
$h_j$ for some $j\in\mathbb{N}_0$:
$$h_{j,2000},~ h_{j,1100}, ~h_{j,0011}, ~h_{j,1010},
 ~h_{j,1001},~ h_{j,0020}, ~h_{j,0110},$$
and the following discussion is divided into three cases:
$$ \mathrm{I}: n_1=n_2=0,~~~\mathrm{II}: n_1=0, n_2\neq0,~~~\mathrm{III}: n_1\neq 0, n_2\neq 0.$$

\noindent $\mathbf{Case~I:} ~n_1=n_2= 0.$

Clearly,
\begin{equation*}
    \gamma_{jn_1n_2}=\begin{cases}
    \cfrac{1}{\sqrt{\ell\pi}},&j=0,\\
    ~~~0,&j\neq 0,\\
    \end{cases}
\end{equation*}
Then
\begin{equation*}
    \begin{array}{ll}
    &D_wf_2^1(z,0,0,0)U_2^2(z,0)\\
    &=\dfrac{1}{\sqrt{\ell\pi}}\bar{\Psi}\left(
    \begin{array}{cc}
    S_{wz_1}(h_0)z_1 + S_{wz_2}(h_0)z_2
    + S_{wz_3}(h_{0})z_3 + S_{wz_4}(h_{0})z_4\\
    S_{wz_1}(h_{0})z_1 + S_{wz_2}(h_{0})z_2
    + S_{wz_3}(h_0)z_3 + S_{wz_4}(h_0)z_4
    \end{array}\right),\\
    \end{array}
\end{equation*}
and
\begin{equation*}
    \begin{array}{ll}
    D{\widehat{w}}f_2^1(z,0,0,0)\widehat{U}_2^2(z,0)=
    \cfrac{1}{\sqrt{\ell\pi}}\bar{\Psi}\left(
    \begin{array}{ll}
    S_{\widehat{w}z_1}(h_0)z_1
    + S_{\widehat{w}z_2}(h_0)z_2+S_{\widehat{w}z_3}(h_0)z_3
    + S_{\widehat{w}z_4}(h_0)z_4\\
    S_{\widehat{w}z_1}(h_0)z_1
    + S_{\widehat{w}z_2}(h_0)z_2
    +S_{\widehat{w}z_3}(h_0)z_3
    + S_{\widehat{w}z_4}(h_0)z_4
    \end{array}\right).\\
    \end{array}
\end{equation*}
Therefore, we have
\begin{equation*}
\begin{array}{ll}
&\dfrac{1}{3!}\mathrm{Proj}_{\mathrm{Ker}(M_3^1)}
\left(D_wf_2^1(z,0,0,0)U_2^2(z,0)+
D_{\widehat w}f_2^1(z,0,0,0)\widehat U_2^2(z,0)\right)\\
&\quad=\left(\begin{array}{ll}
\big(E_{2100}+\widehat E_{2100}\big)z_1^2z_2+\big(E_{1011}+\widehat E_{1011}\big)z_1z_3z_4\\
\big(E_{1200}+\widehat E_{1200}\big)z_1z_2^2+\big(E_{0111}+\widehat E_{0111}\big)z_2z_3z_4\\
\big(E_{0021}+\widehat E_{0021}\big)z_3^2z_4+\big(E_{1110}+\widehat E_{1110}\big)z_1z_2z_3\\
\big(E_{0012}+\widehat E_{0012}\big)z_3z_4^2+\big(E_{1101}+\widehat E_{1101}\big)z_1z_2z_4\\
\end{array}\right),
\end{array}
\end{equation*}
where
\begin{equation*}
   \begin{array}{ll}
   E_{2100}=\cfrac{1}{6\sqrt{\ell\pi}}\bar{\psi}_1
   \Big(S_{wz_1}(h_{0,1100})
   +S_{wz_2}(h_{0,2000})\Big),\\
   E_{1011}=\cfrac{1}{6\sqrt{\ell\pi}}\bar{\psi}_1
   \Big(S_{wz_1}(h_{0,0011})
   +S_{wz_3}(h_{0,1001})+S_{wz_4}(h_{0,1010})\Big),\\
   E_{0021}=\cfrac{1}{6\sqrt{\ell\pi}}\bar{\psi}_3\Big(
   S_{wz_3}(h_{0,0011})
   +S_{wz_4}(h_{0,0020}\big)\Big),\\
   E_{1110}=\cfrac{1}{6\sqrt{\ell\pi}}\bar{\psi}_3\Big(
   S_{wz_1}(h_{0,0110})
   +S_{wz_2}(h_{0,1010})+S_{wz_3}(h_{0,1100})
   \Big),\\
   \widehat E_{2100}=\cfrac{1}{6\sqrt{\ell\pi}}\bar{\psi}_1\Big(S_{\widehat wz_1}(h_{0,1100})
   +S_{\widehat wz_2}(h_{0,2000})\Big),~~\\
   \widehat E_{1011}=\cfrac{1}{6\sqrt{\ell\pi}}\bar{\psi}_1 \Big(
   S_{\widehat wz_1}(h_{0,0011})+S_{\widehat wz_3}(h_{0,1001})
   +S_{\widehat wz_4}(h_{0,1010})\Big),\\
   \widehat E_{0021}=\cfrac{1}{6\sqrt{\ell\pi}}\bar{\psi}_3\Big(
   S_{\widehat wz_3}(h_{0,0011})
   +S_{\widehat wz_4}(h_{0,0020})\Big),~~\\
   \widehat E_{1110}=\cfrac{1}{6\sqrt{\ell\pi}}\bar{\psi}_3
   \Big(S_{\widehat wz_1}(h_{0,0110})
   +S_{\widehat wz_2}(h_{0,1010})
   +S_{\widehat wz_3}(h_{0,1100})
   \Big),\\
   E_{1200}=\overline{E_{2100}},~E_{0111}=\overline{E_{1011}},~
   E_{0012}=\overline{E_{0021}},~E_{1101}=\overline{E_{1110}},~\\
   \widehat{E}_{1200}=\overline{\widehat{E}_{2100}},
   ~\widehat{E}_{0111}=\overline{\widehat{E}_{1011}},~
   \widehat{E}_{0012}=\overline{\widehat{E}_{0021}},
   ~\widehat{E}_{1101}=\overline{\widehat{E}_{1110}}.~
   \end{array}
\end{equation*}

Now, we compute the $h_{j,\iota_1\iota_2\iota_3\iota_4}$. From \eqref{M_j_operator}, we have
\begin{equation*}
\begin{array}{ll}
M_2^2U_2^2(z,0)=D_z\left(\sum_{j\geq 0}h_j(z)\xi_j(x)\right)Bz
-\mathscr{L}_1\left(\sum_{j\geq 0}h_j(z)\xi_j(x)\right),
\end{array}
\end{equation*}
which leads to
\begin{equation}\label{M_2^2_beat_n_k}
\begin{array}{ll}
&~~~\left\langle \beta_k,~M_2^2\left(\sum_{j\geq 0}h_j(z)\xi_j(x)\right)
\right\rangle\\
&=\left\langle \beta_k,~M_2^2\Big(h_k(z)\xi_k(x)\Big)
\right\rangle\\
&=\left\langle \beta_k,~M_2^2\left(\sum_{|\iota|=2}h_{k,
	\iota_1\iota_2\iota_3\iota_4}z_1^{\iota_1}
z_2^{\iota_2}z_3^{\iota_3}z_4^{\iota_4}\xi_k(x)\right)
\right\rangle\\
&=\left\langle\beta_k,~ D_z\left(\sum_{|\iota|=2}h_{k,
	\iota_1\iota_2\iota_3\iota_4}z_1^{\iota_1}
z_2^{\iota_2}z_3^{\iota_3}z_4^{\iota_4}\xi_k(x)\right)Bz
-\mathscr{L}_1 h_k(z)\xi_k(x)
\right\rangle\\
&=\sum_{|\iota|=2}D_z\left(h_{k,
	\iota_1\iota_2\iota_3\iota_4}z_1^{\iota_1}
z_2^{\iota_2}z_3^{\iota_3}z_4^{\iota_4}\right)Bz+\dfrac{k^2}{\ell^2}
D_0h_k-L_0(h_k)-\widehat{L}_0(h_k)\delta(k)\\
&=2i\omega_1h_{k,2000}z_1^2-2i\omega_1h_{k,0200}z_2^2
+2i\omega_2h_{k,0020}z_3^2-2i\omega_2h_{k,0002}z_4^2
+(i\omega_1+i\omega_2)h_{k,1010}z_1z_3\\
&~~+(i\omega_1-i\omega_2)h_{k,1001}z_1z_4+(-i\omega_1+i\omega_2)
h_{k,0110}z_2z_3+(-i\omega_1-i\omega_2)h_{k,0101}z_2z_4\\
&~~+\dfrac{k^2}{\ell^2}
D_0h_k-L_0(h_k)-\widehat{L}_0(h_k)\delta(k),
\end{array}
\end{equation}
where $\delta(\cdot)$ is the Booean function defined as in \eqref{Boolean}.

In addition, by \eqref{f_j}, we have
\begin{equation}\label{f_2^2}
\begin{array}{ll}
f_2^2(z,0,0,0)&=(I-\pi)\widetilde{F}_2(z,0,0,0)\\
&=\widetilde{F}_2(z,0,0,0)-\Phi~\bar{\Psi}\left(
\begin{array}{ll}
\big\langle\beta_{n_1},~\widetilde{F}_2(z,0,0,0)\big\rangle\xi_{n_1}\\
\big\langle\beta_{n_2},~\widetilde{F}_2(z,0,0,0)\big\rangle\xi_{n_2}\\
\end{array}\right)\\
&=\sum_{|\iota|=2}F_{\iota_1\iota_2\iota_3\iota_4}
\xi_{n_1}^{\iota_1+\iota_2}
\xi_{n_2}^{\iota_3+\iota_4}z_1^{\iota_1}{z}_2^{\iota_2}
z_3^{\iota_3}{z}_4^{\iota_4}-\phi_1f_2^{1(1)}\xi_{n_1}\\
&~~~-\phi_2f_2^{1(2)}\xi_{n_1}-\phi_3f_2^{1(3)}\xi_{n_2}
-\phi_4f_2^{1(4)}\xi_{n_2}.
\end{array}
\end{equation}
which, together with the fact
\begin{equation}
M_2^2U_2^2(z,0)=f_2^2(z,0,0,0),
\end{equation}
yields
\begin{equation}\label{M22U22_f22}
\Big\langle \beta_k,~M_2^2\Big(\sum\nolimits_{j\geq 0}h_j(z)\xi_j(x)\Big)
 \Big\rangle
=\Big\langle\beta_k,~ f_2^2(z,0,0,0)
\Big\rangle.
\end{equation}
 Substituting \eqref{M_2^2_beat_n_k} and \eqref{f_2^2} into \eqref{M22U22_f22} and balancing power of coefficients for $z_1^{\iota_1}{z}_2^{\iota_2}
z_3^{\iota_3}{z}_4^{\iota_4}$ in \eqref{M22U22_f22} gives
\begin{eqnarray*}
	&&h_{0,2000}=\Big(2i\omega_1 I-L_0-\widehat L_0\Big)^{-1}\Big(
	\cfrac{1}{\sqrt{\ell\pi}}F_{2000}
	-\phi_1f_{2000}^{1(1)}
	-\phi_2f_{2000}^{1(2)}
	-\phi_3f_{2000}^{1(3)}
	-\phi_4f_{2000}^{1(4)}\Big),\\
	&&h_{0,1100}=\Big(-L_0-\widehat L_0\Big)^{-1}\Big(
	\cfrac{1}{\sqrt{\ell\pi}}F_{1100}
	-\phi_1f_{1100}^{1(1)}
	-\phi_2f_{1100}^{1(2)}
	-\phi_3f_{1100}^{1(3)}
	-\phi_4f_{1100}^{1(4)}
	\Big),\\
	&&h_{0,1010}=\Big(i(\omega_1+\omega_2)I-L_0-\widehat L_0\Big)^{-1}
	\Big(
	\cfrac{1}{\sqrt{\ell\pi}}F_{1010}
	-\phi_1f_{1010}^{1(1)}
	-\phi_2f_{1010}^{1(2)}
	-\phi_3f_{1010}^{1(3)}
	-\phi_4f_{1010}^{1(4)}\Big),\\
	&&h_{0,1001}=\Big(i(\omega_1-\omega_2)I-L_0-\widehat L_0\Big)^{-1}
	\Big(
	\cfrac{1}{\sqrt{\ell\pi}}F_{1001}
	-\phi_1f_{1001}^{1(1)}
	-\phi_2f_{1001}^{1(2)}
	-\phi_3f_{1001}^{1(3)}
	-\phi_4f_{1001}^{1(4)}\Big),\\
	&&h_{0,0110}=\Big(-i(\omega_1-\omega_2)I-L_0\widehat L_0\Big)^{-1}\Big(
	\cfrac{1}{\sqrt{\ell\pi}}F_{0110}
	-\phi_1f_{0110}^{1(1)}
	-\phi_2f_{0110}^{1(2)}
	-\phi_3f_{0110}^{1(3)}
	-\phi_4f_{0110}^{1(4)}\Big),\\
	&&h_{0,0011}=\Big(-L_0-\widehat L_0\Big)^{-1}\Big(
	\cfrac{1}{\sqrt{\ell\pi}}F_{0011}
	-\phi_1f_{0011}^{1(1)}
	-\phi_2f_{0011}^{1(2)}
	-\phi_3f_{0011}^{1(3)}
	-\phi_4f_{0011}^{1(4)}\Big),\\
	&&h_{0,0020}=\Big(2i\omega_2 I-L_0-\widehat L_0\Big)^{-1}\Big(
	\cfrac{1}{\sqrt{\ell\pi}}F_{0020}
	-\phi_1f_{0020}^{1(1)}
	-\phi_2f_{0020}^{1(2)}
	-\phi_3f_{0020}^{1(3)}
	-\phi_4f_{0020}^{1(4)}\Big).\\
\end{eqnarray*}

\noindent $\mathbf{Case~II:} ~n_1=0,~n_2\neq 0.$

By the fact
\begin{equation*}
    \begin{array}{ll}
    \gamma_{jn_1n_1}=\begin{cases}
    \cfrac{1}{\sqrt{\ell\pi}},&j=0,\\
    ~~~0,&j\neq 0,\\
    \end{cases},~~
    \gamma_{jn_1n_2}=\begin{cases}
    \cfrac{1}{\sqrt{\ell\pi}},&j=n_2,\\
    ~~~0,&j\neq n_2,\\
    \end{cases}\\
    \gamma_{jn_2n_2}=\begin{cases}
    \cfrac{1}{\sqrt{\ell\pi}},&j=0,\\
    \cfrac{1}{\sqrt{2\ell\pi}},&j=2n_2,\\
    ~~~0,& otherwise,\\
    \end{cases}
    \end{array}
\end{equation*}
we have
\begin{equation*}
   \begin{array}{ll}
   &D_wf_2^1(z,0,0,0)U_2^2(z,0)\\
   &=\dfrac{1}{\sqrt{\ell\pi}}\bar{\Psi}\left(
   \begin{array}{cl}
   S_{wz_1}(h_0)z_1 + S_{wz_2}(h_0)z_2
   + S_{wz_3}(h_{n_2})z_3 + S_{wz_4}(h_{n_2})z_4\\
   S_{wz_1}(h_{n_2})z_1 + S_{wz_2}(h_{n_2})z_2
   + S_{wz_3}(h_0)z_3 + S_{wz_4}(h_0)z_4+\frac{1}{\sqrt{2}}\left(
   S_{wz_3}(h_{2n_2})z_3+S_{wz_4}(h_{2n_2})z_4
   \right)
   \end{array}\right),\\
   \end{array}
\end{equation*}
and
\begin{equation*}
   \begin{array}{ll}
   D{\widehat{w}}f_2^1(z,0,0,0)\widehat{U}_2^2(z,0)=
   \cfrac{1}{\sqrt{\ell\pi}}\bar{\Psi}\left(
   \begin{array}{ll}
   S_{\widehat{w}z_1}(h_0)z_1
   + S_{\widehat{w}z_2}(h_0)z_2\\
   S_{\widehat{w}z_3}(h_0)z_3
   + S_{\widehat{w}z_4}(h_0)z_4
   \end{array}\right).\\
   \end{array}
\end{equation*}
Therefore, we obtain
\begin{equation*}
    \begin{array}{ll}
    	&\dfrac{1}{3!}\mathrm{Proj}_{\mathrm{Ker}(M_3^1)}
    \left(D_wf_2^1(z,0,0,0)U_2^2(z,0)+
    D_{\widehat w}f_2^1(z,0,0,0)\widehat U_2^2(z,0)\right)\\
    &\quad=\left(\begin{array}{ll}
    \big(E_{2100}+\widehat E_{2100}\big)z_1^2z_2+\big(E_{1011}+\widehat E_{1011}\big)z_1z_3z_4\\
    \big(E_{1200}+\widehat E_{1200}\big)z_1z_2^2+\big(E_{0111}+\widehat E_{0111}\big)z_2z_3z_4\\
    \big(E_{0021}+\widehat E_{0021}\big)z_3^2z_4+\big(E_{1110}+\widehat E_{1110}\big)z_1z_2z_3\\
    \big(E_{0012}+\widehat E_{0012}\big)z_3z_4^2+\big(E_{1101}+\widehat E_{1101}\big)z_1z_2z_4\\
    \end{array}\right).
    \end{array}
\end{equation*}
where
\begin{eqnarray*}
  &&E_{2100}=\cfrac{1}{6\sqrt{\ell\pi}}\bar{\psi}_1
  \Big(S_{wz_1}(h_{0,1100})
  +S_{wz_2}(h_{0,2000})\Big),\\
  &&E_{1011}=\cfrac{1}{6\sqrt{\ell\pi}}\bar{\psi}_1
  \Big(S_{wz_1}(h_{0,0011})
  +S_{wz_3}(h_{n_2,1001})+S_{wz_4}(h_{n_2,1010})\Big),\\
  &&E_{0021}=\cfrac{1}{6\sqrt{\ell\pi}}\bar{\psi}_3\Big(
  S_{wz_3}(h_{0,0011})
  +S_{wz_4}(h_{0,0020})+\cfrac{1}{\sqrt{2}}
  \big(S_{wz_3}(h_{2n_2,0011})+S_{wz_4}(h_{2n_2,0020})\big)\Big),\\
  &&E_{1110}=\cfrac{1}{6\sqrt{\ell\pi}}\bar{\psi}_3\Big(
  S_{wz_1}(h_{n_2,0110})
  +S_{wz_2}(h_{n_2,1010})+S_{wz_3}(h_{0,1100})
  +\cfrac{1}{\sqrt{2}}
  S_{wz_3}(h_{2n_2,1100})\Big),\\
  &&\widehat E_{2100}=\cfrac{1}{6\sqrt{\ell\pi}}\bar{\psi}_1\Big(S_{\widehat wz_1}(h_{0,1100})
  +S_{\widehat wz_2}(h_{0,2000})\Big),~~\\
  &&\widehat E_{1011}=\cfrac{1}{6\sqrt{\ell\pi}}\bar{\psi}_1 S_{\widehat wz_1}(h_{0,0011})
  ,\\
  &&\widehat E_{0021}=\cfrac{1}{6\sqrt{\ell\pi}}\bar{\psi}_3\Big(
  S_{\widehat wz_3}(h_{0,0011})
  +S_{\widehat wz_4}(h_{0,0020})\Big),~~\\
  &&\widehat E_{1110}=\cfrac{1}{6\sqrt{\ell\pi}}\bar{\psi}_3 S_{\widehat wz_3}(h_{0,1100}),\\
  &&E_{1200}=\overline{E_{2100}},~E_{0111}=\overline{E_{1011}},~
  E_{0012}=\overline{E_{0021}},~E_{1101}=\overline{E_{1110}},~\\
  &&\widehat{E}_{1200}=\overline{\widehat{E}_{2100}},
  ~\widehat{E}_{0111}=\overline{\widehat{E}_{1011}},~
  \widehat{E}_{0012}=\overline{\widehat{E}_{0021}},
  ~\widehat{E}_{1101}=\overline{\widehat{E}_{1110}}.~
\end{eqnarray*}

Using the same method as the one in $\mathbf{Case~I}$, we can obtain

\begin{eqnarray*}
&&h_{0,2000}=\Big(2i\omega_1 I-L_0-\widehat L_0\Big)^{-1}\Big(
           \cfrac{1}{\sqrt{\ell\pi}}F_{2000}
           -\phi_1f_{2000}^{1(1)}
           -\phi_2f_{2000}^{1(2)}\Big),\\
&&h_{0,1100}=\Big(-L_0-\widehat L_0\Big)^{-1}\Big(
           \cfrac{1}{\sqrt{\ell\pi}}F_{1100}
           -\phi_1f_{1100}^{1(1)}
           -\phi_2f_{1100}^{1(2)}
           \Big),\\
&&h_{0,0011}=\Big(-L_0-\widehat L_0\Big)^{-1}\Big(
           \cfrac{1}{\sqrt{\ell\pi}}F_{0011}
           -\phi_1f_{0011}^{1(1)}
           -\phi_2f_{0011}^{1(2)}\Big),\\
&&h_{0,0020}=\Big(2i\omega_2 I-L_0-\widehat L_0\Big)^{-1}\Big(
           \cfrac{1}{\sqrt{\ell\pi}}F_{0020}
           -\phi_1f_{0020}^{1(1)}
           -\phi_2f_{0020}^{1(2)}\Big),\\
&&h_{n_2,1001}=\Big(i(\omega_1-\omega_2)I+\cfrac{n_2^2}{\ell^2}
           D_0-L_0\Big)^{-1}\Big(
           \cfrac{1}{\sqrt{\ell\pi}}F_{1001}
           -\phi_3f_{1001}^{1(3)}
           -\phi_4f_{1001}^{1(4)}\Big),\\
&&h_{n_2,1010}=\Big(i(\omega_1+\omega_2)I+\cfrac{n_2^2}{\ell^2}
           D_0-L_0\Big)^{-1}\Big(
           \cfrac{1}{\sqrt{\ell\pi}}F_{1010}
           -\phi_3f_{1010}^{1(3)}
           -\phi_4f_{1010}^{1(4)}\Big),\\
&&h_{n_2,0110}=\Big(-i(\omega_1-\omega_2)I+\cfrac{n_2^2}{\ell^2}
           D_0-L_0\Big)^{-1}\Big(
           \cfrac{1}{\sqrt{\ell\pi}}F_{0110}
           -\phi_3f_{0110}^{1(3)}
           -\phi_4f_{0110}^{1(4)}\Big),\\
&&h_{2n_2,0011}=\Big(\cfrac{(2 n_2)^2}{\ell^2}D_0-L_0\Big)^{-1}
           \cfrac{1}{\sqrt{2\ell\pi}}F_{0011},\\
&&h_{2n_2,0020}=\Big(2i\omega_2 I+\cfrac{(2n_2)^2}{\ell^2}
           D_0-L_0\Big)^{-1}\cfrac{1}{\sqrt{2\ell\pi}}F_{0020},\\
&&h_{2n_2,1100}=0.
\end{eqnarray*}

\vspace{0.3cm}
\noindent $\mathbf{Case~III:} ~n_1\neq 0,~n_2\neq 0$.

In fact, we have
\begin{equation*}
   \begin{array}{ll}
   \gamma_{jn_kn_k}=\begin{cases}
   \cfrac{1}{\sqrt{\ell\pi}},&j=0,\\
   \cfrac{1}{\sqrt{2\ell\pi}},&j=2n_k,\\
   ~~~0,& otherwise,\\
   \end{cases}\qquad
   \gamma_{jn_1n_2}=\begin{cases}
   \cfrac{1}{\sqrt{\ell\pi}},&j=n_2-n_1=0,\\
   \cfrac{1}{\sqrt{2\ell\pi}},&j=n_1+n_2 ~\mathrm{or}~j=n_2-n_1\neq 0,\\
   ~~~0,& otherwise.\\
   \end{cases}
   \end{array}
\end{equation*}
Then
\begin{equation*}
    \begin{array}{ll}
    &D_wf_2^1(z,0,0,0)U_2^2(z,0)\\
    &=\bar{\Psi}\left(
    \begin{array}{lr}
    &\cfrac{1}{\sqrt{\ell\pi}}\Big(S_{wz_1}(h_0)z_1
    +S_{wz_2}(h_0)z_2\Big)
    +\cfrac{1}{\sqrt{2\ell\pi}}\Big(S_{wz_1}(h_{2n_1})z_1
    +S_{wz_2}(h_{2n_1})z_2+S_{wz_3}(h_{n_1+n_2})z_3\\
    &+S_{wz_4}(h_{n_1+n_2})z_4\Big)
    +\gamma_{(n_2-n_1)n_1n_2}\Big(S_{wz_3}(h_{n_2-n_1})z_3
    +S_{wz_4}(h_{n_2-n_1})z_4\Big)\\
    &\cfrac{1}{\sqrt{\ell\pi}}\Big(S_{wz_3}(h_0)z_3
    +S_{wz_4}(h_0)z_4\Big)
    +\cfrac{1}{\sqrt{2\ell\pi}}\Big(S_{wz_3}(h_{2n_2})z_3
    +S_{wz_4}(h_{2n_2})z_4+S_{wz_1}(h_{n_1+n_2})z_1\\
    &+S_{wz_2}(h_{n_1+n_2})z_2\Big)
    +\gamma_{(n_2-n_1)n_1n_2}\Big(S_{wz_1}(h_{n_2-n_1})z_1
    +S_{wz_2}(h_{n_2-n_1})z_2\Big)\\
    \end{array}\right),\\
    \end{array}
\end{equation*}
and
\begin{equation*}
   \begin{array}{ll}
   D{\widehat{w}}f_2^1(z,0,0,0)\widehat{U}_2^2(z,0)=
   \cfrac{1}{\sqrt{\ell\pi}}\bar{\Psi}\left(
   \begin{array}{ll}
   S_{\widehat{w}z_1}(h_0)z_1
   + S_{\widehat{w}z_2}(h_0)z_2
   +S_{\widehat{w}z_3}(h_0)\delta(n_2-n_1)z_3
   + S_{\widehat{w}z_4}(h_0)\delta(n_2-n_1)z_4\\
   S_{\widehat{w}z_1}(h_0)\delta(n_2-n_1)z_1
   + S_{\widehat{w}z_2}(h_0)\delta(n_2-n_1)z_2
   +S_{\widehat{w}z_3}(h_0)z_3
   + S_{\widehat{w}z_4}(h_0)z_4
   \end{array}\right).\\
   \end{array}
\end{equation*}
Hence, we obtain
\begin{equation*}
   \begin{array}{ll}
   &\dfrac{1}{3!}\mathrm{Proj}_{\mathrm{Ker}(M_3^1)}
   \left(D_wf_2^1(z,0,0,0)U_2^2(z,0)+
   D_{\widehat w}f_2^1(z,0,0,0)\widehat U_2^2(z,0)\right)\\
   &\quad=\left(\begin{array}{ll}
   \big(E_{2100}+\widehat E_{2100}\big)z_1^2z_2+\big(E_{1011}+\widehat E_{1011}\big)z_1z_3z_4\\
   \big(E_{1200}+\widehat E_{1200}\big)z_1z_2^2+\big(E_{0111}+\widehat E_{0111}\big)z_2z_3z_4\\
   \big(E_{0021}+\widehat E_{0021}\big)z_3^2z_4+\big(E_{1110}+\widehat E_{1110}\big)z_1z_2z_3\\
   \big(E_{0012}+\widehat E_{0012}\big)z_3z_4^2+\big(E_{1101}+\widehat E_{1101}\big)z_1z_2z_4\\
   \end{array}\right),
   \end{array}
\end{equation*}
where
\begin{equation*}
   \begin{array}{lr}
   E_{2100}=\cfrac{1}{6\sqrt{\ell\pi}}\bar{\psi}_1\Big(
         S_{wz_1}(h_{0,1100})+S_{wz_2}(h_{0,2000})
         +\cfrac{1}{\sqrt{2}}\big(
         S_{wz_1}(h_{2n_1,1100})+S_{wz_2}(h_{2n_1,2000})\big)\Big),\\
   E_{1011}=\cfrac{1}{6}\bar{\psi}_1
         \Big(\cfrac{1}{\sqrt{\ell\pi}}S_{wz_1}(h_{0,0011})+
         \cfrac{1}{\sqrt{2\ell\pi}}\big(
         	S_{wz_1}(h_{2n_1,0011})+S_{wz_3}(h_{n_1+n_2,1001})
         	+S_{wz_4}(h_{n_1+n_2,1010})\big)\\
         \qquad\qquad+\gamma_{(n_2-n_1)n_1n_2}\big(S_{wz_3}(h_{n_2-n_1,1001})
          +S_{wz_4}(h_{n_2-n_1,1010})\big)
         \Big),\\
   E_{0021}=\cfrac{1}{6\sqrt{\ell\pi}}\bar{\psi}_3\Big(
         S_{wz_3}(h_{0,0011})
         +S_{wz_4}(h_{0,0020})+\cfrac{1}{\sqrt{2}}\big
         (S_{wz_3}(h_{2n_2,0011})+S_{wz_4}(h_{2n_2,0020})\big)\Big),\\
   E_{1110}=\cfrac{1}{6}\bar{\psi}_3
         \Big(\cfrac{1}{\sqrt{\ell\pi}}S_{wz_3}(h_{0,1100})+
         \cfrac{1}{\sqrt{2\ell\pi}}\big(
         S_{wz_3}(h_{2n_2,1100})+S_{wz_1}(h_{n_1+n_2,0110})
          +S_{wz_2}(h_{n_1+n_2,1010})\big)\\
           \qquad\qquad+\gamma_{(n_2-n_1)n_1n_2}\big(
           S_{wz_1}(h_{n_2-n_1,0110})
          +S_{wz_2}(h_{n_2-n_1,1010})\big)
        \Big),\\
   \widehat E_{2100}=\cfrac{1}{6\sqrt{\ell\pi}}
         \bar{\psi}_1\Big(S_{\widehat wz_1}(h_{0,1100})
         +S_{\widehat wz_2}(h_{0,2000})\Big),~~\\
   \widehat E_{1011}=\cfrac{1}{6\sqrt{\ell\pi}}
         \bar{\psi}_1 \Big(
         S_{\widehat wz_1}(h_{0,0011})
         +\delta(n_2-n_1)\big(S_{\widehat wz_3}(h_{0,1001})
         +S_{\widehat wz_4}(h_{0,1010})
         \big)
        \Big),\\
   \widehat E_{0021}=\cfrac{1}{6\sqrt{\ell\pi}}\bar{\psi}_3\Big(
         S_{\widehat wz_3}(h_{0,0011})
         +S_{\widehat wz_4}(h_{0,0020})\Big),~~\\
   \widehat E_{1110}=\cfrac{1}{6\sqrt{\ell\pi}}
         \bar{\psi}_3 \Big(S_{\widehat wz_3}(h_{0,1100})
         +\delta(n_2-n_1)\big(S_{\widehat wz_1}(h_{0,0110})
         +S_{\widehat wz_2}(h_{0,1010})
         \big)
         \Big),\\
   E_{1200}=\overline{E_{2100}},~E_{0111}=\overline{E_{1011}},~
   E_{0012}=\overline{E_{0021}},~E_{1101}=\overline{E_{1110}},~\\
   \widehat{E}_{1200}=\overline{\widehat{E}_{2100}},
   ~\widehat{E}_{0111}=\overline{\widehat{E}_{1011}},~
   \widehat{E}_{0012}=\overline{\widehat{E}_{0021}},
   ~\widehat{E}_{1101}=\overline{\widehat{E}_{1110}}.
   \end{array}
\end{equation*}
With the same method mentioned in $\mathbf{Case~I}$, we have
\begin{eqnarray*}
  && h_{0,2000}=\Big(2i\omega_1 I-L_0-\widehat L_0\Big)^{-1}
       \cfrac{1}{\sqrt{\ell\pi}}F_{2000},\\
  && h_{0,1100}=\Big(-L_0-\widehat L_0\Big)^{-1}
       \cfrac{1}{\sqrt{\ell\pi}}F_{1100},\\
  && h_{0,0011}=\Big(-L_0-\widehat L_0\Big)^{-1}
       \cfrac{1}{\sqrt{\ell\pi}}F_{0011},\\
  && h_{0,0020}=\Big(2i\omega_2 I-L_0-\widehat L_0\Big)^{-1}
       \cfrac{1}{\sqrt{\ell\pi}}F_{0020},\\
  && h_{2n_1,2000}=\begin{cases}
       \Big(2i\omega_1 I+\cfrac{(2n_1)^2}{\ell^2}
       D_0-L_0\Big)^{-1}
       \Big(\cfrac{1}{\sqrt{2\ell\pi}}F_{2000}-\phi_3f_{2000}^{1(3)}
       -\phi_4f_{2000}^{1(4)}\Big),&n_2=2n_1,\\
       \Big(2i\omega_1 I+\cfrac{(2n_1)^2}{\ell^2}
       D_0-L_0\Big)^{-1}
       \cfrac{1}{\sqrt{2\ell\pi}}F_{2000},&n_2\neq 2n_1,
       \end{cases}\\
  && h_{2n_1,1100}=\begin{cases}
       \Big(\cfrac{(2n_1)^2}{\ell^2}
       D_0-L_0\Big)^{-1}
       \Big(\cfrac{1}{\sqrt{2\ell\pi}}F_{1100}-\phi_3f_{1100}^{1(3)}
       -\phi_4f_{1100}^{1(4)}\Big),&n_2=2n_1,\\
       \Big(\cfrac{(2n_1)^2}{\ell^2}
       D_0-L_0\Big)^{-1}
       \cfrac{1}{\sqrt{2\ell\pi}}F_{1100},&n_2\neq 2n_1,
       \end{cases}\\
  && h_{2n_1,0011}=\begin{cases}
  	   \Big(\cfrac{(2n_1)^2}{\ell^2}
  	   D_0-L_0\Big)^{-1} \cfrac{1}{\sqrt{2\ell\pi}}F_{0011},&n_2=n_1,\\
       \Big(\cfrac{(2n_1)^2}{\ell^2}
       D_0-L_0\Big)^{-1}
       \Big(-\phi_3f_{0011}^{1(3)}
       -\phi_4f_{0011}^{1(4)}\Big),&n_2=2n_1,\\
       ~~~~0,& otherwise,
       \end{cases}\\
  && h_{2n_2,0020}=\Big(2i\omega_2 I+\cfrac{(2n_2)^2}{\ell^2}
       D_0-L_0\Big)^{-1}\cfrac{1}{\sqrt{2\ell\pi}}F_{0020},\\
  && h_{2n_2,0011}=\Big(\cfrac{(2n_2)^2}{\ell^2}
       D_0-L_0\Big)^{-1}\cfrac{1}{\sqrt{2\ell\pi}}F_{0011},\\
  && h_{2n_2,1100}=\begin{cases}
  	    \Big(\cfrac{(2n_2)^2}{\ell^2}
      	D_0-L_0\Big)^{-1}
    	\cfrac{1}{\sqrt{2\ell\pi}}F_{1100},&n_2=n_1,\\
    	~~~~0,& otherwise,
      \end{cases}\\
  && h_{n_1+n_2,1001}=\Big((i\omega_1-i\omega_2)I
       +\cfrac{(n_1+n_2)^2}{\ell^2}D_0-L_0\Big)^{-1}
       \cfrac{1}{\sqrt{2\ell\pi}}F_{1001},\\
  && h_{n_1+n_2,1010}=\Big((i\omega_1+i\omega_2)I
       +\cfrac{(n_1+n_2)^2}{\ell^2}D_0-L_0\Big)^{-1}
       \cfrac{1}{\sqrt{2\ell\pi}}F_{1010},\\
  && h_{n_1+n_2,0110}=\Big((-i\omega_1+i\omega_2)I
       +\cfrac{(n_1+n_2)^2}{\ell^2}D_0-L_0\Big)^{-1}
       \cfrac{1}{\sqrt{2\ell\pi}}F_{0110},\\
  && h_{n_2-n_1,1001}=\begin{cases}
       \Big((i\omega_1-i\omega_2)I-L_0-\widehat{L}_0\Big)^{-1}
       \cfrac{1}{\sqrt{\ell\pi}}F_{1001},&n_2=n_1,\\
       \Big((i\omega_1-i\omega_2)I+\cfrac{(n_2-n_1)^2}{\ell^2}
       D_0-L_0\Big)^{-1}
       \Big(\cfrac{1}{\sqrt{2\ell\pi}}F_{1001}-\phi_1f_{1001}^{1(1)}
       -\phi_2f_{1001}^{1(2)}\Big),&n_2=2n_1,\\
       \Big((i\omega_1-i\omega_2)I+\cfrac{(n_2-n_1)^2}{\ell^2}
       D_0-L_0\Big)^{-1}
       \cfrac{1}{\sqrt{2\ell\pi}}F_{1001},& otherwise,
       \end{cases}\\
  && h_{n_2-n_1,1010}=\begin{cases}
       \Big((i\omega_1+i\omega_2)I-L_0-\widehat{L}_0\Big)^{-1}
       \cfrac{1}{\sqrt{\ell\pi}}F_{1010},&n_2=n_1,\\
       \Big((i\omega_1+i\omega_2)I+\cfrac{(n_2-n_1)^2}{\ell^2}
       D_0-L_0\Big)^{-1}
       \Big(\cfrac{1}{\sqrt{2\ell\pi}}F_{1010}-\phi_1f_{1010}^{1(1)}
       -\phi_2f_{1010}^{1(2)}\Big),&n_2=2n_1,\\
       \Big((i\omega_1+i\omega_2)I+\cfrac{(n_2-n_1)^2}{\ell^2}
       D_0-L_0\Big)^{-1}
       \cfrac{1}{\sqrt{2\ell\pi}}F_{1010},& otherwise,
       \end{cases}\\
  && h_{n_2-n_1,0110}=\begin{cases}
       \Big((-i\omega_1+i\omega_2)I-L_0-\widehat{L}_0\Big)^{-1}
       \cfrac{1}{\sqrt{\ell\pi}}F_{0110},&n_2=n_1,\\
       \Big((-i\omega_1+i\omega_2)I+\cfrac{(n_2-n_1)^2}{\ell^2}
       D_0-L_0\Big)^{-1}
       \Big(\cfrac{1}{\sqrt{2\ell\pi}}F_{0110}-\phi_1f_{0110}^{1(1)}
       -\phi_2f_{0110}^{1(2)}\Big),&n_2=2n_1,\\
       \Big((-i\omega_1+i\omega_2)I+\cfrac{(n_2-n_1)^2}{\ell^2}
       D_0-L_0\Big)^{-1}
       \cfrac{1}{\sqrt{2\ell\pi}}F_{0110},& otherwise.
       \end{cases}\\
\end{eqnarray*}

Now, we obtain the full expression of $g_3^1(z,0,0,0)$:
\begin{equation}\label{normal_forms_B4}
	\frac{1}{3!}g_3^1(z,0,0)
	=\frac{1}{3!}\text{Proj}_{\mathrm{Ker}(M_2^1)}f_3^1(z,0,0,0)
	=\left(\begin{array}{ll}
	B_{2100}z_1^2z_2+B_{1011}z_1z_3z_4\\
	B_{1200}z_1z_2^2+B_{0111}z_2z_3z_4\\
	B_{0021}z_3^2z_4+B_{1110}z_1z_2z_3\\
	B_{0012}z_3z_4^2+B_{1011}z_1z_2z_4\\
	\end{array}\right),
\end{equation}
where
\begin{equation*}
    \begin{array}{ll}
    B_{2100}=C_{2100}+\frac{3}{2}(D_{2100}+E_{2100}+\widehat E_{2100}),~
    B_{1011}=C_{1011}+\frac{3}{2}(D_{1011}+E_{1011}+\widehat E_{1011}),~\\
    B_{0021}=C_{0021}+\frac{3}{2}(D_{0021}+E_{0021}+\widehat E_{0021}),~
    B_{1110}=C_{1110}+\frac{3}{2}(D_{1110}+E_{1110}+\widehat E_{1110}),~\\
    B_{1200}=\overline{B_{2100}},~B_{0111}=\overline{B_{1011}},~
    B_{0012}=\overline{B_{0021}},~B_{1101}=\overline{B_{1110}}.
    \end{array}
\end{equation*}
Then, by \eqref{normal_forms_B2} and \eqref{normal_forms_B4}, the
normal forms for double Hopf bifurcation up to the third order take:
\begin{equation}\label{z_normal}
\begin{array}{lr}
\dot{z}_1=~~\,i\omega_1z_1+B_{11}\alpha_1z_1+B_{21}\alpha_2z_1
+B_{2100}z^2_1 z_2+B_{1011}z_1 z_3z_4+h.o.t.,\\
\dot{z}_2=-i\omega_1z_2+\overline{B_{11}}\alpha_1z_2
+\overline{B_{21}}\alpha_2z_2+\overline{B_{2100}}z_1 z^2_2 +\overline{B_{1011}}z_2z_3z_4+h.o.t.,\\
\dot{z}_3=~~\,i\omega_2z_3+B_{13}\alpha_1z_3+B_{23}\alpha_2z_3
+B_{0021}z_3^2 z_4+B_{1110}z_1z_2 z_3+h.o.t.,\\
\dot{z}_2=-i\omega_2z_4+\overline{B_{13}}\alpha_1z_4
+\overline{B_{23}}\alpha_2z_4+\overline{B_{0021}}z_3 z_4^2 +\overline{B_{1110}}z_1z_2z_4+h.o.t.,
\end{array}
\end{equation}
and by virtue of the polar coordinate transformation $z_1=\tilde{\rho}_1 e^{i \theta_1}, z_2=\tilde{\rho}_2e^{i \theta_2}$
and variable substitution:
\begin{equation*}
\epsilon_1=\text{Sign}\big(\text{Re}(B_{2100})\big),~
\epsilon_2=\text{Sign}\big(\text{Re}(B_{0021})\big),~
\rho_1=\tilde{\rho}_1\sqrt{|B_{2100}|},~
\rho_2=\tilde{\rho}_2\sqrt{|B_{0021}|},~
\tilde{t}=t\epsilon_1,
\end{equation*}
the system \eqref{z_normal}, truncated at the third order, becomes
\begin{equation}\label{rho_1_2}
\begin{array}{ll}
\dot{\rho}_1=\rho_1\big(\kappa_1(\alpha)+\rho_1^2+b_0\rho_2^2\big),\\
\dot{\rho}_2=\rho_2\big(\kappa_2(\alpha)+{c}_0\rho_1^2+d_0\rho_2^2\big),
\end{array}
\end{equation}
where
\begin{equation*}
\begin{array}{ll}
\kappa_1(\alpha)= \epsilon_1\left(\text{Re}(B_{11})\alpha_1+\text{Re}(B_{21})\alpha_2\right),\\
\kappa_2(\alpha)= \epsilon_1\left(\text{Re}(B_{13})\alpha_1+\text{Re}(B_{23})\alpha_2\right),\\
b_0= \cfrac{\epsilon_1 \epsilon_2 \text{Re}(B_{1011})}{\text{Re}(B_{0021})},~{c}_0= \cfrac{ \text{Re}(B_{1110})}{\text{Re}(B_{2100})},~d_0=\epsilon_1 \epsilon_2.
\end{array}
\end{equation*}

Clearly, $E_1=(0,0)$ is always an equilibrium and that up to three other non-negative equilibria solutions can appear:
\begin{equation*}
    \begin{array}{ll}
     &E_2=\big(\sqrt{-\kappa_1(\alpha)}, 0\big),~~~\qquad\qquad
     \qquad\qquad
     \mathrm{for}~ \kappa_1(\alpha)<0,\\
     &E_3=\big(0, \sqrt{-\frac{\kappa_2(\alpha)}{d_0}}\big),~~~~\qquad\qquad
     \qquad\quad\quad
     \mathrm{for}~ d_0\kappa_2(\alpha)<0,\\
     &E_4=\big(\sqrt{\frac{b_0\kappa_2(\alpha)-d_0\kappa_1(\alpha)}{d_0-b_0 c_0}}, \sqrt{\frac{c_0\kappa_1(\alpha)-\kappa_2(\alpha)}{d_0-b_0 c_0}}\big),
     ~\mathrm{for}~ \frac{b_0\kappa_2(\alpha)-d_0\kappa_1(\alpha)}{d_0-b_0 c_0}>0,
     \frac{c_0\kappa_1(\alpha)-\kappa_2(\alpha)}{d_0-b_0 c_0}>0.
    \end{array}
\end{equation*}
The dynamics of system \eqref{model_original} near the double Hopf bifurcation point $\mu_0$ is topologically equivalent to
that of \eqref{rho_1_2} near $(\alpha_1, \alpha_2)=(0, 0)$. Here $E_1$ is associated with the positive constant steady state, $E_2$
is associated with the spatially homogeneous periodic solution, $E_3$ is associated with the spatially nonhomogeneous periodic solution,
and $E_4$ is associated with the spatially nonhomogeneous  quasi-periodic solution. Moreover, according to \cite{Guckenheimer-1983}
there are twelve distinct types of unfoldings according to the signs of coefficients $b_0$, ${c}_0$, $d_0$ and $d_0-b_0 c_0$ (see Table \ref{twelve_unfoldings}).

\begin{table}[tbp]
	\centering
	\caption{The twelve unfoldings.}\label{twelve_unfoldings}
	
	\begin{tabular}{ccccccccccccc}
		\hline
		Case & {I}a & {I}b  & {II} & {III}& {IV}a & {IV}b  & {V}& {VI}a& {VI}b& {VII}a& {VII}b& {VIII}     \\ \hline
		$d_0$ &  +1 &+1& +1&+1&+1&+1&--1&--1&--1&--1&--1&--1\\
		$b_0 $ &  + &+& +&--&--&--&+&+&+&--&--&--\\
		$c_0$ &  + &+& --&+&--&--&+&--&--&+&+&--\\
		$d_0-b_0c_0$ &  + &--& +&+&+&--&--&+&--&+&--&--\\
		\hline
	\end{tabular}
\end{table}

\begin{remark}
	The expressions presented to calculate the normal forms seem complicated and tedious, and some very important coefficient vectors are not explicitly shown, for example, $F_{\iota_1\iota_2\iota_3\iota_4}$, $F_{w_1z_i}$, $F_{w_2z_i}$,
	$i=1,2,3,4$.
	For an abstract equation with a large number of variables, it may take a lot of work to express all these coefficient vectors, but if the system  is only of two variables, i.e., $n=2$, this complexity can be reduced. We could give the explicit representation of these coefficient vectors by  partial derivatives of $F$ and the basis of center subspace, and we put this in Appendix. 	
\end{remark}

\section{Applicaton to a predator-prey model}

In this section, we consider the following reaction-diffusion Holling-Tanner system with nonlocal prey competition:
\begin{equation}\label{model_holling_tanner}
\begin{cases}
\cfrac{\partial u}{\partial t}=d_1\Delta u+u\Big(1-\beta\widehat{u}\Big)-\cfrac{buv}{1+u}, &x\in\Omega, t>0,\\
\cfrac{\partial v}{\partial t}=d_2\Delta v+cv \Big(1-\cfrac{v}{u}\Big),&x\in\Omega, t>0,\\
\partial_{\nu}u=\partial_{\nu}v=0,&x\in \partial\Omega,t>0,\\
u(x,0)=u_0(x)>0,~~v(x,0)=v_0(x)>0,&x\in\Omega,
\end{cases}
\end{equation}
where $\Omega=(0,\ell\pi)$, $u(x,t)$ and $v(x,t)$ represent the prey and predator densities at location $x$ and time $t$ respectively, $\widehat{u}=\frac{1}{\ell\pi}\int_{0}^{\ell\pi} u(y,t)dy$ stands for the nonlocal prey competition, and $\beta, b, c, d_1, d_2$ are parameters and all positive. Particularly, $c$ is the intrinsic growth rate of predator, $d_1$ and $d_2$ are the diffusive rates, and $b$ and $\beta$ measure the strength of
interspecific and intraspecific interaction.

The model \eqref{model_holling_tanner} was first proposed and discussed by Merchant and Nagata \cite{Merchant-2011} when $\Omega=(-\infty, +\infty)$
and their results indicate that the
nonlocal competition may be an important mechanism for pattern
formation. When $\Omega=(0,\ell\pi)$, Chen {\it et al.}\cite{ChenW-2018}
studied the existence of spatially nonhomogeneous periodic solutions induced by
nonlocal competition.
When $\widehat{u}=u$, the model \eqref{model_holling_tanner} is reduced to the classical Holling -Tanner predator-prey model, of which the dynamics have been extensively studied (see e.g.
\cite{ChenS-2012, MaL-2013, PengW-2005, PengW-2007, LiJ-2013}
and references therein).
It's worth mentioning that \eqref{model_holling_tanner} is more likely to undergoes a Hopf bifurcation than the classical Holling-Tanner system
\cite{ChenW-2018,LiJ-2013}, which increases the possibility for double Hopf bifurcation.

The system \eqref{model_holling_tanner} has a unique constant positive equilibrium $E_*=(\lambda,\lambda)$ with $\lambda$ satisfying $(1-\beta\lambda)(1+\lambda)=b\lambda$, and $\lambda$ is strictly decreasing with respect to $b$. Therefore, we could choose $\lambda$ as one parameter instead of $b$ and apply the method obtained in previous sections to compute the normal forms of system \eqref{model_holling_tanner} near the double Hopf singularity.

\subsection{Linear analysis and existence of double Hopf bifurcation}

Let $U=(u,v)^{^T}$, and then the linearized system of model \eqref{model_holling_tanner} at the
positive equilibrium $(\lambda,\lambda)$ has the form
\begin{equation*}
	\dot{U}=D\Delta U+L(\lambda,c)U+\widehat{L}(\lambda,c)\widehat{U},
\end{equation*}
where $D=\mathrm{diag}(d_1,d_2)$, and
\begin{equation*}
     L(\lambda, c)= \Bigg(\begin{array}{cc}
     \frac{\lambda (1-\beta\lambda)}{1+\lambda}~&~ -(1-\beta\lambda)\\
     c ~&~ -c
     \end{array}\Bigg),
     \widehat{L}(\lambda, c)=\displaystyle
     \Bigg(\begin{array}{cc}
     -\beta\lambda~&~ 0\\
     0~&~ 0
     \end{array}\Bigg).
\end{equation*}
The sequence of characteristic equations are as follows
\begin{equation}\label{CE_model}
\eta^2 - T_n(\lambda,c)\eta+D_n(\lambda,c)=0,~n\in \mathbb{N}_0,
\end{equation}
where
\begin{equation}\label{T_0_D_0}
T_0(\lambda,c)=-c+\cfrac{\lambda(1-\beta-2\beta \lambda)}{1+\lambda},~~ D_0(\lambda,c)=\beta c \lambda+\cfrac{c(1-\beta\lambda)}{1+\lambda},
\end{equation}
and for $n\in \mathbb{N}$,
\begin{equation}\label{T_n_D_n}
\begin{array}{l}
T_n(\lambda,c)=-c+\cfrac{\lambda(1-\beta \lambda)}{1+\lambda}-(d_1+d_2)\cfrac{n^2}{\ell^2},\\
D_n(\lambda,c)=\cfrac{c(1-\beta\lambda)}{1+\lambda}+\left(d_1 c-\cfrac{d_2\lambda(1-\beta\lambda)}{1+\lambda}\right)\cfrac{n^2}{\ell^2}+d_1d_2\cfrac{n^4}{\ell^4}.
\end{array}
\end{equation}

For simplicity of notations, we denote
\begin{equation}\label{c_0_c_n}
\begin{array}{ll}
p(\lambda)= \cfrac{\lambda(1-\beta\lambda)}{1+\lambda},~~
&c_d(\lambda)= \cfrac{d_2}{d_1}(1-\beta\lambda)\Big(1-\sqrt{\cfrac{1}{1+\lambda}}\Big)^2,\\
c_0(\lambda)=p(\lambda)-\beta\lambda,
&c_n(\lambda)= p(\lambda)-\cfrac{d_1+d_2}{\ell^2}n^2.
\end{array}
\end{equation}
The following lemma is a summary of the properties on \eqref{c_0_c_n}, and the proof
is trivial and we omit it.
\begin{lemma}\label{lemma_p_c_properties}
	Suppose that $d_1, d_2>0$, $\beta>0$, and $p(\lambda), c_{d}(\lambda), c_0(\lambda), c_n(\lambda)$ are defined in \eqref{c_0_c_n}.
	\begin{enumerate}	
		\item If $c>c_d(\lambda)$, then $D_n(\lambda,c)>0$ for all $n\in \mathbb{N}_0$. Moreover, there exists $\lambda_d\in(0,1/\beta)$ such that
		$c'_d(\lambda_d)=0$, $c'_d(\lambda)>0$ for $\lambda\in(0, \lambda_d)$,
		and $c'_d(\lambda)<0$ for $\lambda\in(\lambda_d, 1/\beta)$.
		
		\item Let $\lambda_p =\sqrt{\frac{1+\beta}{\beta}}-1$. Then $p'(\lambda_p)=0$, $p'(\lambda)>0$
		for $\lambda\in(0,\lambda_p)$,  and $p'(\lambda)<0$.
		for $\lambda\in(\lambda_p, 1/\beta)$.
		
		\item If $\beta\leq 1$, then there exists $\lambda_0=\sqrt{\frac{1+\beta}{2\beta}}-1\in(0,\frac{1-\beta}{2\beta})$
		such that $c'_0(\lambda_0)=0$, $c'_0(\lambda)>0$ for $\lambda \in(0, \lambda_0)$, and  $c'_0(\lambda)<0$
		for $\lambda \in(\lambda_0,1/\beta)$. Moreover, $c_0(\lambda)>0$ for $\lambda\in(0, \frac{1-\beta}{2\beta})$, and
		$c_0(\lambda)<0$ for $\lambda\in(\frac{1-\beta}{2\beta}, \frac{1}{\beta})$.
	\end{enumerate}
\end{lemma}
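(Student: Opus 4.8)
The plan is to treat the three items separately; each reduces to an elementary sign analysis of a rational function of $\lambda$ on the interval $(0,1/\beta)$, on which $1-\beta\lambda>0$. I begin with item (2), which is pure one–variable calculus. Differentiating $p$ in \eqref{c_0_c_n} gives $p'(\lambda)=\bigl(1-2\beta\lambda-\beta\lambda^2\bigr)/(1+\lambda)^2$, so the sign of $p'$ is that of the downward quadratic $1-2\beta\lambda-\beta\lambda^2$. Its unique positive root is exactly $\lambda_p=\sqrt{(1+\beta)/\beta}-1$, and the containment $\lambda_p<1/\beta$ reduces to the inequality $m^{1/2}<m$ for $m=(1+\beta)/\beta>1$; the asserted up–then–down sign pattern of $p'$ then follows, since a downward quadratic is positive to the left of its positive root and negative to the right.

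For item (3) I would first rewrite $c_0=p(\lambda)-\beta\lambda$ in the factored form $c_0(\lambda)=\lambda(1-\beta-2\beta\lambda)/(1+\lambda)$, from which the sign statement is immediate: on $(0,1/\beta)$ the prefactor $\lambda/(1+\lambda)$ is positive, so $c_0$ inherits the sign of $1-\beta-2\beta\lambda$, which is positive exactly for $\lambda<(1-\beta)/(2\beta)$. Differentiating yields $c_0'(\lambda)=\bigl((1-\beta)-4\beta\lambda-2\beta\lambda^2\bigr)/(1+\lambda)^2$, whose numerator is again a downward quadratic with unique positive root $\lambda_0=\sqrt{(1+\beta)/(2\beta)}-1$. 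Here the hypothesis $\beta\le 1$ is precisely what makes $(1+\beta)/(2\beta)>1$, hence $\lambda_0>0$, and the containment $\lambda_0<(1-\beta)/(2\beta)$ again reduces to $\sqrt{k}<k$ with $k=(1+\beta)/(2\beta)>1$.

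Item (1) splits into a monotonicity claim and a positivity claim. For the monotonicity of $c_d$, the cleanest route is the substitution $w=\sqrt{1+\lambda}$, under which $1-\beta\lambda=1+\beta-\beta w^2$ and the derivative factors as $c_d'(\lambda)=\tfrac{d_2}{d_1}\tfrac{w-1}{w^4}\bigl(1+\beta-\beta w^3\bigr)$; since $(w-1)/w^4>0$ on $(0,1/\beta)$, the sign of $c_d'$ is that of $1+\beta-\beta(1+\lambda)^{3/2}$, producing the single critical point $\lambda_d=\bigl((1+\beta)/\beta\bigr)^{2/3}-1\in(0,1/\beta)$ together with the required behaviour. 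For the positivity claim I note first that $D_0=c\bigl(1+\beta\lambda^2\bigr)/(1+\lambda)>0$ automatically, and then for $n\ge 1$ I view $D_n$ in \eqref{T_n_D_n} as the quadratic $q(t)=d_1d_2\,t^2+\bigl(d_1c-d_2p(\lambda)\bigr)t+cs$ in $t=n^2/\ell^2>0$, with $s=(1-\beta\lambda)/(1+\lambda)>0$ and $p(\lambda)=\lambda s$. Solving the discriminant equation $(d_1c-d_2\lambda s)^2=4d_1d_2cs$ for $c$ gives the two values $\tfrac{d_2s}{d_1}\bigl(\sqrt{1+\lambda}\pm 1\bigr)^2$, the smaller of which is exactly $c_d(\lambda)$.

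I expect the positivity half of item (1) to be the only genuine obstacle, because for $c>c_d$ one must rule out a positive root of $q$ not merely near the threshold but for every such $c$, including the regime above the larger discriminant root, where $q$ again has real roots. The transparent way to close this is the equivalent formulation $D_n>0\iff c>d_2\,t(\lambda s-d_1 t)/(d_1 t+s)$, whose right-hand side has supremum over $t>0$ equal to $c_d(\lambda)$; alternatively, for $c$ above the larger root one checks directly that both roots of $q$ are negative, since their product $cs/(d_1d_2)$ is positive while their sum $(d_2\lambda s-d_1c)/(d_1d_2)$ is negative. Once this is settled, the remaining assertions are routine.
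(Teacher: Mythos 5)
The paper does not actually prove this lemma --- it declares the proof ``trivial'' and omits it --- so there is no paper argument to compare against; the question is only whether your proof is correct, and it is. Your computations for items (2) and (3) check out: $p'(\lambda)=(1-2\beta\lambda-\beta\lambda^2)/(1+\lambda)^2$, the factorization $c_0(\lambda)=\lambda(1-\beta-2\beta\lambda)/(1+\lambda)$, and $c_0'(\lambda)=\bigl((1-\beta)-4\beta\lambda-2\beta\lambda^2\bigr)/(1+\lambda)^2$ are all correct, as are the roots $\lambda_p$, $\lambda_0$ and the reduction of the containments to $\sqrt{m}<m$ for $m>1$. In item (1), the substitution $w=\sqrt{1+\lambda}$ indeed gives $c_d'(\lambda)=\tfrac{d_2}{d_1}\tfrac{w-1}{w^4}(1+\beta-\beta w^3)$, hence $\lambda_d=((1+\beta)/\beta)^{2/3}-1\in(0,1/\beta)$. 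More importantly, you correctly isolate the one place where ``trivial'' hides actual content: for $n\ge 1$, writing $D_n=q(t)=d_1d_2t^2+(d_1c-d_2\lambda s)t+cs$ with $t=n^2/\ell^2$, positivity must hold for \emph{every} $c>c_d(\lambda)$, not just for $c$ in the band where the discriminant is negative. Your identification of the discriminant roots in $c$ as $\tfrac{d_2s}{d_1}(\sqrt{1+\lambda}\pm 1)^2$, with the smaller equal to $c_d(\lambda)$, is correct, and for $c$ at or above the larger root the sum/product argument closes the case: the product of the roots of $q$ is $cs/(d_1d_2)>0$ and the sum is $(d_2\lambda s-d_1c)/(d_1d_2)<0$, the latter because the larger discriminant root already exceeds $d_2\lambda s/d_1$ (as $(\sqrt{1+\lambda}+1)^2>\lambda$), so both roots are negative and $q>0$ on $t>0$. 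Your alternative formulation is also valid: $D_n>0$ iff $c>d_2t(\lambda s-d_1t)/(d_1t+s)$, and this rational function attains its maximum $c_d(\lambda)$ at $t=s(\sqrt{1+\lambda}-1)/d_1$, which is arguably the cleanest route. One pedantic caveat you inherit from the paper's own statement: for $\beta=1$ one gets $(1+\beta)/(2\beta)=1$, so $\lambda_0=0$ and the interval $(0,(1-\beta)/(2\beta))$ is empty; item (3) really requires $\beta<1$ (consistent with hypothesis (H) used later), and your parenthetical claim that $\beta\le 1$ forces $(1+\beta)/(2\beta)>1$ should be stated with strict inequality.
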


The existence of spatially nonhomogeneous Hopf bifurcation was
studied by Chen {\it et al.} in \cite{ChenW-2018}, here we
state the main results below without proof.

\begin{theorem}\it{[Theorem 2, \cite{ChenW-2018}]}\label{th_1_chen}
	Suppose that $c>c_d(\lambda_d)$, $\beta \geq 1$, and define
	\begin{equation}\label{l_n}
	\ell_n= n\sqrt{\cfrac{d_1+d_2}{p(\lambda_p)-c}}, ~~if~ p(\lambda_p)>c,
	\end{equation}
	where $p(\lambda)$, $\lambda_p$ are defined as Eq.\eqref{c_0_c_n} and Lemma \ref{lemma_p_c_properties}.
	Then the following two statements are true.
	\begin{enumerate}
		\item[$(i)$] If $c\geq p(\lambda_p)$, or $c<p(\lambda_p)$ but $\ell\in(0, \ell_1)$, then $(\lambda, \lambda)$ is
		locally asymptotically stable for $\lambda\in(0, 1/\beta)$.
		\item[$(ii)$] If $c<p(\lambda_p)$ and $\ell\in(\ell_n, \ell_{n+1}]$, then there exist two sequences $\{\lambda_{j,-}^H\}$
		and $\{\lambda_{j,+}^H\}$, $1\leq j \leq n$, such that
		\begin{equation}\label{T_j}
		T_j(\lambda_{j,-}^H)=T_j(\lambda_{j,+}^H)=0 ~and~  T_i(\lambda_{j,\pm}^H)\neq 0 ~for~ i\neq j,
		\end{equation}
		and these points satisfy
		\begin{equation}\label{sqs_lambda}
		0<\lambda_{1,-}^H<\lambda_{2,-}^H<\cdots <\lambda_{n,-}^H<\lambda_p<\lambda_{n,+}^H<\cdots <
		\lambda_{2,+}^H<\lambda_{1,+}^H<1/\beta,
		\end{equation}
		where $T_j(\lambda)$ is defined as in Eq.\eqref{T_n_D_n}, such that $(\lambda, \lambda)$ is locally asymptotically stable
		for $\lambda \in(0, \lambda_{1,-}^H)\cup (\lambda_{1,+}^H, 1/\beta)$ and unstable for $\lambda\in(\lambda_{1,-}^H, \lambda_{1,+}^H)$. Moreover, system \eqref{model_holling_tanner} undergoes Hopf bifurcation at $(\lambda, \lambda)$ when
		$\lambda= \lambda_{j,\pm}^H$, $1\leq j \leq n$, and the bifurcation periodic solutions near $\lambda_{j,\pm}^H$
		are spatially nonhomogeneous.
	\end{enumerate}
\end{theorem}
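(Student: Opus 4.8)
The plan is to reduce the stability question to a sign analysis of the trace and determinant of each Fourier mode, and then to track where a single mode loses stability through a pair of purely imaginary roots. Since each mode satisfies the quadratic \eqref{CE_model}, the Routh--Hurwitz criterion tells me that $(\lambda,\lambda)$ is locally asymptotically stable precisely when $T_n(\lambda,c)<0$ and $D_n(\lambda,c)>0$ for every $n\in\mathbb{N}_0$. First I would dispatch the determinant condition: because $\lambda_d$ is the global maximizer of $c_d$ (Lemma \ref{lemma_p_c_properties}(1)) and $c>c_d(\lambda_d)$, we have $c>c_d(\lambda)$ for all $\lambda$, whence $D_n>0$ for all $n$ and all $\lambda\in(0,1/\beta)$. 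Next, using $\beta\ge1$ I would show $T_0<0$ automatically: the factor $1-\beta-2\beta\lambda<0$ for $\lambda>0$, so the second term in \eqref{T_0_D_0} is negative and $T_0$ is a sum of negative quantities. This leaves only the modes $n\ge1$, where $T_n=c_n(\lambda)-c$ with $c_n(\lambda)=p(\lambda)-\tfrac{(d_1+d_2)n^2}{\ell^2}$ strictly decreasing in $n$; hence $\max_{n\ge1}c_n(\lambda)=c_1(\lambda)$ and the stability threshold is governed entirely by the sign of $c_1(\lambda)-c$.

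For part (i) I would argue directly. If $c\ge p(\lambda_p)$, then $c_1(\lambda)\le p(\lambda_p)-\tfrac{d_1+d_2}{\ell^2}<p(\lambda_p)\le c$ for every $\lambda$, using that $\lambda_p$ maximizes $p$ (Lemma \ref{lemma_p_c_properties}(2)); if instead $c<p(\lambda_p)$ but $\ell\in(0,\ell_1)$, the definition \eqref{l_n} of $\ell_1$ rearranges to $\tfrac{d_1+d_2}{\ell^2}>p(\lambda_p)-c$, so again $c_1(\lambda)\le p(\lambda_p)-\tfrac{d_1+d_2}{\ell^2}<c$. In both cases all $T_n<0$ and all $D_n>0$, giving stability for every $\lambda\in(0,1/\beta)$.

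For part (ii), the hypothesis $\ell\in(\ell_n,\ell_{n+1}]$ rewrites, via \eqref{l_n}, as $c_{n+1}(\lambda_p)\le c<c_n(\lambda_p)$, so $c_j(\lambda_p)>c$ for each $1\le j\le n$. Since $p$ is strictly unimodal on $(0,1/\beta)$ with $p(0^+)=p((1/\beta)^-)=0$ (Lemma \ref{lemma_p_c_properties}(2)), each $c_j$ has the same shape, exceeds $c$ at its peak $\lambda_p$, and is negative near the endpoints; the intermediate value theorem together with strict monotonicity on each side of $\lambda_p$ then yields exactly two roots $\lambda_{j,-}^H<\lambda_p<\lambda_{j,+}^H$ of $c_j(\lambda)=c$, i.e. of $T_j=0$. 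The nesting \eqref{sqs_lambda} follows because $c_j=p-\tfrac{(d_1+d_2)j^2}{\ell^2}$ is a downward vertical shift increasing with $j$: on the increasing branch a larger shift forces the root rightward, and on the decreasing branch leftward. Distinctness of the constants $\tfrac{(d_1+d_2)i^2}{\ell^2}$ gives $c_i(\lambda_{j,\pm}^H)\ne c$ for $i\ne j$, which is exactly \eqref{T_j}; and the sign of $c_1(\lambda)-c$ produces instability on $(\lambda_{1,-}^H,\lambda_{1,+}^H)$ and stability on its complement in $(0,1/\beta)$.

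It remains to confirm that each $\lambda_{j,\pm}^H$ is a genuine Hopf point. At such a value $T_j=0$ and $D_j>0$, so mode $j$ contributes the simple conjugate pair $\pm i\sqrt{D_j}$ while every other mode stays off the imaginary axis (by $T_0<0$, $D_n>0$, and $T_i\ne0$ for $i\ne j$). For transversality I would note that near the crossing the real part of the conjugate pair equals $T_j/2$, so $\tfrac{d}{d\lambda}\operatorname{Re}\eta=\tfrac12 c_j'(\lambda)=\tfrac12 p'(\lambda)$, which is nonzero at $\lambda_{j,\pm}^H$ because $p'$ vanishes only at $\lambda_p$ and these roots are strictly separated from $\lambda_p$. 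Finally, the crossing eigenfunction is proportional to $\cos(jx/\ell)$ with $j\ge1$, so the emerging periodic orbits are spatially nonhomogeneous. I expect the main obstacle to be the bookkeeping in part (ii): coordinating the unimodality and endpoint behavior of $p$ to guarantee \emph{exactly} two crossings per admissible mode, to establish the precise ordering \eqref{sqs_lambda}, and to rule out any spurious imaginary eigenvalue; the transversality check is the technical crux, but it follows cleanly once $p'(\lambda)\ne0$ away from $\lambda_p$ is in hand.
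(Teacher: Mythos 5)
Note first that there is nothing in this paper to compare your argument against: Theorem \ref{th_1_chen} is imported verbatim from \cite{ChenW-2018}, and the paper says explicitly that the results of that reference are ``stated \dots without proof.'' So your proposal can only be judged on its own merits and against the standard analysis such results rest on.

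On those terms your proof is correct, and it follows the natural route (which is also the one the cited source is built on): Routh--Hurwitz mode by mode; $c>c_d(\lambda_d)=\max_{(0,1/\beta)}c_d$ disposing of the determinant condition via Lemma \ref{lemma_p_c_properties}(1); $\beta\ge 1$ forcing $1-\beta-2\beta\lambda<0$ and hence $T_0<0$, which is exactly where the hypothesis $\beta\ge1$ enters and why only nonhomogeneous modes can destabilize; the identity $T_j=c_j(\lambda)-c$ with $c_j$ a downward vertical shift of the unimodal $p$, vanishing at both endpoints of $(0,1/\beta)$, so that the intermediate value theorem plus strict monotonicity on each side of $\lambda_p$ gives exactly two crossings per mode $1\le j\le n$, the interlacing \eqref{sqs_lambda}, and $T_i(\lambda_{j,\pm}^H)\ne 0$ for $i\ne j$ (with $i=0$ covered separately by $T_0<0$, as you note); and transversality $\frac{d}{d\lambda}\mathrm{Re}\,\eta=\frac{1}{2}p'(\lambda)\ne 0$ because $\lambda_{j,\pm}^H\ne\lambda_p$. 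Your translation of $\ell\in(\ell_n,\ell_{n+1}]$ into $c_{n+1}(\lambda_p)\le c<c_n(\lambda_p)$ is also right, and it correctly handles the boundary case $\ell=\ell_{n+1}$, where $T_{n+1}\le 0$ with equality only at the tangential touch $\lambda=\lambda_p$, which conflicts with none of the claims. Two points you leave implicit deserve a sentence each in a full write-up, though neither is a substantive gap: (a) linearized stability of the full nonlocal PDE requires the supremum of the spectral real parts over all modes to be negative, which holds here because $T_n\to-\infty$ and $D_n\to+\infty$ force the largest real part of mode $n$ to $-\infty$, so only finitely many modes matter; (b) the conclusion ``Hopf bifurcation occurs with spatially nonhomogeneous bifurcating solutions'' invokes the Hopf bifurcation theorem for this class of nonlocal reaction--diffusion systems, whose spectral hypotheses (simple purely imaginary pair, no other critical spectrum, transversality) are precisely what you verified, the nonhomogeneity coming from the $\cos(jx/\ell)$, $j\ge1$, eigenfunction as you say.
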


\begin{theorem}\it{[Theorem 3, \cite{ChenW-2018}]}\label{th_2_chen}
	Suppose that $\beta<1$, $c>\max\big\{c_d(\lambda_d), c_0(\lambda_0)\big\}$ with $c_d, c_0, \lambda_d, \lambda_0$ are
	defined as in Eq.\eqref{c_0_c_n} and Lemma \ref{lemma_p_c_properties}  respectively, and $\ell_n$ is defined as
	in Eq.\eqref{l_n}. Then the following two statements
	are true.
	\begin{enumerate}
		\item[$(i)$] If $c\geq p(\lambda_p)$, or $c_0(\lambda_0)<c<p(\lambda_p)$ but $\ell\in(0, \ell_1)$, then $(\lambda, \lambda)$ is
		locally asymptotically stable for $\lambda\in(0, 1/\beta)$.
		\item[$(ii)$] If $c<p(\lambda_p)$ and $\ell\in(\ell_n, \ell_{n+1}]$, then $(\lambda, \lambda)$ is locally asymptotically stable
		for $\lambda \in(0, \lambda_{1,-}^H)\cup (\lambda_{1,+}^H, 1/\beta)$ and unstable for $\lambda\in(\lambda_{1,-}^H, \lambda_{1,+}^H)$. Moreover, system \eqref{model_holling_tanner} undergoes Hopf bifurcation at $(\lambda, \lambda)$ when
		$\lambda= \lambda_{j,\pm}^H$, $1\leq j \leq n$, and the bifurcation periodic solutions near $\lambda_{j,\pm}^H$
		are spatially nonhomogeneous, where $\lambda_{j,\pm}^H$ are defined as in Eq.\eqref{T_j} and Eq.\eqref{sqs_lambda}.
	\end{enumerate}
\end{theorem}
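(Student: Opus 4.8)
The plan is to reduce the whole question to the scalar quadratics \eqref{CE_model} mode by mode. By the Routh--Hurwitz criterion, all roots $\eta$ of $\eta^2-T_n(\lambda,c)\eta+D_n(\lambda,c)=0$ have negative real parts if and only if $T_n(\lambda,c)<0$ and $D_n(\lambda,c)>0$; hence $(\lambda,\lambda)$ is locally asymptotically stable precisely when these two inequalities hold for every $n\in\mathbb{N}_0$, and a purely imaginary pair appears exactly when $T_n=0$ while $D_n>0$. First I would dispose of the determinant condition: since $c>c_d(\lambda_d)=\max_{\lambda}c_d(\lambda)$ by part~1 of Lemma~\ref{lemma_p_c_properties}, we have $c>c_d(\lambda)$ for all $\lambda\in(0,1/\beta)$, so $D_n(\lambda,c)>0$ for every $n\ge1$, while $D_0>0$ holds trivially because $1-\beta\lambda>0$ on $(0,1/\beta)$. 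Thus stability is governed entirely by the signs of the traces $T_n$.

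Next I would exploit the algebraic identities $T_0(\lambda,c)=c_0(\lambda)-c$ and, for $n\ge1$, $T_n(\lambda,c)=c_n(\lambda)-c=p(\lambda)-(d_1+d_2)n^2/\ell^2-c$, which follow by comparing \eqref{T_0_D_0} and \eqref{T_n_D_n} with \eqref{c_0_c_n}. The hypothesis $c>c_0(\lambda_0)=\max_\lambda c_0(\lambda)$ (part~3 of the Lemma) then forces $T_0<0$ for all $\lambda$, so the spatially homogeneous mode never destabilizes; this is exactly why every bifurcation produced below is spatially \emph{non}homogeneous. For $n\ge1$, since $c_n$ is strictly decreasing in $n$ and each $c_n(\cdot)$ is unimodal with maximum at $\lambda_p$ (inherited from $p$, part~2 of the Lemma), the binding constraint is $n=1$: all $T_n<0$ iff $T_1<0$ iff $c>c_1(\lambda_p)=p(\lambda_p)-(d_1+d_2)/\ell^2$. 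Part~(i) follows immediately: if $c\ge p(\lambda_p)$ then $c\ge p(\lambda)>c_1(\lambda)$ for every $\lambda$; and if $c<p(\lambda_p)$ but $\ell<\ell_1$ then, by the definition \eqref{l_n} of $\ell_1$, $(d_1+d_2)/\ell^2>p(\lambda_p)-c$, again giving $T_1<0$ throughout. Either way every trace is negative and $(\lambda,\lambda)$ is stable on all of $(0,1/\beta)$.

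For part~(ii), with $c<p(\lambda_p)$ and $\ell\in(\ell_n,\ell_{n+1}]$, I would solve $T_j(\lambda)=0$, i.e. $p(\lambda)=c+(d_1+d_2)j^2/\ell^2$, for each $j$. Because $p$ increases on $(0,\lambda_p)$ and decreases on $(\lambda_p,1/\beta)$ with $p\to0$ at both endpoints, this equation has exactly two roots $\lambda_{j,-}^H<\lambda_p<\lambda_{j,+}^H$ when its right-hand side is below $p(\lambda_p)$ (equivalently $\ell>\ell_j$) and none otherwise; the constraint $\ell\in(\ell_n,\ell_{n+1}]$ therefore selects precisely $j=1,\dots,n$. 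As $j$ grows the right-hand side increases, so the left roots $\lambda_{j,-}^H$ increase and the right roots $\lambda_{j,+}^H$ decrease, yielding the interlacing \eqref{sqs_lambda}. Consequently the instability set $\{\lambda:\ \exists\,j,\ T_j(\lambda)>0\}$ equals the largest interval $(\lambda_{1,-}^H,\lambda_{1,+}^H)$, giving stability on $(0,\lambda_{1,-}^H)\cup(\lambda_{1,+}^H,1/\beta)$.

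Finally, to confirm genuine Hopf bifurcations at $\lambda=\lambda_{j,\pm}^H$ I would check the eigenvalue hypotheses: there $T_j=0$ and $D_j>0$ give a purely imaginary pair $\pm i\sqrt{D_j}$, its simplicity and the absence of other imaginary eigenvalues being guaranteed by the nonresonance relations $T_i(\lambda_{j,\pm}^H)\neq0$ for $i\neq j$ in \eqref{T_j}; transversality holds because $\frac{d}{d\lambda}\mathrm{Re}\,\eta=\tfrac12 T_j'(\lambda)=\tfrac12 p'(\lambda)$, which is nonzero at $\lambda_{j,\pm}^H$ since these points lie strictly off the critical value $\lambda_p$ where $p'$ vanishes. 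As the crossing mode has index $j\ge1$, its eigenfunction is $\cos(jx/\ell)$ and the emerging periodic orbits are spatially nonhomogeneous. I expect the main obstacle to be the bookkeeping in part~(ii): rigorously establishing the two-sided root count and the exact interlacing \eqref{sqs_lambda}, and thereby identifying the instability region with the single $j=1$ interval, together with securing the nonresonance condition \eqref{T_j} for the relevant parameters so that the Hopf bifurcation theorem applies cleanly.
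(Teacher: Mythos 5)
Your proof is correct, but there is nothing in the paper to compare it against: Theorem \ref{th_2_chen} is quoted verbatim from \cite{ChenW-2018} and the paper explicitly states the result ``without proof.'' That said, your argument is essentially the analysis that the paper's Section 4.1 setup is designed for, and it mirrors the paper's own proof of Theorem \ref{th_3_hopf-hopf}: Routh--Hurwitz applied mode by mode to \eqref{CE_model}, the determinant condition disposed of by $c>c_d(\lambda_d)=\max_\lambda c_d(\lambda)$ (Lemma \ref{lemma_p_c_properties}(1)), the identities $T_0(\lambda,c)=c_0(\lambda)-c$ and $T_n(\lambda,c)=c_n(\lambda)-c$, unimodality of $p$ to get exactly two roots $\lambda_{j,\pm}^H$ of $T_j=0$ when $\ell>\ell_j$, monotonicity of $T_j$ in $j$ to identify the instability set with the single interval $(\lambda_{1,-}^H,\lambda_{1,+}^H)$ and to verify simplicity ($T_i(\lambda_{j,\pm}^H)=-(d_1+d_2)(i^2-j^2)/\ell^2\neq 0$ for $i\neq j$, and $T_0<0$ since $c>c_0(\lambda_0)$), and transversality from $\frac{d}{d\lambda}\mathrm{Re}\,\eta=\tfrac12 p'(\lambda)\neq 0$ off $\lambda_p$. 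Two minor points you should tighten if writing this out in full: (a) local asymptotic stability of the PDE equilibrium needs the real parts of the full spectrum to be uniformly bounded away from zero, not merely negative for each mode; this is routine here because $T_n\to-\infty$ and $D_n\to+\infty$ as $n\to\infty$, but it deserves a sentence. (b) In the boundary case $\ell=\ell_{n+1}$ one has $T_{n+1}(\lambda_p)=0$, a tangential (non-transversal) touching; this is harmless for the stated conclusions since $\lambda_p$ lies inside the unstable interval $(\lambda_{1,-}^H,\lambda_{1,+}^H)$, but it should be noted so that the Hopf bifurcation claim is only asserted at $\lambda_{j,\pm}^H$, $1\leq j\leq n$.
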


	Note that, if $c<p(\lambda_p)$, the large
	scale $\ell$ is always accompanied by nonhomogeneous Hopf bifurcation, but the corresponding branch curve will not intersect in the parameter plane.
	Therefore, the double Hopf point can only be the interaction of spatially homogeneous and  nonhomogeneous Hopf branches.
	Hence we consider the case
$$
\textsc{(H)}~~~\qquad~~~ 0<\beta<1,~~c_d(\lambda_d)<c<c_0(\lambda_0).$$
In this premise, we can restrict the parameter space to a rectangular region, namely,
\begin{equation}\label{R_tg}
R_{tg}= \Big\{(\lambda, c): 0<\lambda<1/\beta, c_d(\lambda_d)<c<c_0(\lambda_0) ~\text{with} ~0<\beta <1\Big\},
\end{equation}
and for fixed $c_d(\lambda_d)<c<c_0(\lambda_0)$, there exist two points $\lambda_{0,-}^H$, $\lambda_{0,+}^H\in (0,1/\beta)$
such that
\begin{equation}\label{T_0}
T_0(\lambda_{0,\pm}^H)=0 ~\text{and}~~
 \lambda_{0,-}^H<\lambda_0<\lambda_{0,+}^H.
\end{equation}
Then we have the following result.
\begin{theorem}\label{th_3_hopf-hopf}
	Let $\ell_*^2 = \frac{2(d_1+d_2)}{1-\beta}$ with $d_1>0, d_2>0$, $0<\beta<1$. Then the following statements hold.
	\begin{enumerate}
		\item[$(i)$]  If $\ell^2<\ell_*^2 $, then the positive
		steady state $(\lambda, \lambda)$ is locally asymptotically stable when $(\lambda, c)\in R_0$ and unstable
		when $(\lambda, c)\in R_{tg}\setminus R_0$, where $R_{tg}$ is defined as in \eqref{R_tg}, and $R_0$ is defined by
		\begin{equation}\label{R_0}
		R_{0}= \Big\{(\lambda, c): 0<\lambda<1/\beta, \tilde{c}_0(\lambda)<c<c_0(\lambda_0) ~\text{with} ~0<\beta <1\Big\},
		\end{equation}
		where $\tilde{c}_n(\lambda)=\max \big\{c_n(\lambda), c_d(\lambda_d)\big\}$, $n\in \mathbb{N}_0$.
		\item[$(ii)$] If $\ell^2>\ell_*^2 $, then there exist  a positive integer $N^*$ and a
		sequence $\{\lambda_{0,n}^{^{HH}}\}_{1\leq n\leq N^*}$, where $\lambda_{0,n}^{^{HH}}=\frac{d_1+d_2}{\beta \ell^2}n^2 \in(0, \frac{1-\beta}{2\beta})$ satisfies
		$$ T_0(\lambda_{0,n}^{^{HH}}, c)=T_n(\lambda_{0,n}^{^{HH}}, c), $$
		and
		$$T_0(\lambda,c)>T_n(\lambda,c)~\text{for}~\lambda\in(0, \lambda_{0,n}^{^{HH}}),~~
		T_0(\lambda,c)<T_n(\lambda,c)~\text{for}~\lambda\in(\lambda_{0,n}^{^{HH}}, \frac{1-\beta}{2\beta}).$$
		Moreover, the positive steady state $(\lambda, \lambda)$ is locally asymptotically stable when $(\lambda, c)\in R_0\bigcap R_1$
		and unstable when  $(\lambda, c)\in R_{tg}\setminus\big( R_0\bigcap R_1\big)$, where $R_{tg}$, $R_0$ are defined as in \eqref{R_tg}
		and \eqref{R_0} respectively, and $R_1$ is defined by
		\begin{equation}\label{R_1}
		R_{1}= \Big\{(\lambda, c) :  0<\lambda<1/\beta, \tilde{c}_1(\lambda)<c<c_0(\lambda_0) ~\text{with} ~0<\beta <1\Big\}.
		\end{equation}
	\end{enumerate}
\end{theorem}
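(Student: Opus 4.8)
The plan is to reduce the stability question to the Routh--Hurwitz criterion applied mode by mode. Since the spectrum of the linearization is the union over $n\in\mathbb{N}_0$ of the roots of $\eta^2-T_n(\lambda,c)\eta+D_n(\lambda,c)=0$, the steady state $(\lambda,\lambda)$ is locally asymptotically stable if and only if $T_n(\lambda,c)<0$ and $D_n(\lambda,c)>0$ for every $n\in\mathbb{N}_0$. Under hypothesis \textsc{(H)} we have $c>c_d(\lambda_d)$, and since $\lambda_d$ is the global maximizer of $c_d$ by Lemma \ref{lemma_p_c_properties}(1), it follows that $c>c_d(\lambda)$ for all $\lambda\in(0,1/\beta)$; the same lemma then yields $D_n(\lambda,c)>0$ for all $n$ throughout $R_{tg}$. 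Hence on $R_{tg}$ stability is equivalent to the single family of inequalities $T_n(\lambda,c)<0$, $n\in\mathbb{N}_0$.

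Next I would rewrite each $T_n$ in terms of the curves in \eqref{c_0_c_n}. A direct computation gives $T_0=c_0(\lambda)-c$ and, for $n\ge 1$, $T_n=c_n(\lambda)-c$, so that $T_n<0\iff c>c_n(\lambda)$. Because $c_n(\lambda)=p(\lambda)-\frac{(d_1+d_2)n^2}{\ell^2}$ is strictly decreasing in $n$, the binding constraint among the spatial modes is $n=1$, and therefore stability on $R_{tg}$ is equivalent to $c>\max\{c_0(\lambda),c_1(\lambda)\}$. Recalling $\tilde c_n=\max\{c_n,c_d(\lambda_d)\}$ and that $c>c_d(\lambda_d)$ on $R_{tg}$, this is exactly the condition $(\lambda,c)\in R_0\cap R_1$. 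Thus, before splitting into cases, the stability region always equals $R_0\cap R_1$, and the dichotomy in the theorem concerns whether the mode-$1$ constraint $R_1$ is redundant.

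To locate the threshold I would compare $c_0$ with $c_n$. From $T_0-T_n=\frac{(d_1+d_2)n^2}{\ell^2}-\beta\lambda$ one reads off that $T_0=T_n$ precisely at $\lambda=\lambda_{0,n}^{^{HH}}=\frac{(d_1+d_2)n^2}{\beta\ell^2}$, with $T_0>T_n$ for $\lambda<\lambda_{0,n}^{^{HH}}$ and $T_0<T_n$ for $\lambda>\lambda_{0,n}^{^{HH}}$, which is the explicit crossing assertion in part (ii). Since $c_0$ vanishes exactly at $\lambda=\frac{1-\beta}{2\beta}$ (Lemma \ref{lemma_p_c_properties}(3)), the crossing $\lambda_{0,1}^{^{HH}}$ lies in $\{c_0>0\}=(0,\frac{1-\beta}{2\beta})$ if and only if $\frac{d_1+d_2}{\beta\ell^2}<\frac{1-\beta}{2\beta}$, i.e. $\ell^2>\ell_*^2$. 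In case (ii) this places the genuine double-Hopf points $\lambda_{0,n}^{^{HH}}$ (those with $n^2<\ell^2/\ell_*^2$, which gives the finite index $N^*$) inside $R_{tg}$ where $c_1>c_0$, so both $R_0$ and $R_1$ are active and the stability region is $R_0\cap R_1$. In case (i), $\ell^2<\ell_*^2$, one has $\lambda_{0,1}^{^{HH}}>\frac{1-\beta}{2\beta}$, so wherever $c_0>c_d(\lambda_d)$ (forcing $\lambda<\frac{1-\beta}{2\beta}<\lambda_{0,1}^{^{HH}}$) we have $c_1<c_0$; I would then show $R_1\supseteq R_0$, collapsing $R_0\cap R_1$ to $R_0$.

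The main obstacle is precisely this last inclusion $R_0\subseteq R_1$ in case (i), equivalently the pointwise bound $c_1(\lambda)\le\max\{c_0(\lambda),c_d(\lambda_d)\}$ for all $\lambda\in(0,1/\beta)$. On $\{\lambda<\lambda_{0,1}^{^{HH}}\}$ this is immediate since $c_1<c_0$, but on $\{\lambda>\lambda_{0,1}^{^{HH}}\}$, where $c_1>c_0$ and $c_0<0$, one must verify $c_1(\lambda)\le c_d(\lambda_d)$. I would handle this through the estimate $c_1(\lambda)=p(\lambda)-\frac{d_1+d_2}{\ell^2}<p(\lambda)-\frac{1-\beta}{2}$ (using $\ell^2<\ell_*^2$) combined with the monotonicity and extremal properties of $p$, $c_0$ and $c_d$ from Lemma \ref{lemma_p_c_properties}, tracking the position of the peak $\lambda_p$ of $p$ relative to $\lambda_{0,1}^{^{HH}}$; this extremum bookkeeping, rather than any conceptual difficulty, is the delicate part. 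Once the inclusion is secured, the stability characterizations in (i) and (ii) follow, and the instability statements are simply their complements within $R_{tg}$.
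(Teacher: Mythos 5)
Your reduction and your crossing analysis are correct and are in substance the same as the paper's proof: the paper runs the identical computation in the form $S(\lambda,n)=c_0(\lambda)-c_n(\lambda)=-\beta\lambda+\frac{(d_1+d_2)n^2}{\ell^2}$, uses monotonicity in $\lambda$ and $n$ to locate $\min S$ at $\bigl(\frac{1-\beta}{2\beta},1\bigr)$, and reads off the threshold $\ell_*$, the crossings $\lambda_{0,n}^{^{HH}}$ and the count $N^*$, exactly as you do with $T_0-T_n$. Likewise, your observation that on $R_{tg}$ one has $D_n>0$ for all $n$ (from $c>c_d(\lambda_d)\ge c_d(\lambda)$ and Lemma \ref{lemma_p_c_properties}), so that stability is equivalent to $c>\max\{c_0(\lambda),c_1(\lambda)\}$ and hence the stable set inside $R_{tg}$ is always $R_0\cap R_1$, is the (implicit) backbone of the paper's argument; in particular your case (ii) is complete to the same extent as the paper's.

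The genuine gap is in case (i), at the step you single out but never carry out: the inclusion $R_0\subseteq R_1$, i.e.\ $c_1(\lambda)\le\max\{c_0(\lambda),c_d(\lambda_d)\}$ for every $\lambda\in(0,1/\beta)$. Your proposed route, the bound $c_1(\lambda)<p(\lambda)-\frac{1-\beta}{2}$ plus ``extremum bookkeeping,'' only closes the argument when $p$ is decreasing to the right of $\frac{1-\beta}{2\beta}$, i.e.\ when $\lambda_p\le\frac{1-\beta}{2\beta}$, which a direct computation shows is equivalent to $\beta\le 1/3$. For $\beta>1/3$ the inequality you need is not merely delicate — it can be false for admissible parameters. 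Take $\beta=0.5$, $d_1=1$, $d_2=0.01$, $\ell^2=4<\ell_*^2=4.04$: then $\lambda_{0,1}^{^{HH}}\approx 0.505<\lambda_p=\sqrt{3}-1\approx 0.732$, and $c_1(\lambda_p)=p(\lambda_p)-\frac{d_1+d_2}{\ell^2}\approx 0.268-0.2525\approx 0.015$, while $c_d(\lambda_d)\approx 4\times 10^{-4}$ and $c_0(\lambda_0)\approx 0.05$. Hence $(\lambda,c)=(\lambda_p,0.01)$ lies in $R_{tg}$ and in $R_0$ (since $c_0(\lambda_p)<0$ forces $\tilde c_0(\lambda_p)=c_d(\lambda_d)<c$), yet $T_1=c_1(\lambda_p)-c>0$ with $D_1>0$, so the steady state is unstable there. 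So the deferred step would fail as stated; it can only be rescued under an extra restriction such as $\beta\le 1/3$ (or a condition guaranteeing $c_1\le c_d(\lambda_d)$ beyond $\lambda_{0,1}^{^{HH}}$). For comparison, the paper's own proof never confronts this region at all: it defines $S$ only on $(0,\frac{1-\beta}{2\beta})$, proves $c_0>c_n$ there, and then asserts that ``the stable region is exactly $R_0$,'' silently ignoring $\lambda\ge\frac{1-\beta}{2\beta}$, which is precisely where the difficulty sits. You have therefore located a real soft spot of the published argument (and of statement (i) itself in the regime $\beta>1/3$), but since your proposal neither fills the gap nor shows it cannot be filled in general, it does not constitute a complete proof.
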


\begin{proof}
	 Denote $S(\lambda, n)=c_0(\lambda)-c_n(\lambda)$, namely,
	\begin{equation}\label{S_lamb_n}
	S(\lambda, n)=-\lambda \beta+\cfrac{d_1+d_2}{\ell^2}n^2, ~\text{for}~\lambda\in(0, \cfrac{1-\beta}{2\beta}), ~n\in \mathbb{N}.
	\end{equation}
	Clearly, $S(0,n)>0$, and
	if $\min S>0$, then $S(\lambda,n)>0$ for any $\lambda \in (0, \frac{1-\beta}{2\beta})$ and $n\in \mathbb{N}$. Note that $S'_{\lambda}(\lambda, n)<0$, and $S'_n(\lambda, n)>0$, hence we have
	$$\min S= S\left(\frac{1-\beta}{2\beta}, 1\right)=\cfrac{\beta-1}{2}+\cfrac{d_1+d_2}{\ell^2}.$$
    It is easy to verify that $\min S=0$ when $\ell^2=\ell_*^2$.

    If $\ell^2<\ell_*^2$, then $S(\lambda, n)>\min S>0$ for any $\lambda\in(0, \frac{1-\beta}{2\beta})$
	and $n\in \mathbb{N}$, which means that $c_0(\lambda)>c_n(\lambda)$. Thus, we have that the stable region is exactly $R_0$, which proves $(i)$.
	
	If $\ell^2>\ell_*^2$, then $S\left(\frac{1-\beta}{2\beta}, 1\right)<0$, and consequently, there exists a positive integer
	\begin{equation}\label{N^*}
	N^*=\begin{cases}
	\ell/\ell_*-1, ~~&\text{if}~~\ell/\ell_* ~\text{is a integer},\\
	\left\lfloor\ell/\ell_*\right\rfloor, ~~&\text{if}~~\ell/\ell_* ~\text{is not a integer},
	\end{cases}
	\end{equation}
	such that $S(\frac{1-\beta}{2\beta}, N^*)$ maximally equals to zero. Consequently for $1\leq n\leq N^*$, we have $S(\frac{1-\beta}{2\beta}, n)<0$, which together with the fact $S(0,n)>0$ yields that there exists
	a $\lambda_{0,n}^{^{HH}}\in (0, \frac{1-\beta}{2\beta})$
	such that $S(\lambda_{0,n}^{^{HH}}, n)=0$.
	The critical point
	$\lambda_{0,n}^{^{HH}}$ can be represented explicitly by
	\begin{equation}\label{lamd_HH}
	\lambda_{0,n}^{^{HH}}=\cfrac{d_1+d_2}{\beta \ell^2}n^2 \in(0, \cfrac{1-\beta}{2\beta}).
	\end{equation}
	
	Since $c_0(\lambda)>c_n(\lambda)$ when $0<\lambda<\lambda_{0,1}^{^{HH}}$ and $c_1(\lambda)>c_j(\lambda)$, $j\in N_0\setminus\{1\}$ when $\lambda_{0,1}^{^{HH}}<\lambda<1/\beta$, we can give the representation of the stable region in the $\lambda-c$ plane as follows
	$$ \Big\{(\lambda, c) : 0<\lambda<\lambda_{0,1}^{^{HH}}, \tilde{c}_0(\lambda)<c<c_0(\lambda_0) ~\text{and}
	~\lambda_{0,1}^{^{HH}}<\lambda<1/\beta, \tilde{c}_1(\lambda)<c<c_0(\lambda_0)\Big\},$$
	which is equivalent to $R_0\bigcap R_1$.
\end{proof}

\begin{remark}
		When $\ell^2>\ell_*^2$, we illustrate Theorem \ref{th_3_hopf-hopf} geometrically in Fig.\ref{fig-branch}. The intersection point
		$P=(\lambda_{0,1}^{^{HH}},c(\lambda_{0,1}^{^{HH}}))$ of $c_0(\lambda)$ and $c_1(\lambda)$ is a possible double Hopf bifurcation point. In our analyses to follow we shall be employing $(\lambda,c)$ as our bifurcation parameters and considering the dynamics of system \eqref{model_holling_tanner} near this point.
\end{remark}

\begin{remark}\label{remark_Turing}
	If $0<c<c_d(\lambda_d)$, then Turing bifurcation or even Turing-Hopf bifurcation may occur under some conditions, the boundary of stable region will become more complex and the system may exhibit rich dynamics near the Turing-Hopf bifurcation point. If the parameters are chosen properly, the coexistence of the spatially nonhomogeneous periodic solutions and spatially nonhomogeneous steady states can be observed \cite{AnJ-2018, ShenW-2019, SongJ-2017}.
\end{remark}
\begin{figure}[htp!]
	\centering
		\includegraphics[width=0.75\textwidth]{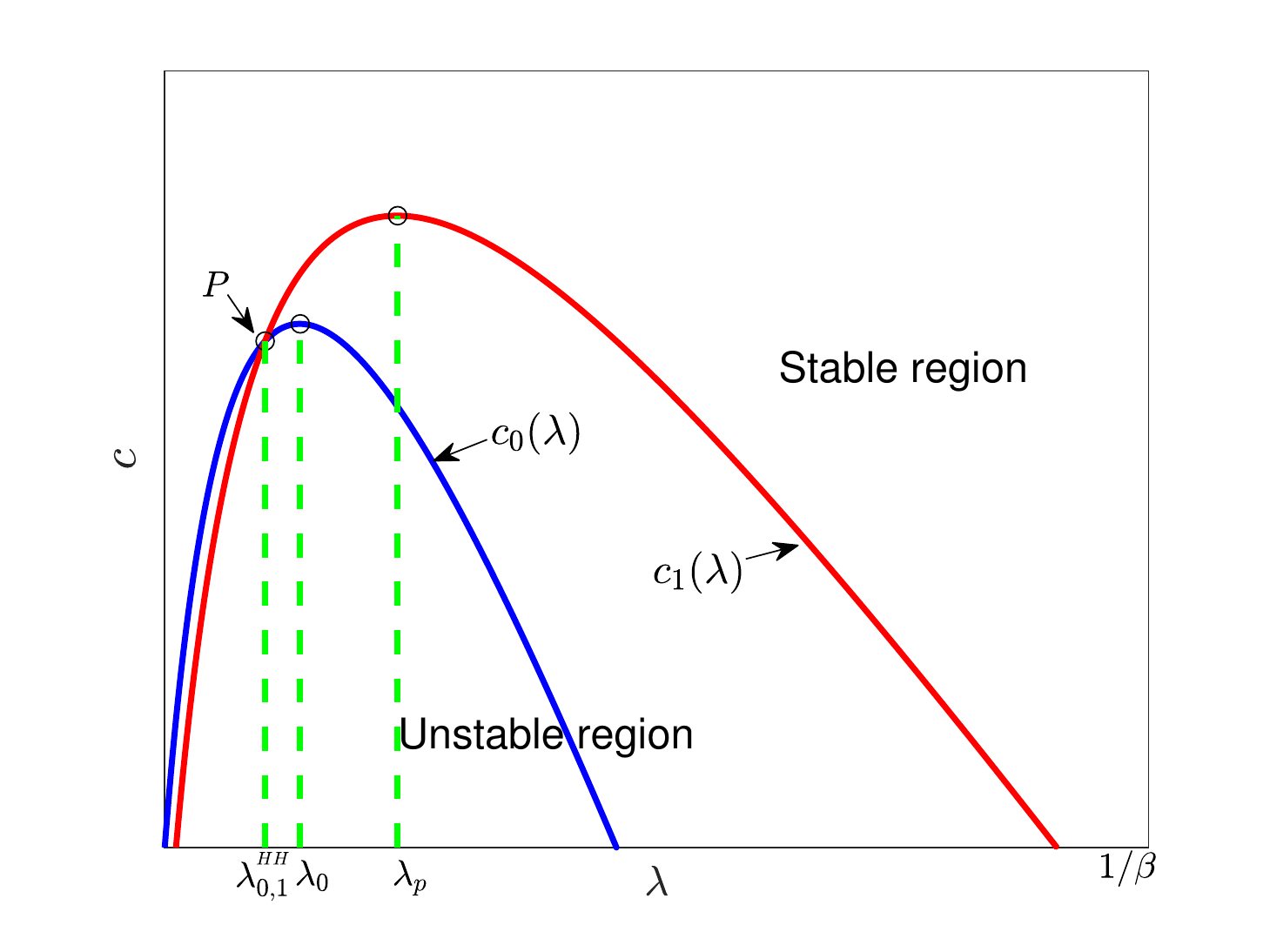}
	\caption{Stability region and bifurcation curves in $\lambda-c$ plane. The blue lines are Hopf bifurcation curves $c=c_0(\lambda)$, while the red, $c=c_1(\lambda)$. The values of parameters are chosen as follows: $d_1=0.6$, $d_2=0.2$, $\beta=0.1$, $\ell^2=8>1.7778=\ell^2_*$.}
	\label{fig-branch}       
\end{figure}

\subsection{Normal forms for double Hopf bifurcation}

It follows from Theorem \ref{th_3_hopf-hopf} that if $0<\beta<1 $ and $\ell^2>\ell^2_*$, then the spatially homogeneous Hopf bifurcation and spatially nonhomogeneous Hopf bifurcation may occur simultaneously. In this section, we shall calculate the normal forms on the center manifold to investigate the dynamics of system \eqref{model_holling_tanner} near the possible double-Hopf bifurcation singularity $(\lambda_0,c_0)=(\lambda_{0,1}^{^{HH}}, c_0(\lambda_{0,1}^{^{HH}}))$.

Let $(u_1(x,t), u_2(x,t))^{^{T}}=(u(x,t)-\lambda, v(x,t)-\lambda)^{^{T}}$, and $\mu=(\mu_1,\mu_2)$ with $\mu_1=\lambda-\lambda_0$, $\mu_2=c-c_0$, then system \eqref{model_holling_tanner} becomes

\begin{equation}\label{mdl_mu}
\cfrac{d U(t)}{dt}=D(\mu)\Delta U+L(\mu)U+\widehat L(\mu)\widehat{U}+F(U,\widehat{U},\mu).
\end{equation}
Consider the Taylor expansion
\begin{equation*}\begin{array}{ll}
&D(\mu)=D_0+\mu_1 D_1^{(1,0)}+\mu_2D_1^{(0,1)}+\cdots\\
&L(\mu)=L_0+\mu_1L_1^{(1,0)}+\mu_2 L_1^{(0,1)}+\cdots\\
&\widehat L(\mu)= \widehat L_0+\mu_1 \widehat L_1^{(1,0)}+\mu_2 \widehat L_1^{(0,1)}+\cdots
\end{array}
\end{equation*}
where $D_0=\mathrm{diag}(d_1,d_2)$, $ D_1^{(1,0)}= D_1^{(0,1)}=\textbf{0}$, $\widehat L_1^{(0,1)}=\textbf{0}$ and
\begin{equation}
\begin{array}{ll}
L_0= \Bigg(\begin{array}{cc}
\cfrac{\lambda_0(1-\beta\lambda_0)}{1+\lambda_0}~&-(1-\beta\lambda_0)
\\
c_0~&-c_0
\end{array}\Bigg),~
L_1^{(1,0)}=\Bigg(
\begin{array}{cc}
\cfrac{1-2\beta\lambda_0-\beta\lambda_0^2}{(1+\lambda_0)^2}
~&~\beta\\
0~&~0
\end{array}\Bigg),\\
L_1^{(0,1)}= \Bigg(\begin{array}{cc}
0 & 0\\
1 &-1
\end{array}\Bigg),~~~
\widehat L_0= \Bigg(\begin{array}{cc}
-\beta\lambda_0 & 0\\
0& 0
\end{array}\Bigg),~~~
\widehat L_1^{(1,0)}=\Bigg(\begin{array}{cc}
-\beta& 0\\
0& 0
\end{array}\Bigg).\\
\end{array}
\end{equation}
Then Eq.\eqref{mdl_mu} can be rewritten as
\begin{equation}\label{mdl_abstract}
\cfrac{d U(t)}{dt}=\mathscr{L}U+\widetilde{F}(U,\widehat{U},\mu).
\end{equation}
where
\begin{equation}\label{eq_F_mu}
\begin{array}{ll}
\mathscr{L}U= D_0 \Delta U+L_0 U+\widehat L_0 \widehat{U},\\ \widetilde{F}(U,\widehat{U},\mu)=(L(\mu)-L_0)U+(\widehat L(\mu)-\widehat L_0) U+F(U,\widehat{U},\mu).
\end{array}
\end{equation}

It follows from Section \ref{decomp_phase_space} that $\mathscr{L}$ and its adjoint $\mathscr{L}^*$ have two pairs of purely imaginary roots $\pm i\omega_1$ and $\pm i\omega_2$ with
\begin{equation}\label{omega}
\omega_1=\sqrt{D_0(\lambda_0, c_0)}~~\text{and}~~\omega_2=\sqrt{D_1(\lambda_0, c_0)}
\end{equation}
and other eigenvalues have negative real parts.

Denote $\{\phi_1 \xi_{n_1}, \bar{\phi}_1 \xi_{n_1}, \phi_2 \xi_{n_2}, \bar{\phi}_2 \xi_{n_2}\}$ and $\{\psi_1 \xi_{n_1}, \bar{\psi}_1 \xi_{n_1}, \psi_2 \xi_{n_2}, \bar{\psi}_2 \xi_{n_2}\}$ the eigenfunctions of $\mathscr{L}$ and its dual $\mathscr{L}^*$ relative to $\Lambda=\{\pm i\omega_1, \pm i\omega_2\}$ with
 $n_1=0$, $n_2=1$ such that
\begin{equation*}
\begin{array}{ll}
\mathscr{L}\phi_j \xi_{n_j}=i\omega_j \phi_j \xi_{n_j},~~ \mathscr{L}^*\psi_j \xi_{n_j}=-i\omega_j \psi_j \xi_{n_j},~\text{and}~~<\psi_j, \phi_j>=1,~~j=1,2,
\end{array}
\end{equation*}
where $\xi_{n_j}$ is defined as in \eqref{xi}.
Specifically, we let $\phi_j=(1, q_j)^{^{T}}, \psi_j=M_j (1, p_j)$, and after a direct calculation, we have
\begin{equation}\label{eigen_function}
\begin{array}{ll}
q_1=\cfrac{c_0}{i\omega_1+c_0}, ~~q_2=\cfrac{c_0}{i\omega_2
	+\frac{d_2}{\ell^2}+c_0},~~
p_1=\cfrac{1-\beta\lambda_0}{i\omega_1-c_0},~~ p_2=
\cfrac{1-\beta\lambda_0}{i\omega_2-\frac{d_2}{\ell^2}-c_0},\\
M_1=\cfrac{(i\omega_1-c_0)^2}{(i\omega_1-c_0)^2
	-c_0(1-\beta\lambda_0)},~~
M_2=\cfrac{(i\omega_2-
	\frac{d_2}{\ell^2}-c_0)^2}{(i\omega_2-\frac{d_2}{\ell^2}-c_0)^2
	-c_0(1-\beta\lambda_0)}.
\end{array}
\end{equation}
Noticing that $\delta(n_1)=1$ and $\delta(n_2)=0$,
then from \eqref{g_2^1} and \eqref{normal_forms_B2}, we have
\begin{equation}\label{normal_B_model}
\begin{array}{ll}
B_{11}=\bar{\psi}_1 (L_1^{(1,0)}+\widehat L_1^{(1,0)})\phi_1=\cfrac{(i\omega_1+c_0)^2}{(i\omega_1+c_0)^2
	-c_0(1-\beta\lambda_0)}\Big(\dfrac{1-2\beta\lambda_0-\beta\lambda_0^2}
{(1+\lambda_0)^2}-\dfrac{i\omega_1\beta}{i\omega_1+c_0}\Big),\\

B_{21}=\bar{\psi}_1 L_1^{(0,1)}\phi_1=\dfrac{-i\omega_1(1-\beta\lambda_0)}
{(i\omega_1+c_0)^2-c_0(1-\beta\lambda_0)},\\
B_{13}=\bar{\psi}_2 L_1^{(1,0)}\phi_2=\cfrac{(i\omega_2+\frac{d_2}{\ell^2}+c_0)^2}
{(i\omega_2+\frac{d_2}{\ell^2}+c_0)^2-c_0(1-\beta\lambda_0)}
\Big(\dfrac{1-2\beta\lambda_0-\beta\lambda_0^2}
{(1+\lambda_0)^2}+\dfrac{c_0\beta}{i\omega_2+\frac{d_2}{\ell^2}+c_0}\Big),\\
B_{23}=\bar{\psi}_2 L_1^{(0,1)}\phi_2=\dfrac{-(i\omega_2+\frac{d_2}{\ell^2})(1-\beta\lambda_0)}
{(i\omega_2+\frac{d_2}{\ell^2}+c_0)^2-c_0(1-\beta\lambda_0)}.\\

\end{array}
\end{equation}
Applying the method given in Appendix, we obtain
\begin{equation}\label{F_wz_model}
\begin{array}{ll}
F_{w_1z_1}=2(F_{uu}+F_{uv}q_1+F_{u\widehat{u}}),~
&F_{w_2z_1}=2(F_{uv}+F_{vv}q_1),\\
F_{w_1z_3}=2(F_{uu}+F_{uv}q_3),~
&F_{w_2z_3}=2(F_{uv}+F_{vv}q_3),\\
 F_{\widehat w_1z_1}=F_{\widehat w_1z_3}=2F_{u\widehat{u}},~
& F_{\widehat w_2z_1}=F_{\widehat w_2z_3}=0.\\
\end{array}
\end{equation}
and
\begin{equation}\label{F_2000_model}
\begin{array}{ll}
F_{2000}=F_{uu}+q_1^2 F_{vv}+ 2 q_1 F_{uv}+2F_{u\widehat{u}},~&
F_{1100}=2\big[F_{uu}+F_{vv}q_1\bar{q}_1+F_{uv}(q_1+\bar{q}_1)
          +2F_{u\widehat{u}}\big],\\
F_{1010}=2\big[F_{uu}+F_{vv}q_1q_2+F_{uv}(q_1+q_2)+F_{u\widehat{u}}
\big],~&
F_{1001}=2\big[F_{uu}+F_{vv}q_1\bar q_2+F_{uv}(q_1+\bar q_2)+F_{u\widehat{u}}\big],\\
F_{0020}=F_{uu}+ q_2^2 F_{vv}+2 q_2 F_{uv},&
F_{0011}=2\big[F_{uu}+F_{vv}q_2 \bar q_2+F_{uv}(q_2+\bar q_2)\big],\\
F_{0200}=\overline{F_{2000}},~F_{0101}=\overline{F_{1010}},&
F_{0002}=\overline{F_{0020}},~F_{0110}=\overline{F_{1001}}.\\
\end{array}
\end{equation}
 where $F_{uu}=\frac{\partial^2}{\partial u^2}F(0,0,\mu_0)=\frac{\partial^2}{\partial u^2}\widetilde{F}(0,0,0)$. The coefficient vectors required in $\widetilde{F}_3$ are given by

\begin{equation}\label{F_2100_model}
\begin{array}{ll}
F_{2100}=3\big[F_{uuu}+F_{uuv}(2q_1+\bar{q}_1)
+F_{uvv}q_1(2\bar{q}_1+q_1)
+F_{vvv}q_1^2\bar{q}_1 \big],\\
F_{0021}=3\big[F_{uuu}+F_{uuv}(2q_2+\bar{q}_2)
+F_{uvv}q_2(2\bar{q}_2+q_2)
+F_{vvv}q_2^2\bar{q}_2 \big],\\
F_{1110}=6\big[F_{uuu}+F_{uuv}(q_1+\bar{q}_1+q_2)
+F_{uvv}(q_1\bar{q}_1+q_1q_2+q_2\bar{q}_1)
+F_{vvv}q_1\bar{q}_1q_2 \big],\\
F_{1011}=6\big[F_{uuu}+F_{uuv}(q_1+q_2+\bar{q}_2)
+F_{uvv}(q_2\bar{q}_2+q_1q_2+q_1\bar{q}_2)
+F_{vvv}q_2\bar{q}_2q_1 \big].\\
\end{array}
\end{equation}
Other formulas appearing in the process of computing normal forms can be obtained from the above formulas.

\subsection{Numerical simulations}
In this section, we give some simulations to support our theoretical results. The dynamic classification near
the double Hopf bifurcation point is presented by applying the normal form method, and a particular bifurcation diagram and corresponding phase portraits are shown in Fig. \ref{fig-mu}.

Take
$$d_1=0.6,~~d_2=0.2,~~\beta=0.1,~~\ell^2=8.$$
It follows from Theorem \ref{th_3_hopf-hopf} and Eq.\eqref{c_0_c_n}, the double Hopf bifurcation point $(\lambda_0, c_0)=(1, 0.35)$. Note that
$b=\frac{(1-\beta\lambda)(1+\lambda)}{\lambda}$, then using the formulas
 \eqref{z_normal}, \eqref{rho_1_2} and algorithm \eqref{normal_B_model}$\sim$\eqref{F_2100_model}, we have

\begin{equation}\label{kappa_1_2}
\begin{array}{ll}
\kappa_1(\mu)=-0.0375\mu_1+0.5\mu_2,\\
\kappa_2(\mu)=-0.0875\mu_1+0.5\mu_2.
\end{array}
\end{equation}
and
$$\epsilon_1=-1,~~\epsilon_2=-1,~~d_0=1,~~b_0=-0.5363,~~\hat c_0=-0.6230,~~d_0-b_0 \hat c_0=0.6659.$$
Then system \eqref{rho_1_2} becomes
\begin{equation}\label{rho_numerical}
\begin{array}{ll}
\dot{\rho}_1=\rho_1\big(-0.0375\mu_1+0.5\mu_2+\rho_1^2-0.5363\rho_2^2\big),\\
\dot{\rho}_2=\rho_2\big(-0.0875\mu_1+0.5\mu_2-0.6230\rho_1^2+\rho_2^2\big).
\end{array}
\end{equation}

According to the classification for planar vector field in \cite{Guckenheimer-1983}, Case $(IVa)$ occurs, and we can
divide the $\mu_1-\mu_2$ plane into six dynamic regions with
\begin{equation*}
\begin{array}{ll}
L_1: \mu_2=0.075\mu_1, &L_2: \mu_2=0.175\mu_1,\\
T_1: \mu_2=0.1099\mu_1,&T_2: \mu_2=0.1366\mu_1.
\end{array}
\end{equation*}
There are four possible attractors in Fig. \ref{fig-mu}: spatially homogeneous
steady state, spatially homogeneous periodic solution, spatially nonhomogeneous periodic solution and spatially nonhomogeneous quasi-periodic solution. In the following, we give a detailed numerical simulation for these attractors, see Fig. \ref{fig-D1}$\sim$ Fig. \ref{fig-D6}.

We remark that when $(\mu_1,\mu_2)\in D_3\bigcup D_4 \bigcup D_5$, there exists a stable spatially nonhomogeneous quasi-periodic
solution, and this quasi-periodicity is not easily seen in Fig. \ref{fig-D4}. Then we present it on a Poincar\'{e} section. Fix $x=\pi$, we choose the solution curve $(u(\pi,t), v(\pi,t))$ and Poincar\'{e} section $v(\pi,t)=\lambda_0$, and the results are shown in Fig. \ref{fig-D4_poincare} in which we can see that system has a quasi-periodic solution on a 2-torus. Here we only present the case in region $D_4$, since $D_3$ and $D_5$ are similar.
We mention that the spatially nonhomogeneous periodic solution and quasi-periodic solution are new spatiotemporal dynamic behaviors
compared to the original system
without nonlocal terms. This shows that
nonlocal terms can enrich the dynamic behaviors of the system.

\begin{figure}[htp]
	\centering
	\includegraphics[width=6in]{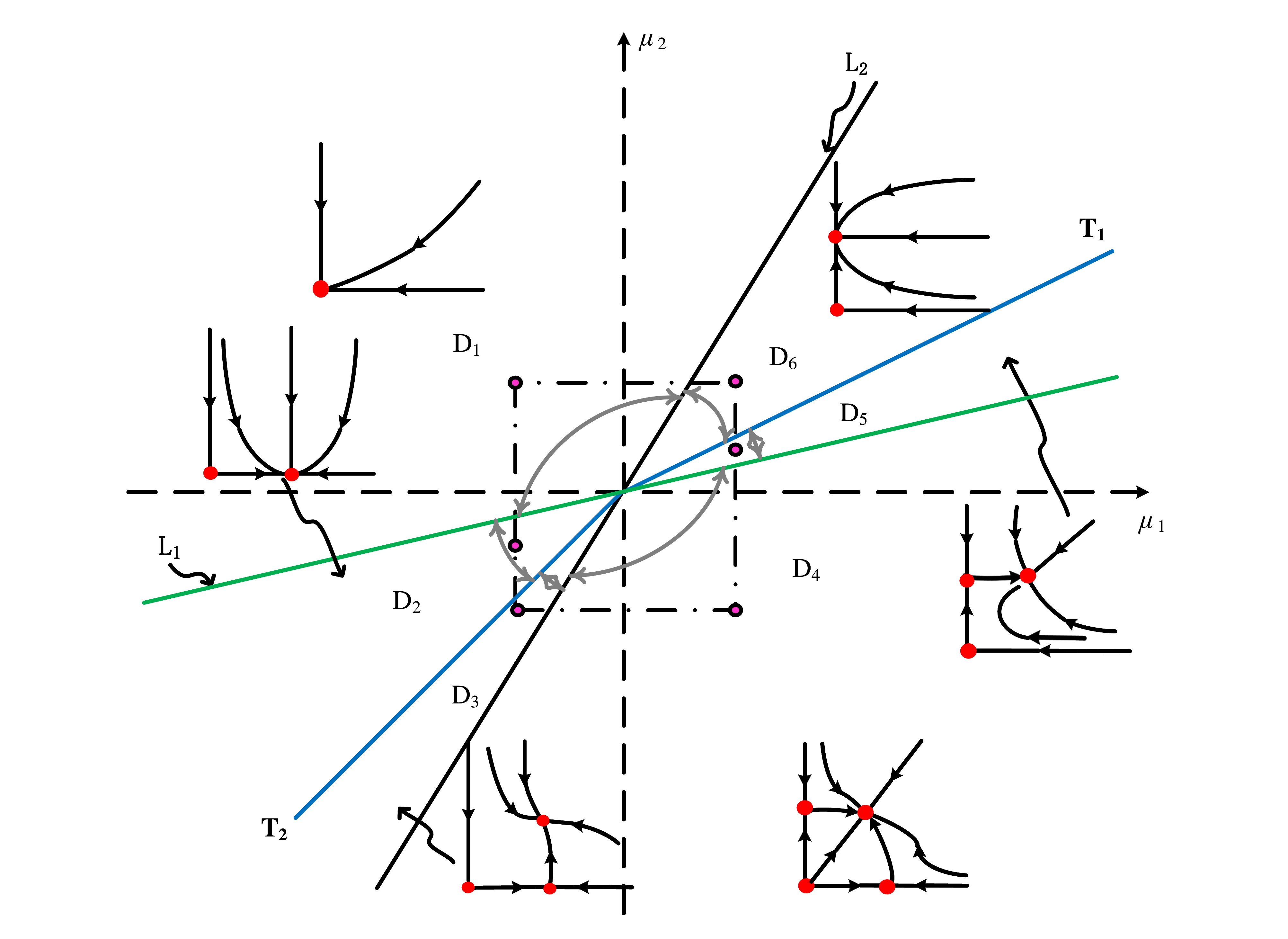}
	\caption{Bifurcation diagram and the corresponding phase portraits of system \eqref{rho_numerical} in the $\mu_1-\mu_2$ plane.}
	\label{fig-mu}       
\end{figure}

\begin{figure}[htp]
	\begin{center}
		\hspace{-0.2cm}
		\subfigure[]{\includegraphics[width=0.33\textwidth]{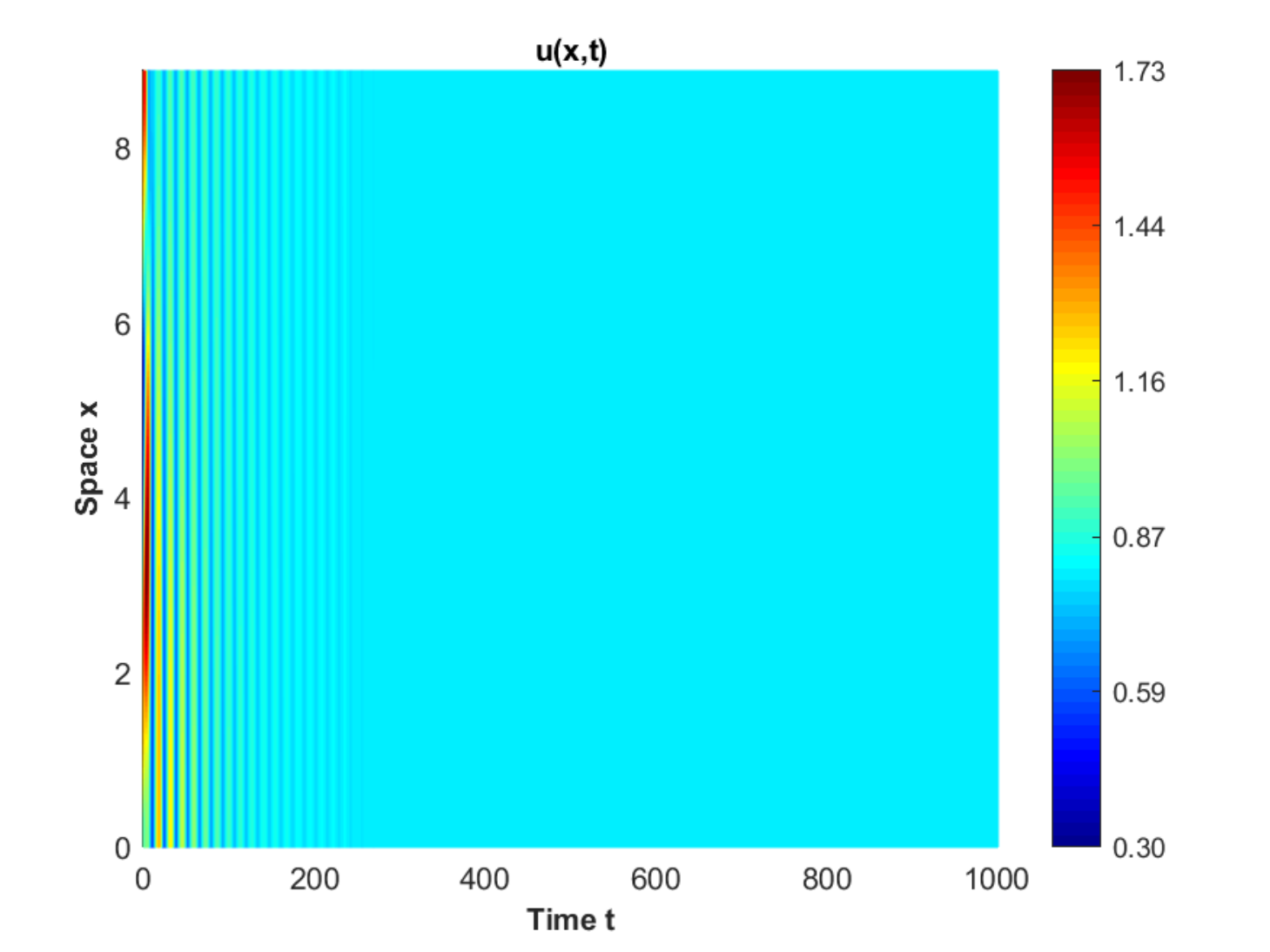}}
		\hspace{-0.2cm}
		\subfigure[]{\includegraphics[width=0.33\textwidth]{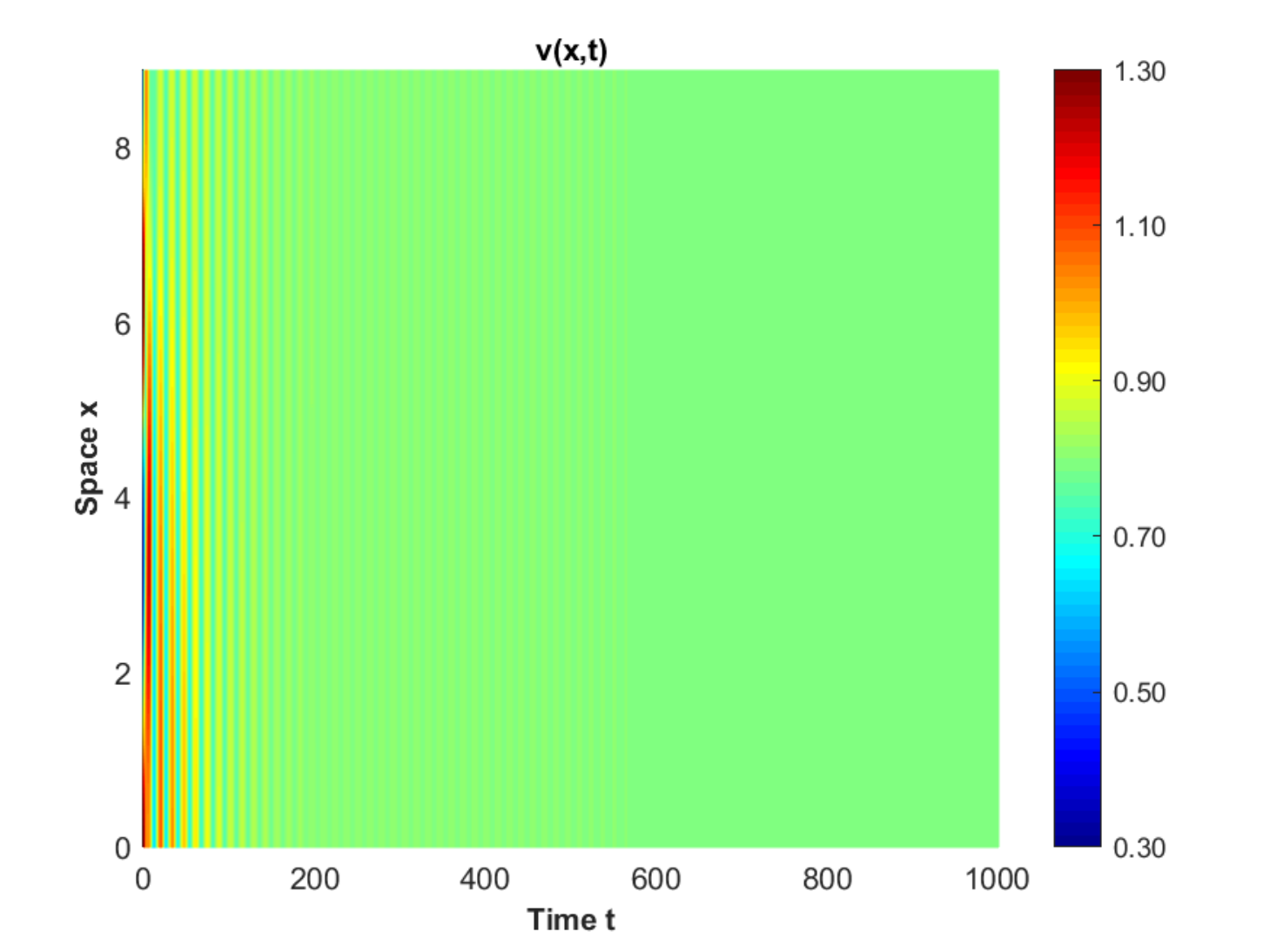}}
		\hspace{-0.2cm}
		\subfigure[]{\includegraphics[width=0.33\textwidth]{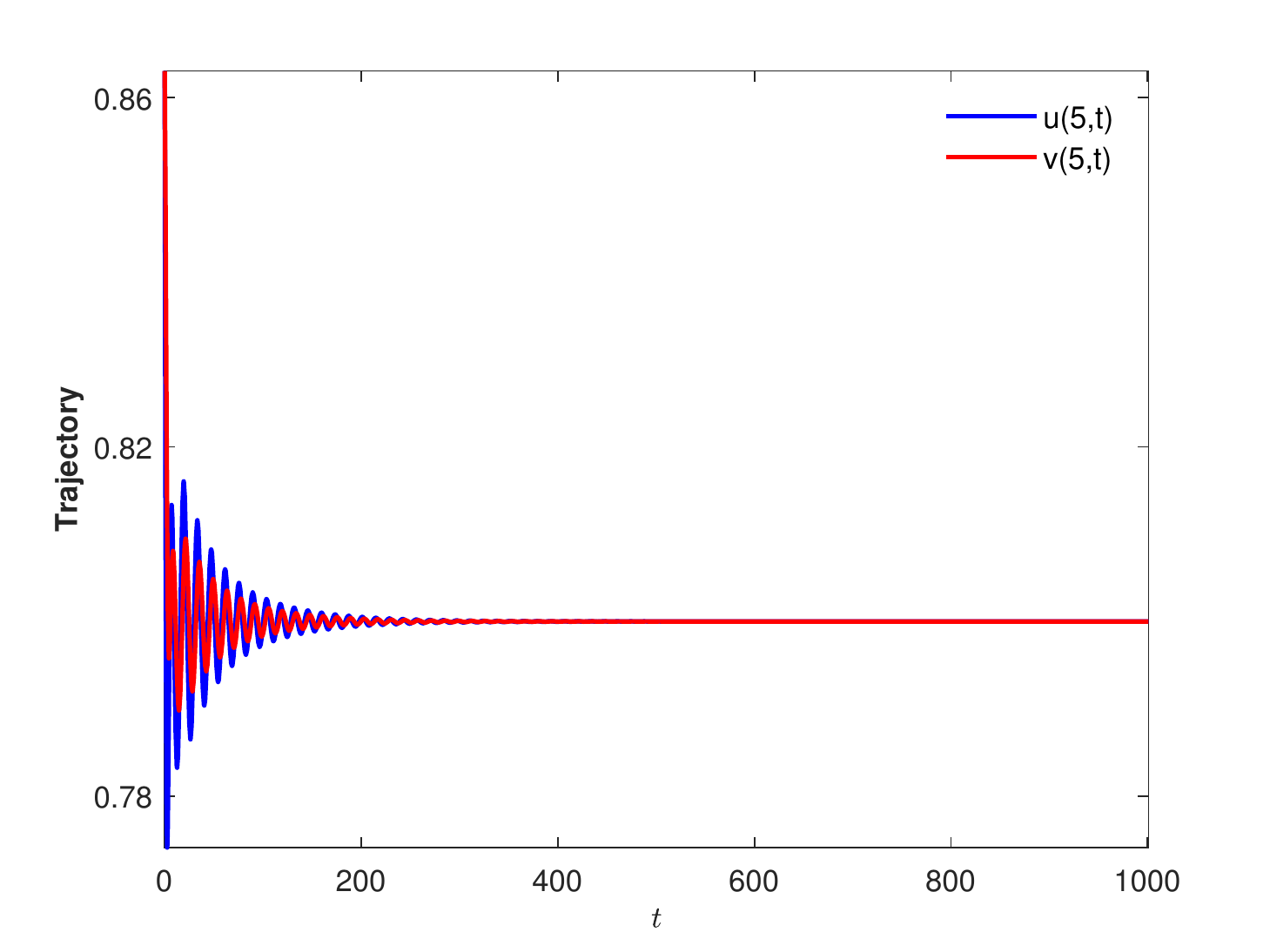}}
		\vspace{-0.2cm}
	\end{center}
	\caption{Stable positive constant steady state with
		$(\mu_1, \mu_2)=(-0.2, 0.00925)\in D_1$. The initial function are chosen as $(\lambda_0+0.5\sin x, \lambda_0+0.5\cos x)$.}\label{fig-D1}
\end{figure}

\begin{figure}[htp]
	\begin{multicols}{3}
		\begin{center}
			\subfigure[]{\includegraphics[width=0.33\textwidth]{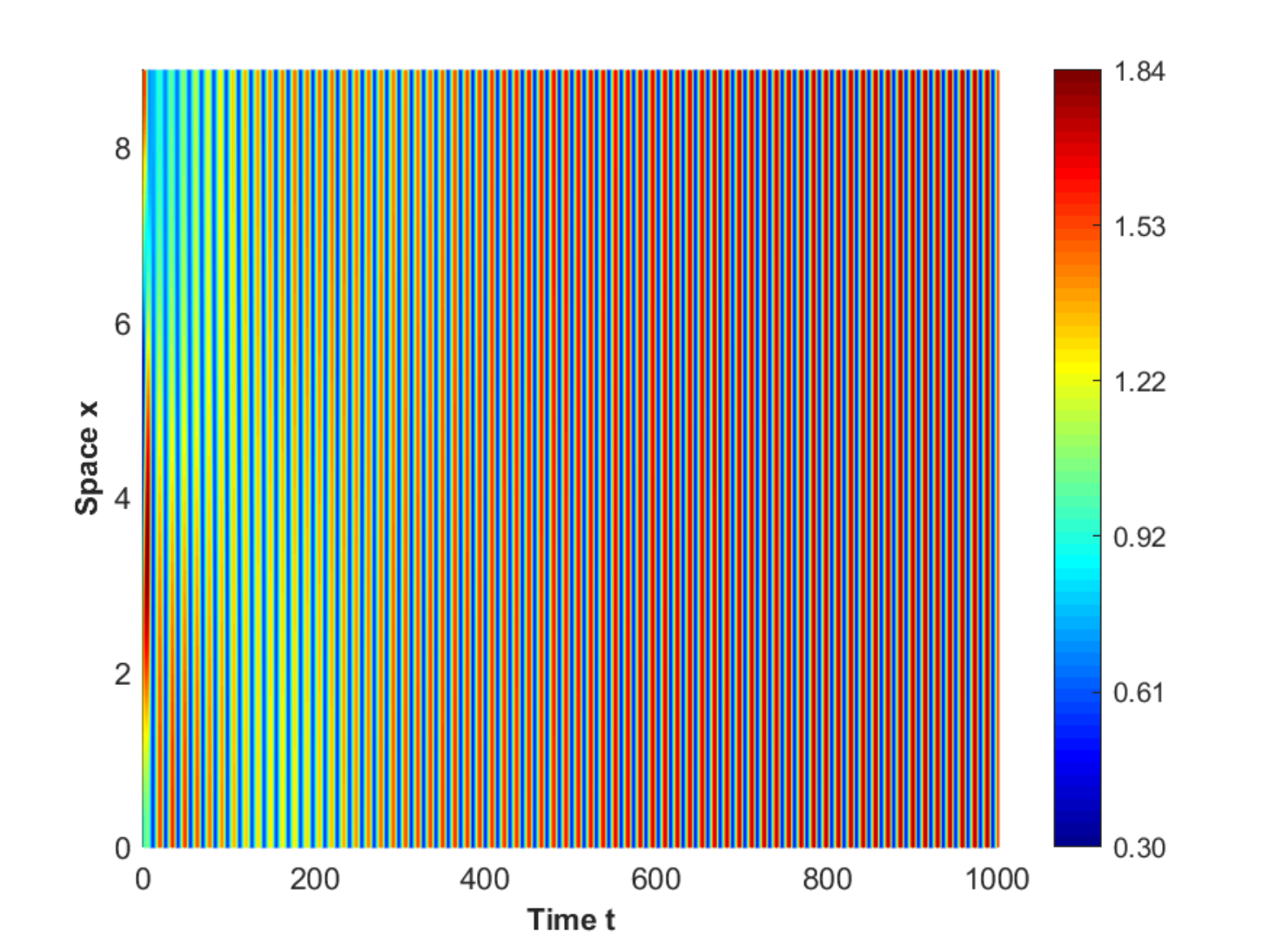}}\vspace{-0.3cm}
			\subfigure[]{\includegraphics[width=0.33\textwidth]{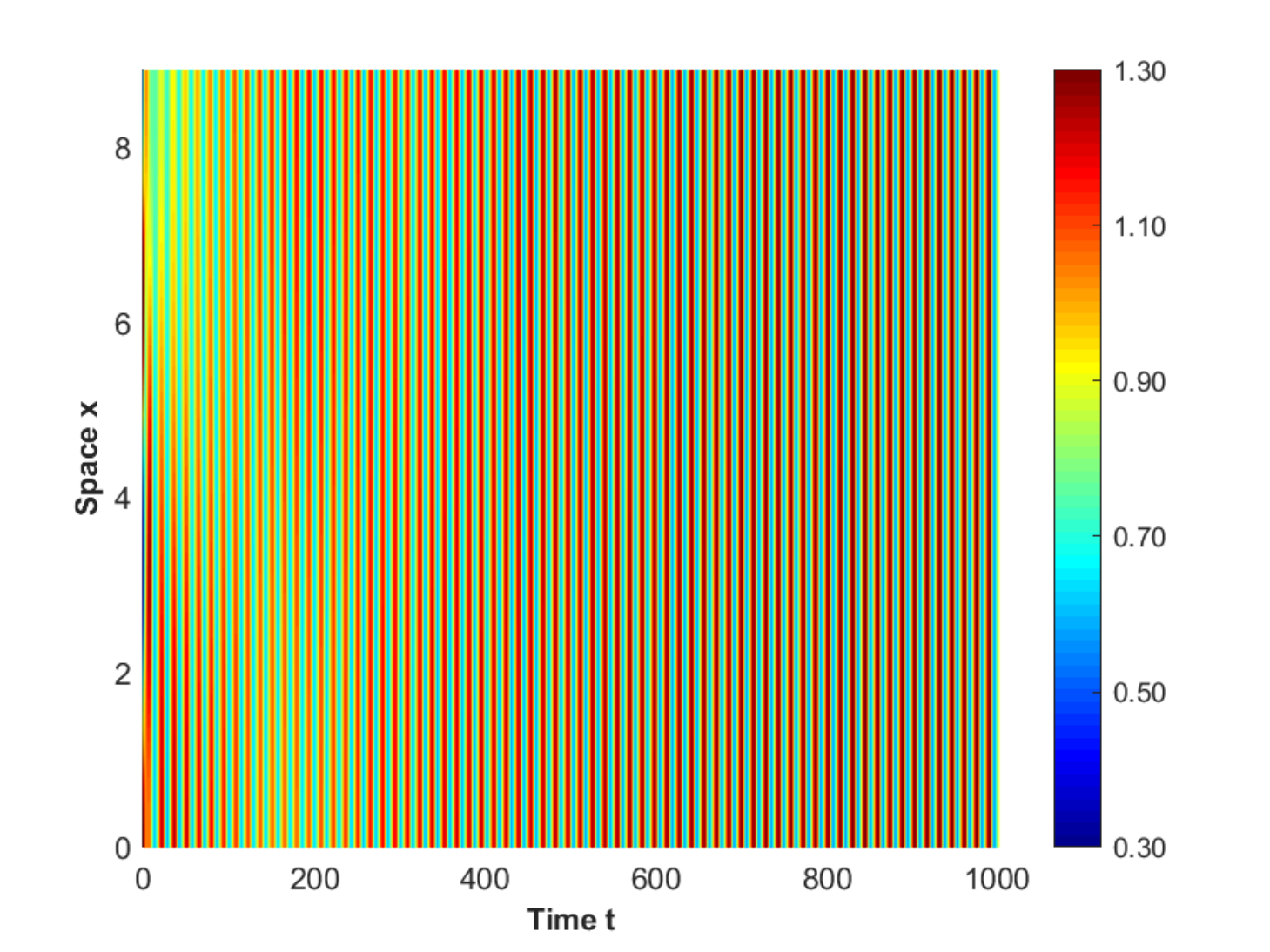}}
		\end{center}
		
		\begin{center}
			\subfigure[]{\includegraphics[width=0.33\textwidth]{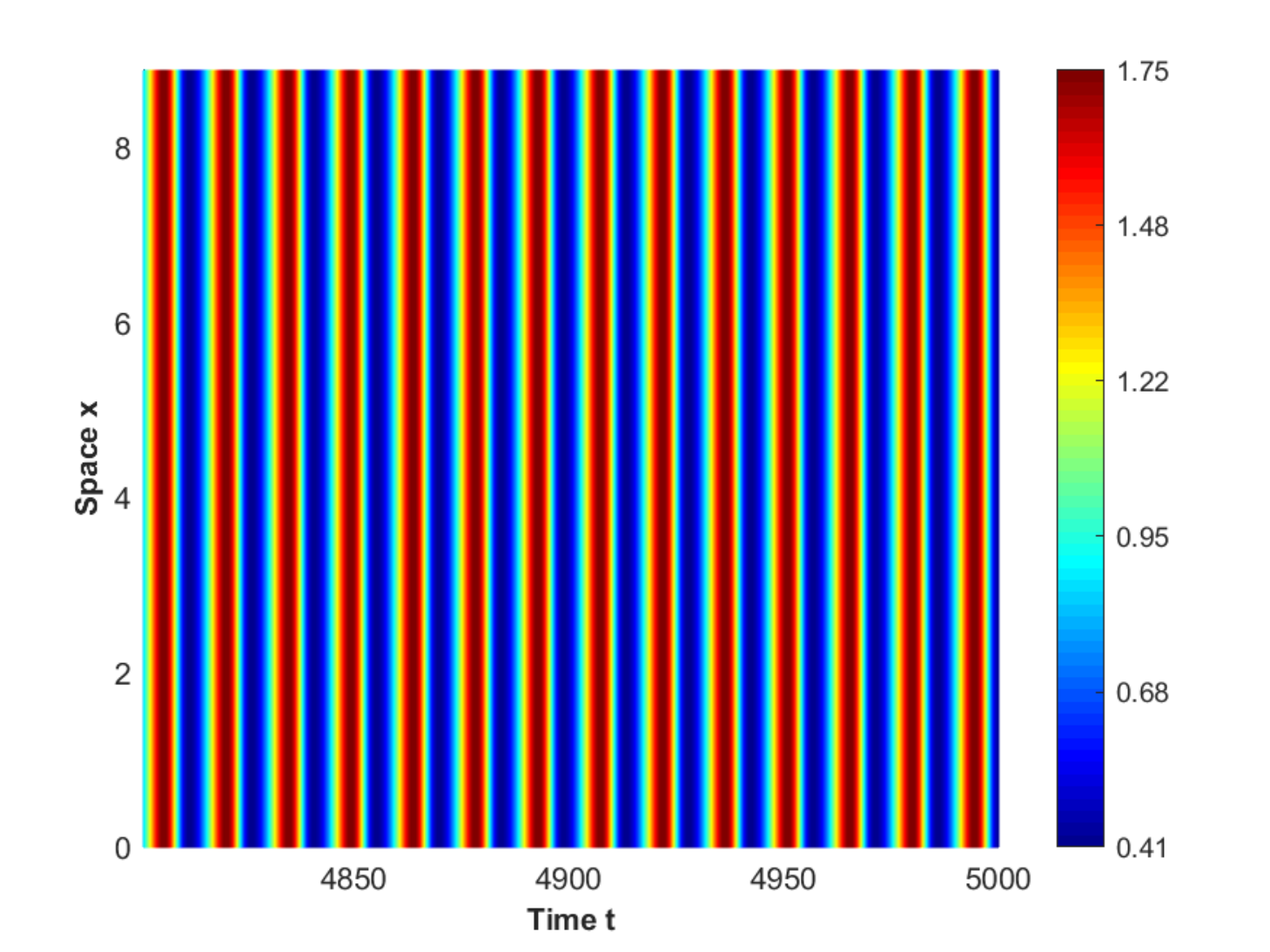}}\vspace{-0.3cm}
			\subfigure[]{\includegraphics[width=0.33\textwidth]{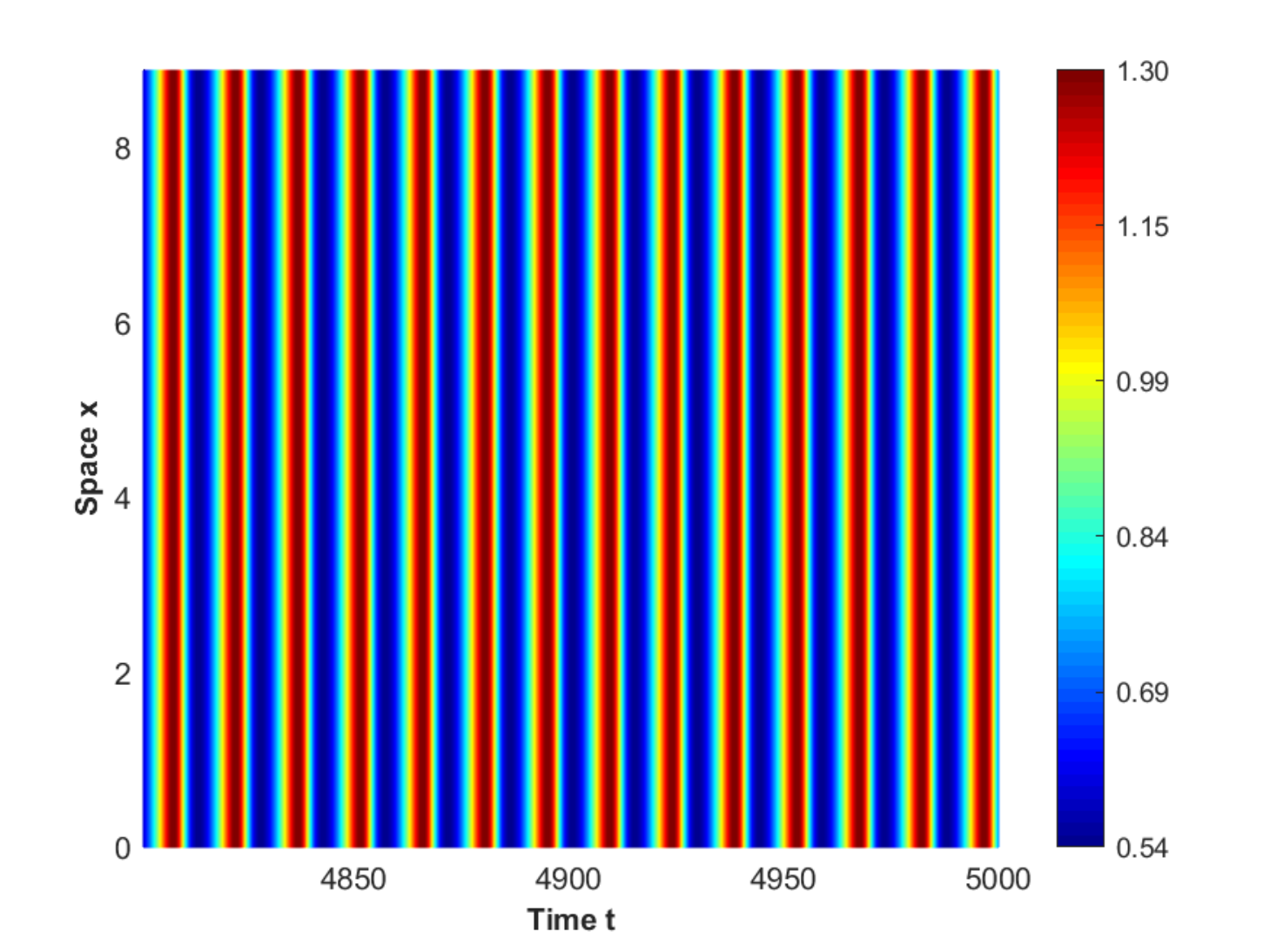}}
		\end{center}
	\begin{center}
		\subfigure[]{\includegraphics[width=0.33\textwidth]{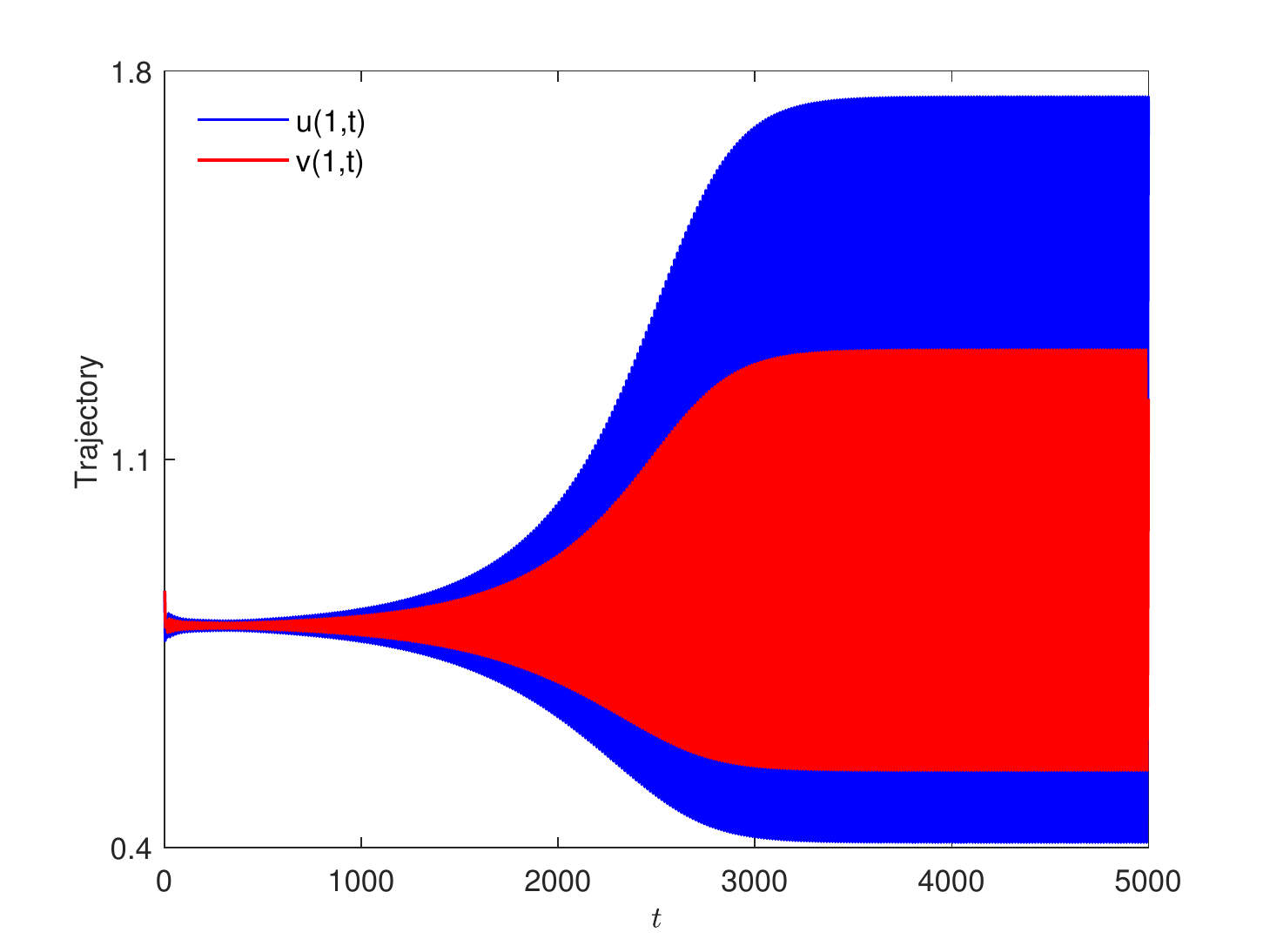}}\vspace{-0.3cm}
		\subfigure[]{\includegraphics[width=0.33\textwidth]{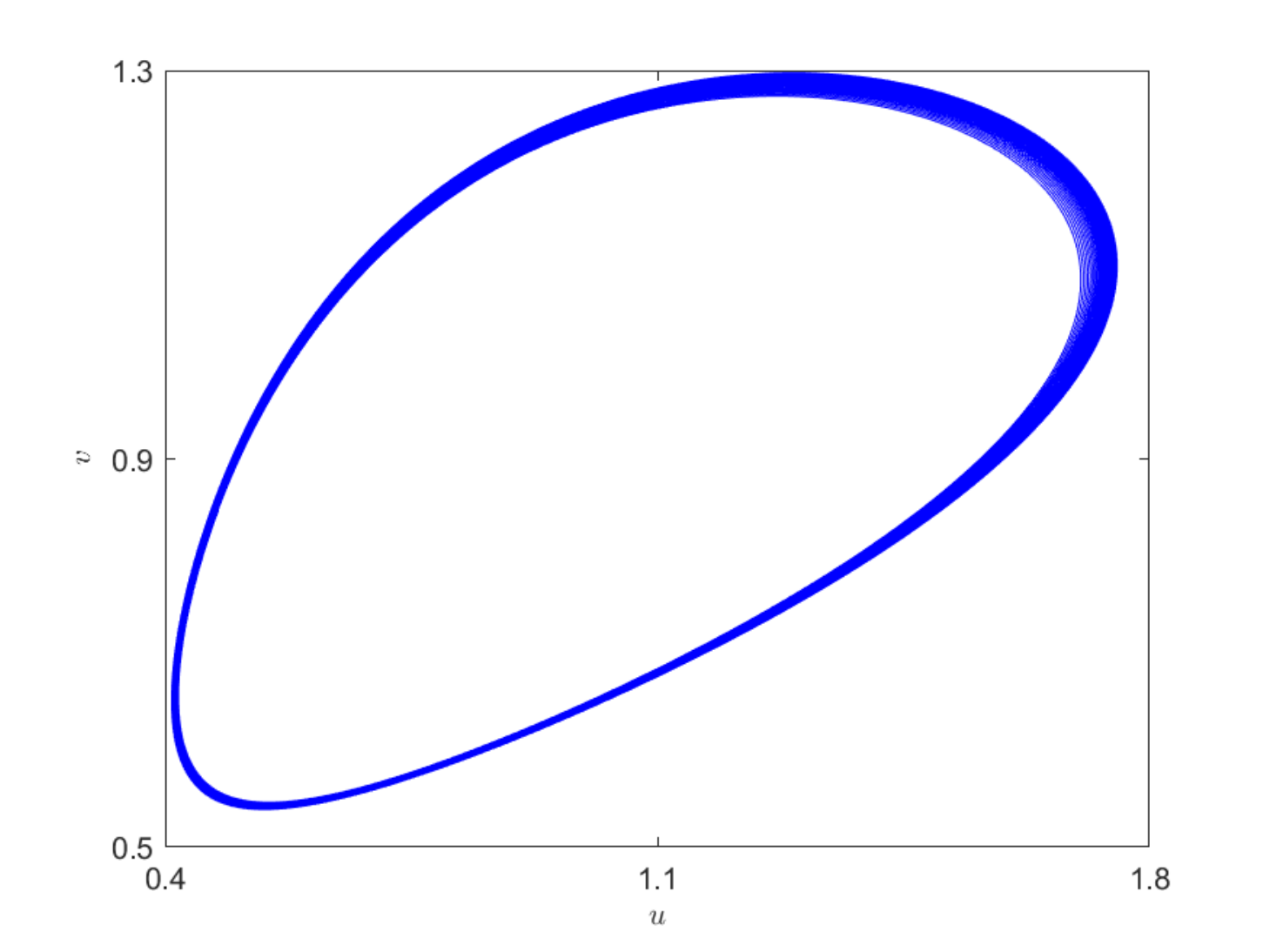}}
	\end{center}
	\end{multicols}
\vspace{-0.2cm}
	\caption{Stable spatially homogeneous periodic solution with
		$(\mu_1, \mu_2)=(-0.2, -0.02)\in D_2$, and the initial function are $(\lambda_0+0.5\sin x, \lambda_0+0.5\cos x)$. (a) and (c): the dynamics of $u$; (b) and (d): the dynamics of $v$; (e) and (f): the trajectories and corresponding periodic orbits at $x=1$.}\label{fig-D2}
	
\end{figure}

%
%
%
%

\begin{figure}[htp]
	\begin{multicols}{3}
		\begin{center}
			\subfigure[]{\includegraphics[width=0.33\textwidth]{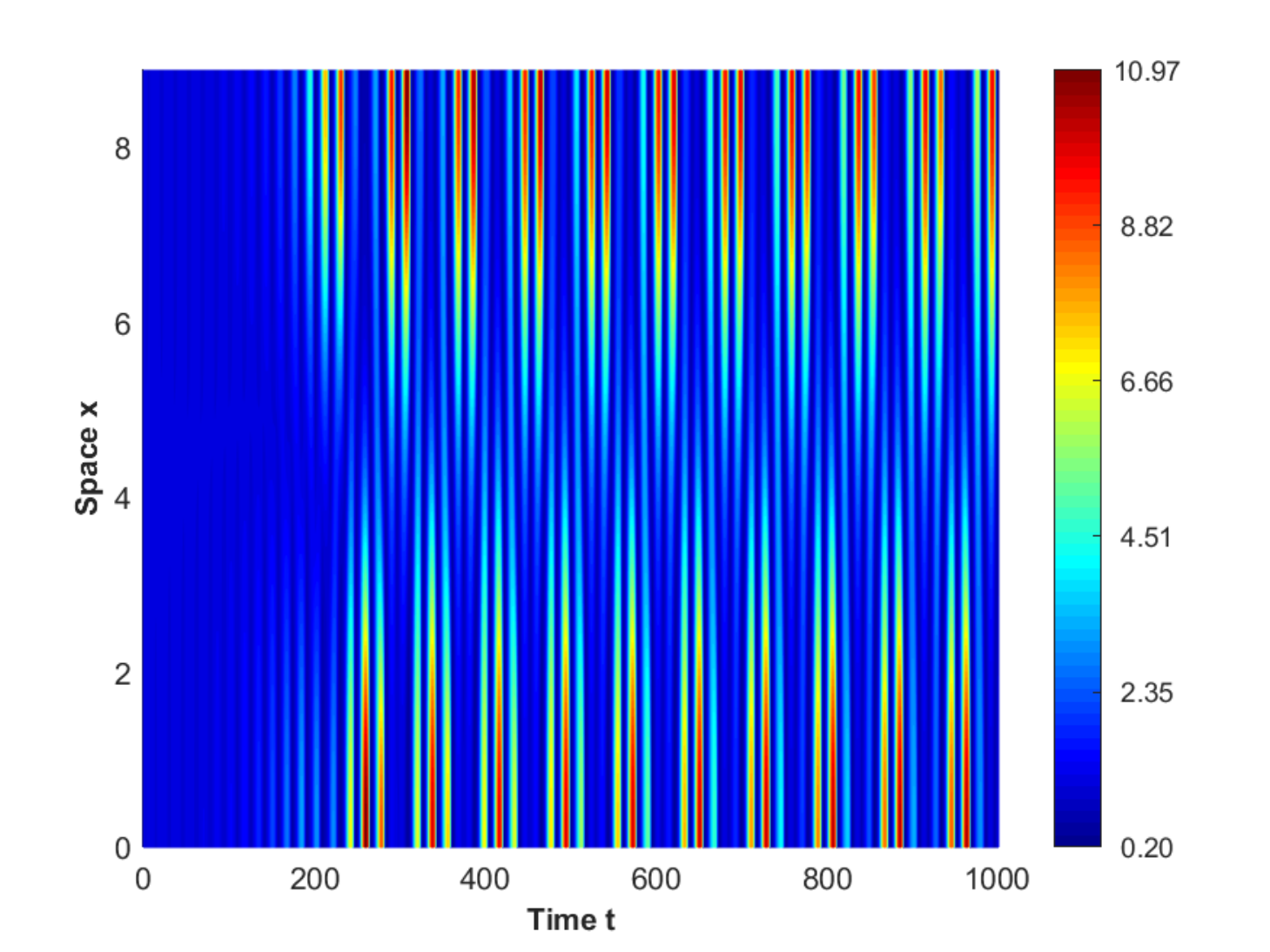}}\vspace{-0.3cm}
			\subfigure[]{\includegraphics[width=0.33\textwidth]{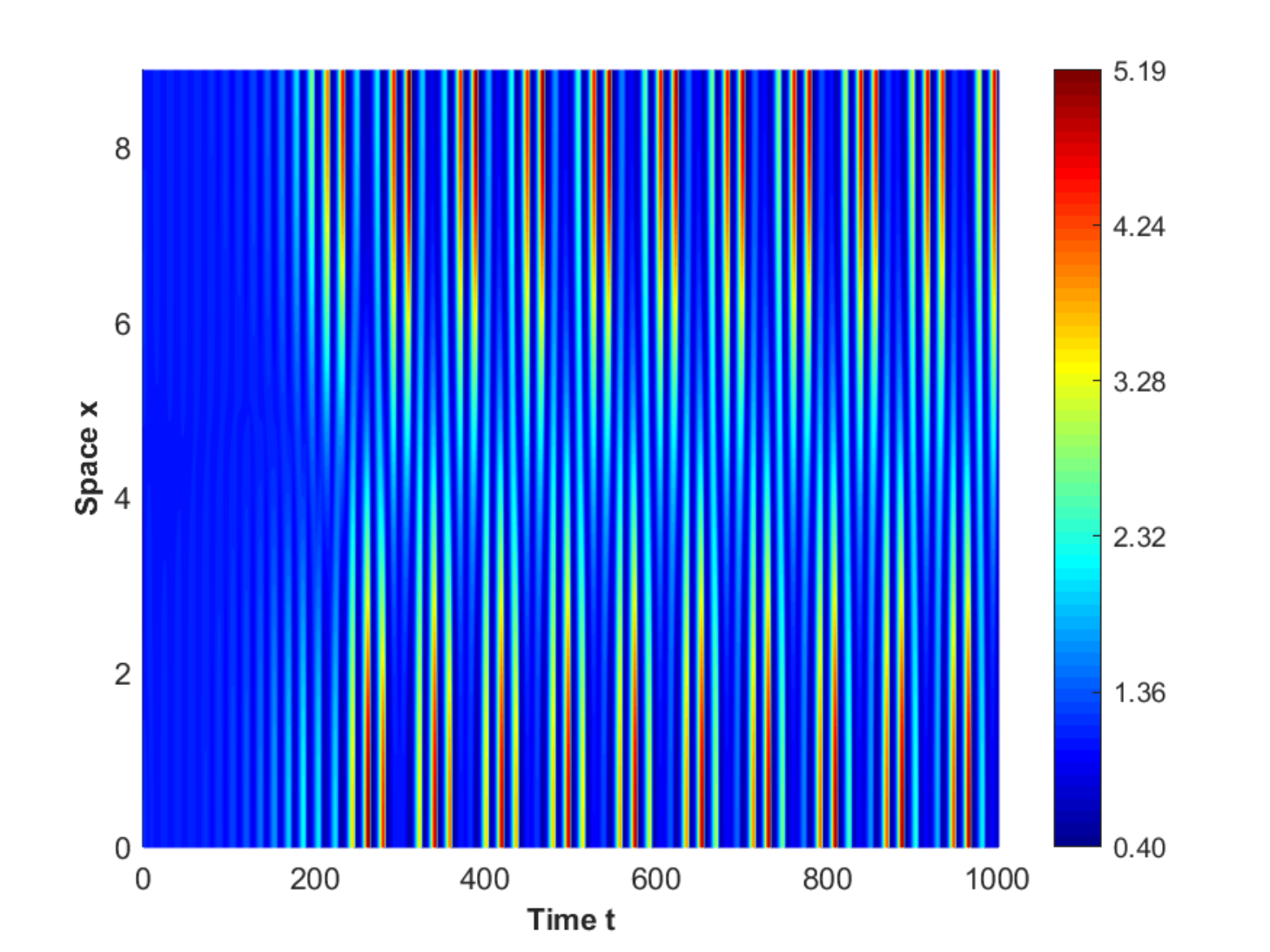}}
			
		\end{center}
		
		\begin{center}
			\subfigure[]{\includegraphics[width=0.33\textwidth]{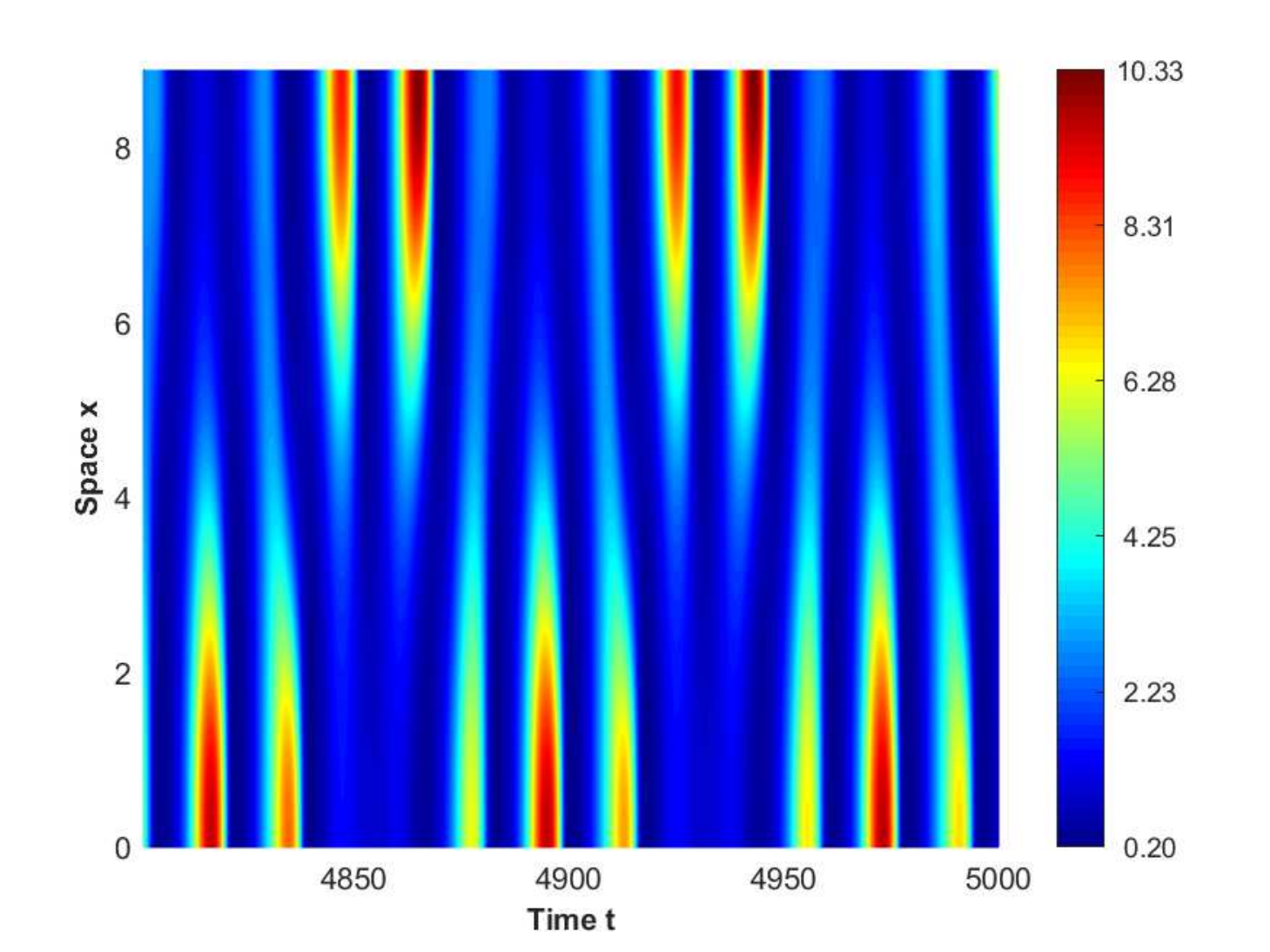}}\vspace{-0.3cm}
			\subfigure[]{\includegraphics[width=0.33\textwidth]{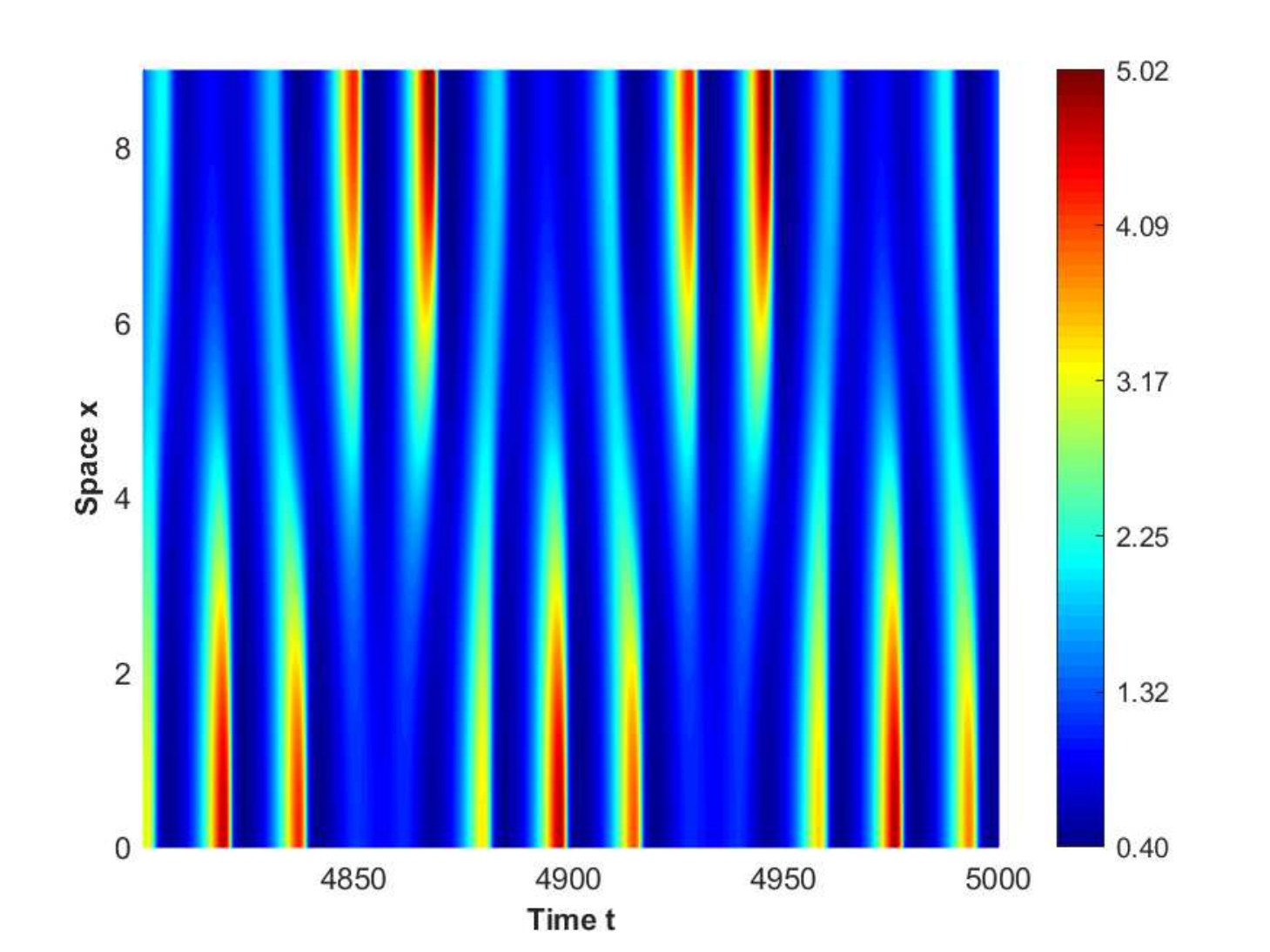}}
		\end{center}
	\begin{center}
		\subfigure[]{\includegraphics[width=0.33\textwidth]{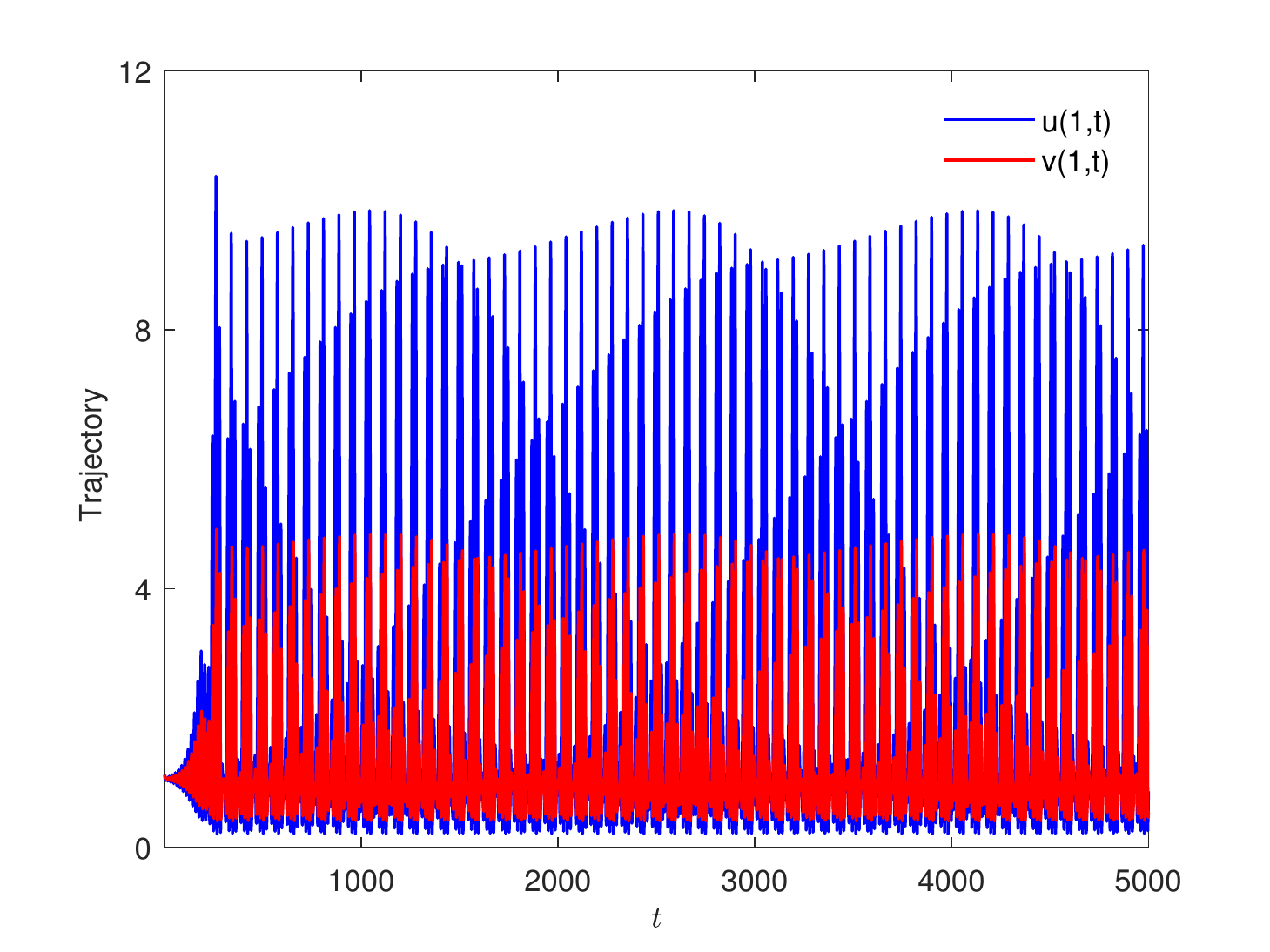}}\vspace{-0.3cm}
		\subfigure[]{\includegraphics[width=0.33\textwidth]{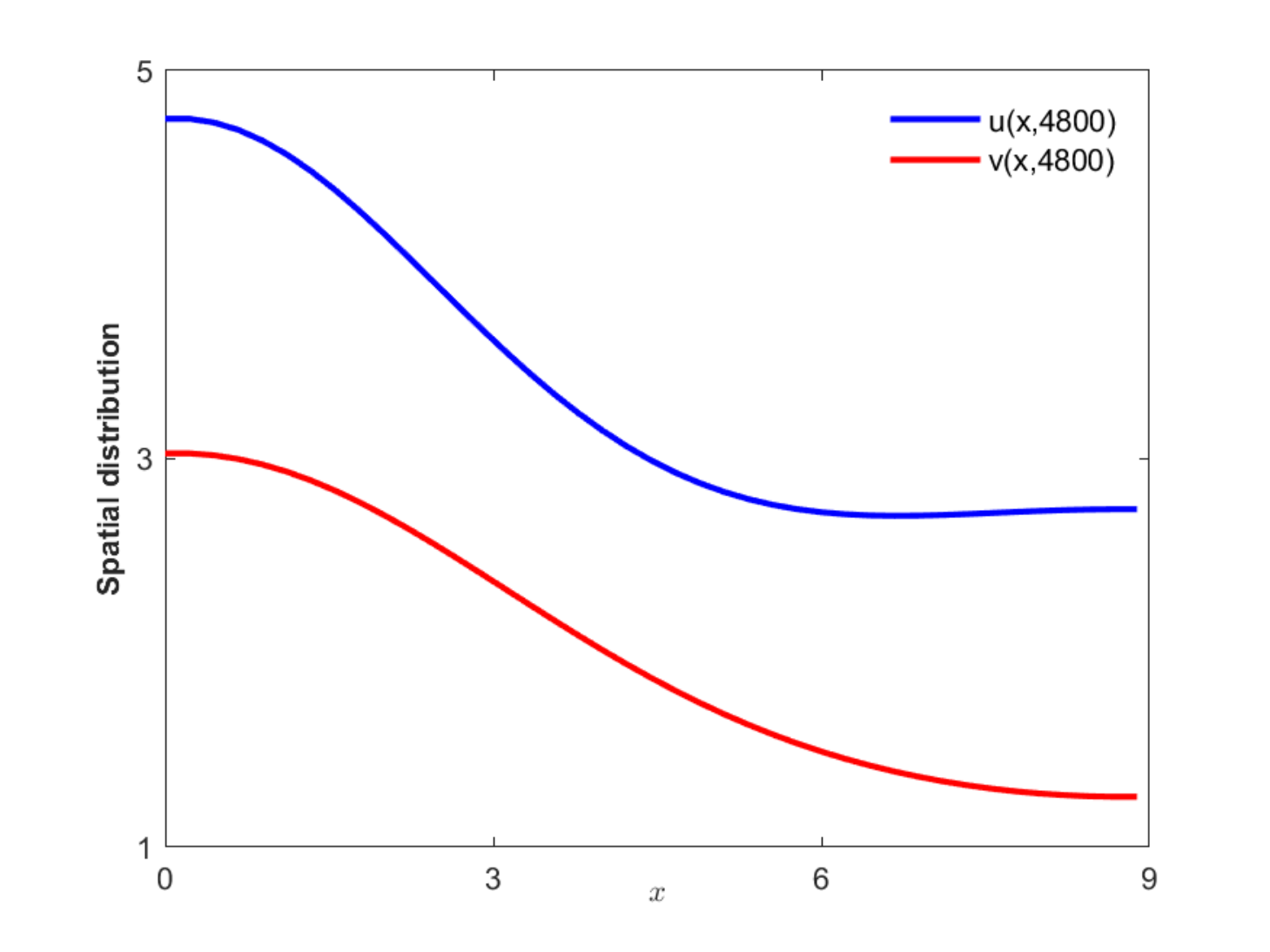}}
	\end{center}
	\end{multicols}
\vspace{-0.2cm}
	\caption{Stable spatially nonhomogeneous quasi-periodic solution with
		$(\mu_1, \mu_2)=(0.06, -0.03)\in D_4$, and the initial function are $(\lambda_0+0.1\cos x, \lambda_0+0.1\cos x)$. (a) and (c): the dynamics of $u$; (b) and (d): the dynamics of $v$; (e): the trajectories at $x=1$; (f): the spatial distribution at $t=4800$.}\label{fig-D4}
	
\end{figure}

%
%
%

\begin{figure}[htp]
	\begin{multicols}{3}
		\begin{center}
			\subfigure{\includegraphics[width=0.33\textwidth]{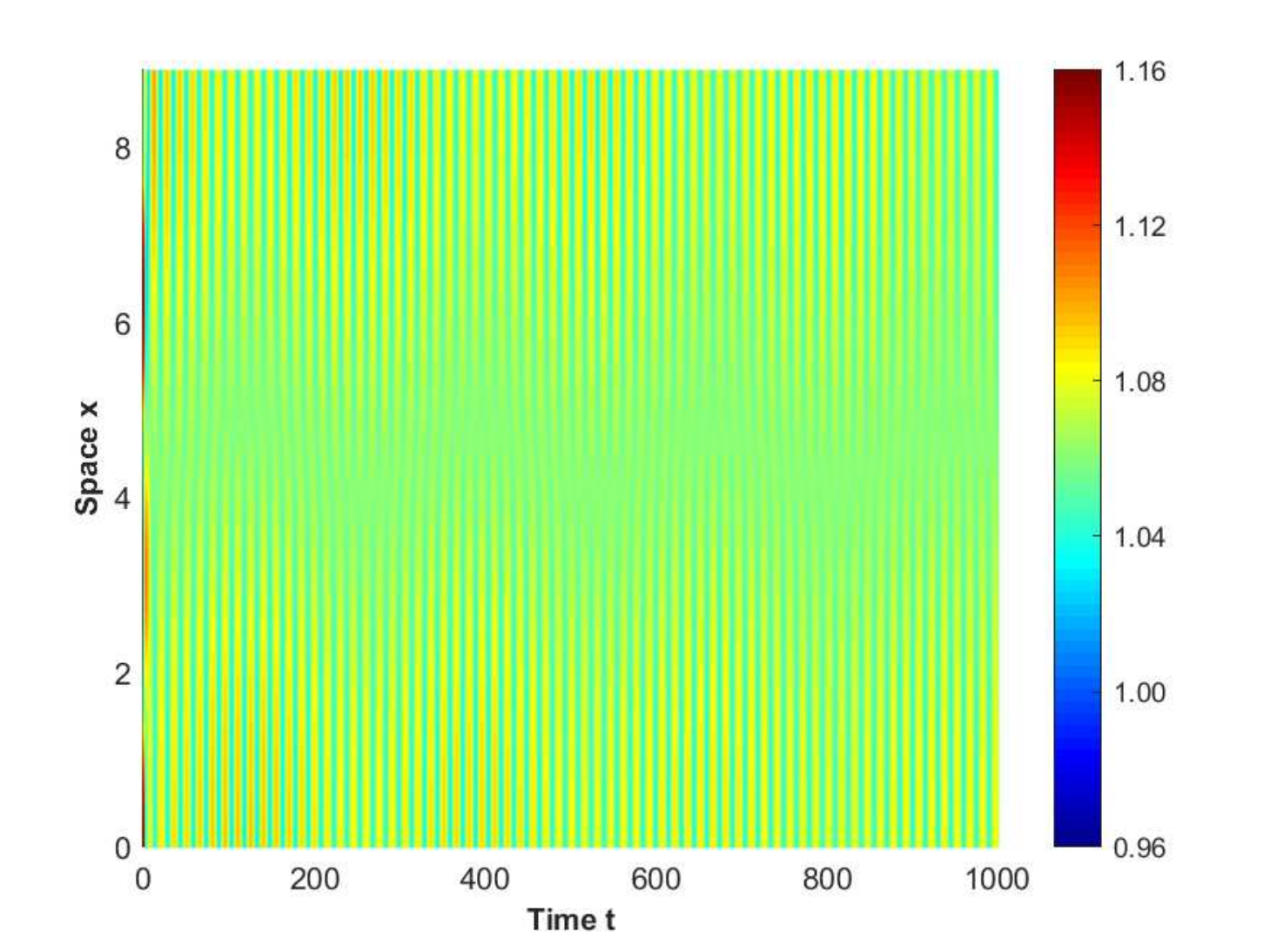}}\\
			\subfigure{\includegraphics[width=0.33\textwidth]{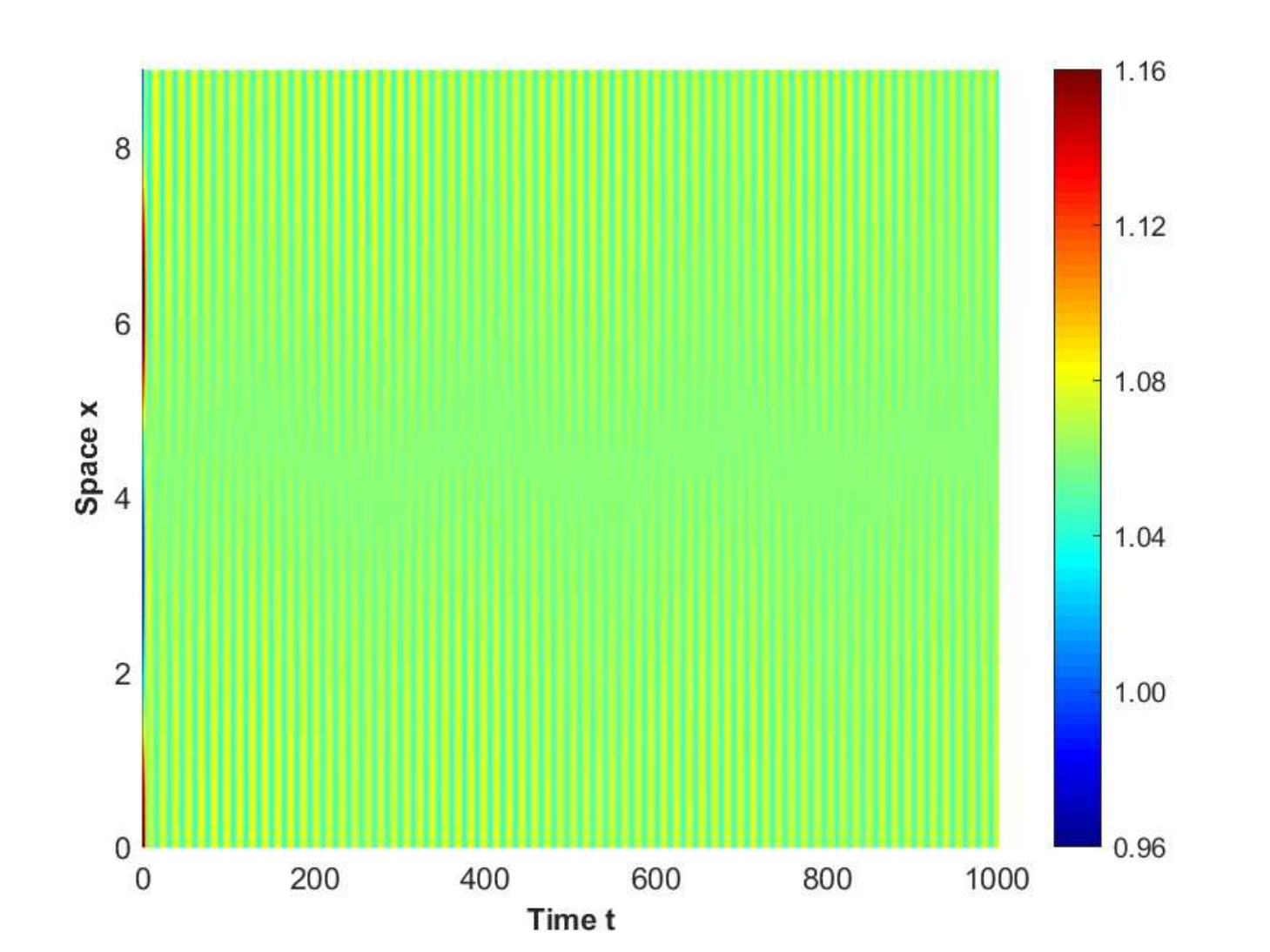}}
		\end{center}
		
		\begin{center}
			\subfigure{\includegraphics[width=0.33\textwidth]{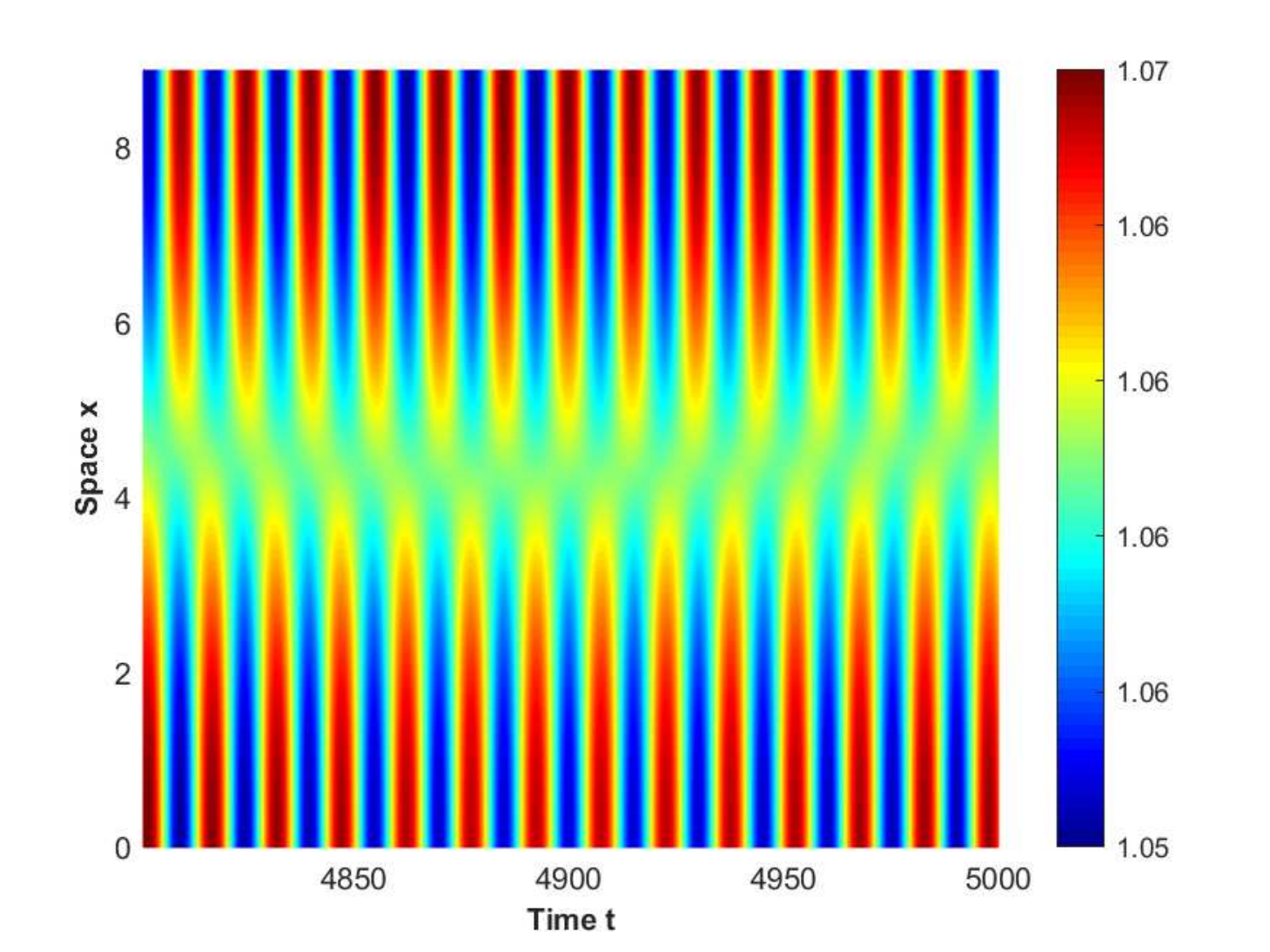}}\\
			\subfigure{\includegraphics[width=0.33\textwidth]{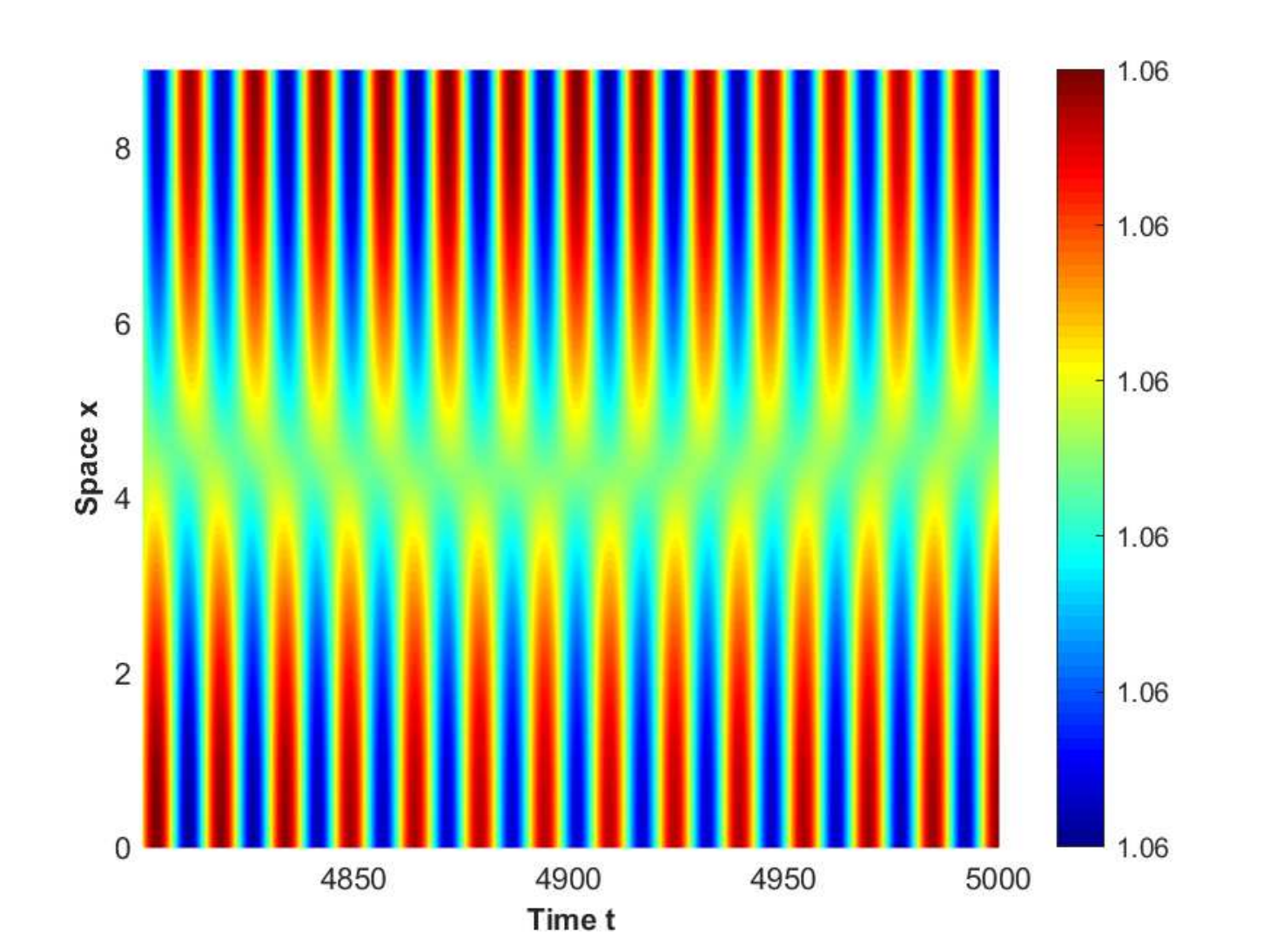}}
		\end{center}
				
		\begin{center}
			\subfigure{\includegraphics[width=0.33\textwidth]{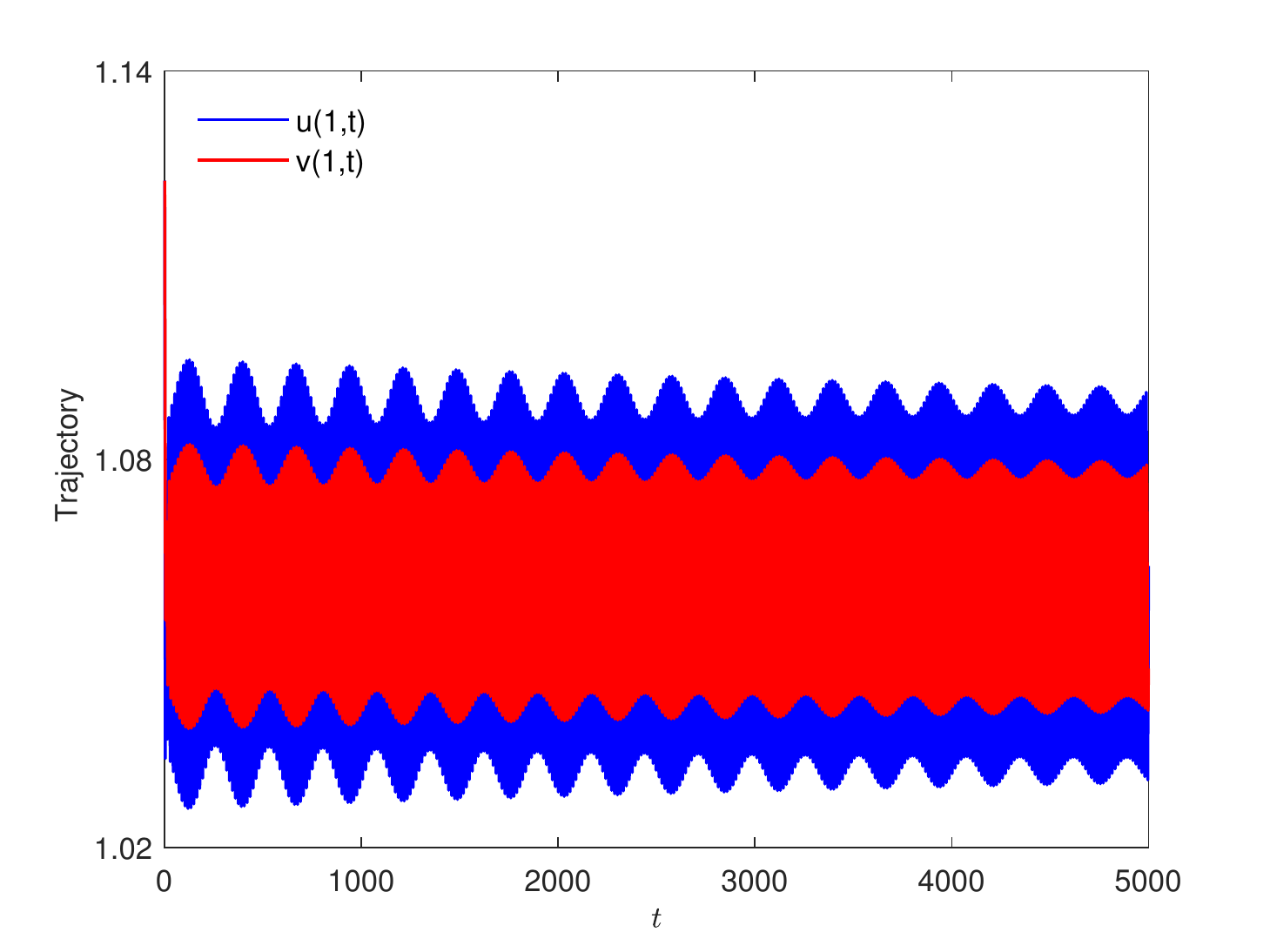}}\\
			\subfigure{\includegraphics[width=0.33\textwidth]{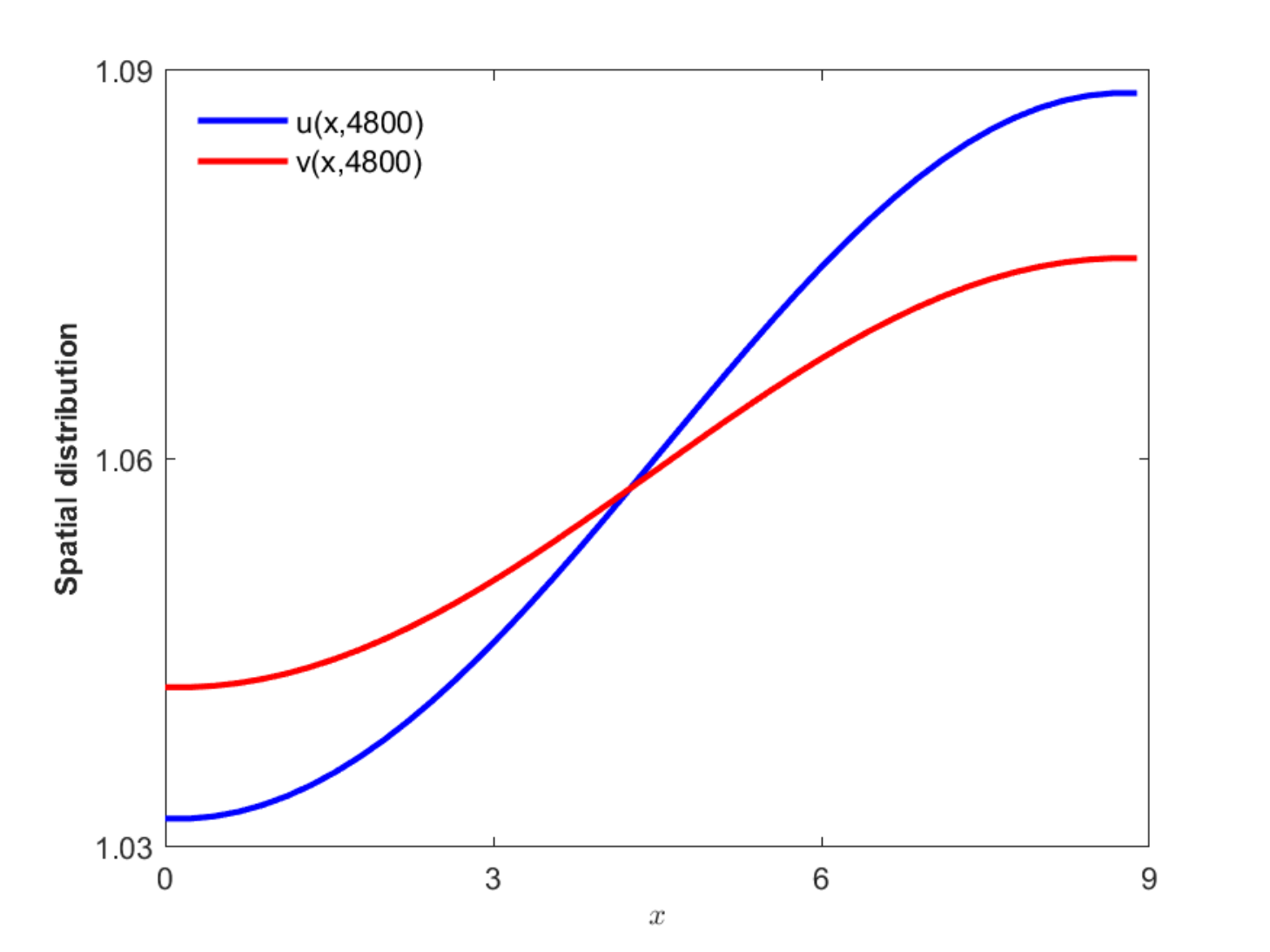}}
		\end{center}
		
	\end{multicols}
\vspace{-0.2cm}
	\caption{Stable spatially nonhomogeneous periodic solution with
		$(\mu_1, \mu_2)=(0.06, 0.00925)\in D_6$, and the initial function are $(\lambda_0+0.1\cos x, \lambda_0+0.1\cos x)$.(a) and (c): the dynamics of $u$; (b) and (d): the dynamics of $v$; (e): the trajectories at $x=1$; (f): the spatial distribution at $t=4800$.}\label{fig-D6}
	
\end{figure}

\begin{figure}[htp]
	\centering
	\subfigure{\includegraphics[width=0.45\textwidth]{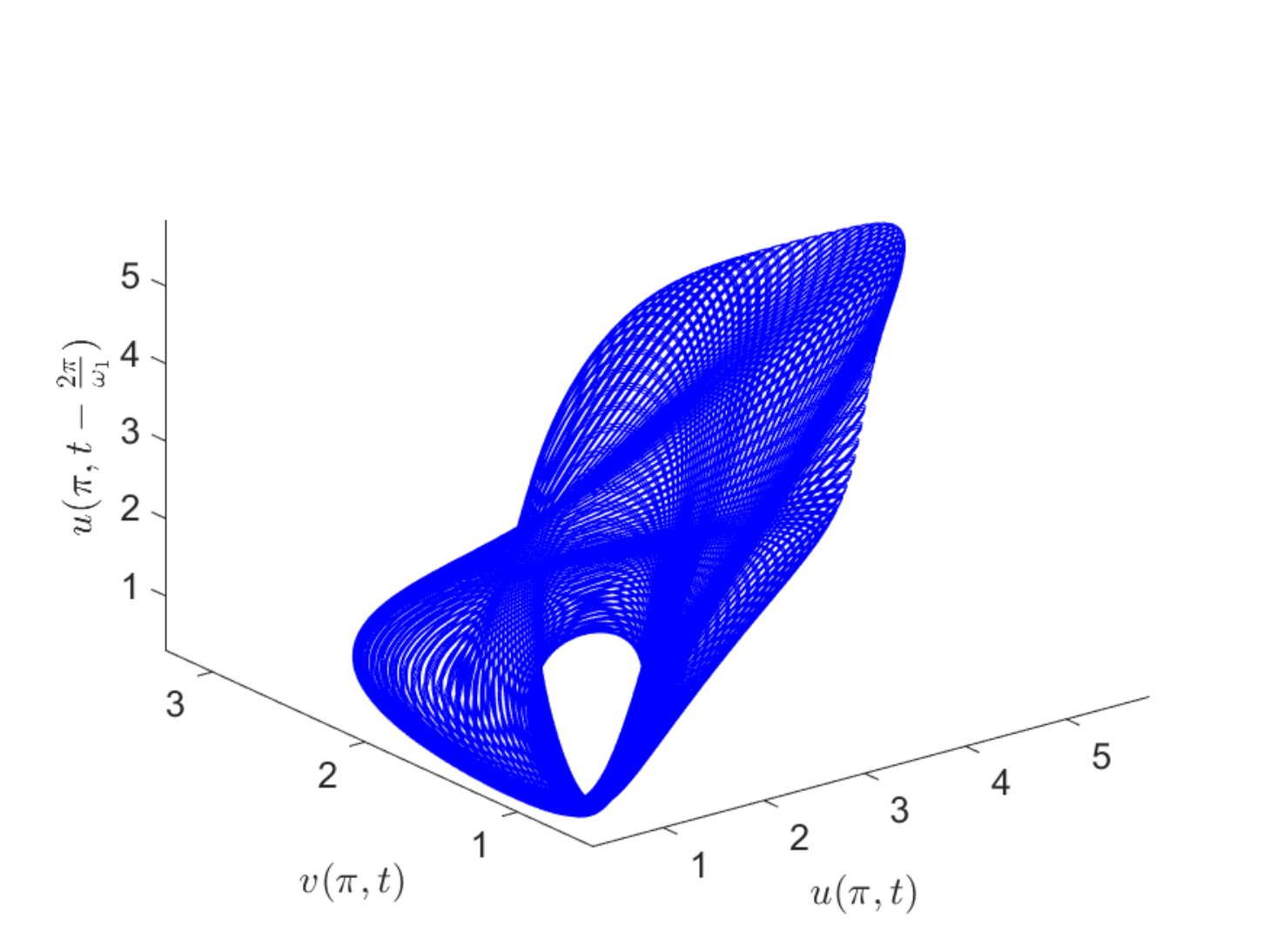}}
	\subfigure{\includegraphics[width=0.45\textwidth]{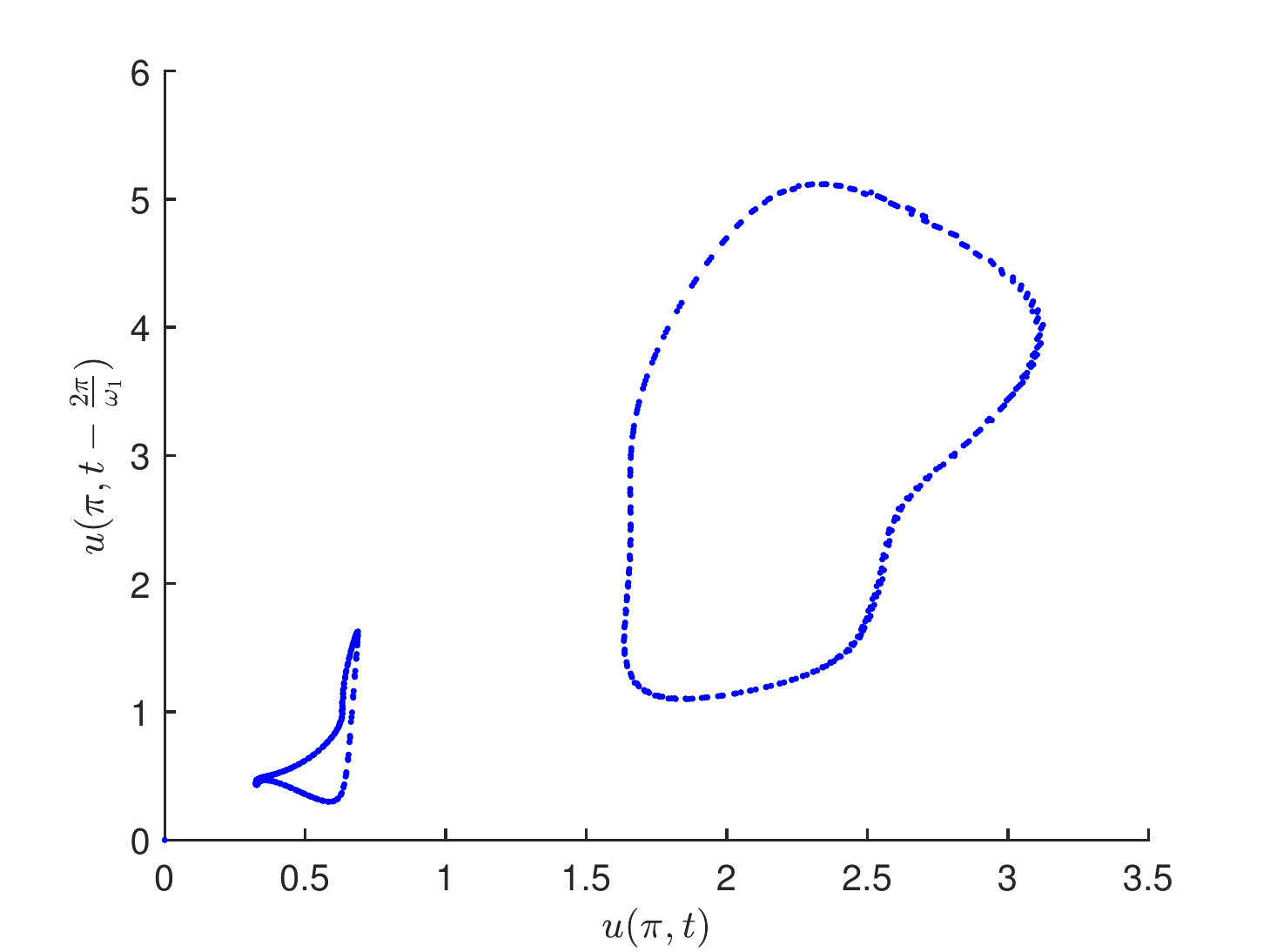}}
	\caption{The phase portraits in $u(\pi,t)-v(\pi,t)-u(\pi,t-\frac{2\pi}{\omega_1})$ coordinate and the corresponding Poincar\'{e} map on $v(\pi,t)=\lambda_0$ when $(\mu_1,\mu_2)=(0.06,-0.03)\in D_4$.}\label{fig-D4_poincare}
\end{figure}

\section{Discussion and conclusion}

In this paper, we develop an algorithm for computing normal forms associated with the codimension-two double Hopf bifurcation for a general reaction-diffusion system with spatial average nonlocal kernel and Neumann boundary conditions.
The algorithm looks complicated, but it is actually easy for computer implementation, especially when the system consists of only two variables.
We introduce a Boolean function to handle the effects of nonlocal terms on the computation of normal forms. The system can exhibit rich dynamics near the double Hopf bifurcation singularity, and the possible attractors near this degenerated point mainly include spatially homogeneous/nonhomogeneous periodic solutions,
spatially nonhomogeneous quasi-periodic solutions.

We apply
our result to a reaction-diffusion Holling-Tanner system with nonlocal prey competition. The qualitative analysis reveals that the dynamic behaviors of the system with nonlocal terms is more complex than that of the original system.
The unfolding of $(IVa)$ (see Table.\ref{twelve_unfoldings}) occurs in our numerical simulations, and the spatially homogeneous and 
nonhomogeneous periodic solutions are observed from numerical simulations.
Furthermore, The existence of the spatially nonhomogeneous quasi-periodic solution is
verified in Poincar\'{e} section.

\appendix
\section{Appendix}
This appendix is an extension of the case that $n=2$, and we shall show the
details of the coefficient vectors that appear in Section \ref{section_Normal_Form}.
Without loss of generality, we denote
\begin{equation*}
	\phi_1=\left(\begin{array}{c}
	1\\
	q_1
	\end{array}\right),~
		\phi_2=\left(\begin{array}{c}
	1\\
	q_2
	\end{array}\right),~
		\phi_3=\left(\begin{array}{c}
	1\\
	q_3
	\end{array}\right),~
		\phi_4=\left(\begin{array}{c}
	1\\
	q_4
	\end{array}\right),~
\end{equation*}
with $q_1=\bar{q}_2$, $q_3=\bar{q}_4$. Note that $n_2>n_1\geq 0$, i.e., $\delta(n_2)=0$, then for $U=(u,v)^{^T}\in X_{\mathbb{C}}$,
we have
\begin{equation*}
    \begin{array}{ll}
	U&=\left(\begin{array}{c}
	u\\ v
	\end{array}\right)=\left(\begin{array}{c}
	1\\q_1 \end{array}\right)z_1\xi_{n_1}+\left(\begin{array}{c}
	1\\q_2 \end{array}\right)z_2\xi_{n_1}+\left(\begin{array}{c}
	1\\q_3 \end{array}\right)z_3\xi_{n_2}+\left(\begin{array}{c}
	1\\q_4 \end{array}\right)z_4\xi_{n_2}+w,\\
	\widehat{U}&=\left(\begin{array}{c}
	\widehat{u}\\ \widehat{v}
	\end{array}\right)=\left(\begin{array}{c}
	1\\q_1 \end{array}\right)z_1\widehat{\xi}_{n_1}+\left(\begin{array}{c}
	1\\q_2 \end{array}\right)z_2\widehat{\xi}_{n_1}
	+\widehat{w}.
	\end{array}
\end{equation*}
Then the coefficient vectors $F_{\iota_1\iota_2\iota_3\iota_4}$, $F_{wz_i}$, $F_{\widehat{w}z_i}$ shown in section \ref{section_Normal_Form} can be obtained by computing the following partial derivatives, where $F_{uu}=\frac{\partial^2}{\partial u^2}F(0,0,\mu_0)$, and other  symbols are similarly defined:
\begin{equation*}
  \begin{array}{ll}
  F_{w_1z_1}=2\big(F_{uu}+F_{uv}q_1+F_{u\widehat{u}}\delta(n_1)
             +F_{u\widehat{v}}q_1\delta(n_1)\big),~
  &F_{\widehat w_1z_1}=2\big(F_{u\widehat{u}}+F_{v\widehat{u}}q_1
             +F_{\widehat{u}\widehat{u}}\delta(n_1)
             +F_{\widehat{u}\widehat{v}}q_1\delta(n_1)\big),\\

  F_{w_2z_1}=2\big(F_{uv}+F_{vv}q_1+F_{v\widehat{u}}\delta(n_1)
             +F_{v\widehat{v}}q_1\delta(n_1)\big),~\,
  &F_{\widehat w_2z_1}=2\big(F_{u\widehat{v}}+F_{v\widehat{v}}q_1
             +F_{\widehat{u}\widehat{v}}\delta(n_1)
             +F_{\widehat{v}\widehat{v}}q_1\delta(n_1)\big),\\

  F_{w_1z_3}=2\big(F_{uu}+F_{uv}q_3\big),~
 &F_{\widehat w_1z_3}=2\big(F_{u\widehat{u}}+F_{v\widehat{u}}q_3
             \big),\\

  F_{w_2z_3}=2\big(F_{uv}+F_{vv}q_3\big),~\,
  &F_{\widehat w_2z_3}=2\big(F_{u\widehat{v}}+F_{v\widehat{v}}q_3
             \big),\\
 \end{array}
\end{equation*}
and
\begin{equation*}
     \begin{array}{ll}
    F_{w_1z_2}=\overline{F_{w_1z_1}},~
    F_{w_2z_2}=\overline{F_{w_2z_1}},~
    F_{w_1z_4}=\overline{F_{w_1z_3}},~
    F_{w_2z_4}=\overline{F_{w_2z_3}},\\
    F_{\widehat{w}_1z_2}=\overline{F_{\widehat{w}_1z_1}},~
    F_{\widehat{w}_2z_2}=\overline{F_{\widehat{w}_2z_1}},~
    F_{\widehat{w}_1z_4}=\overline{F_{\widehat{w}_1z_3}},~
    F_{\widehat{w}_2z_4}=\overline{F_{\widehat{w}_2z_3}}.
     \end{array}
\end{equation*}
The coefficient vectors required in $\widetilde{F}_2(z,0,0,0)$ are given by
\begin{equation*}
  \begin{array}{ll}
   F_{2000}=&F_{uu}+ F_{vv}q_1^2+ F_{\widehat{u}\widehat{u}}\delta(n_1)+
            F_{\widehat{v}\widehat{v}}q_1^2\delta(n_1)
            +2\big[F_{uv}q_1 +F_{u\widehat{u}}\delta(n_1)
            +F_{u\widehat{v}}q_1\delta(n_1)\\
            &+F_{v\widehat{u}}q_1\delta(n_1)
            +F_{v\widehat{v}}q_1^2\delta(n_1)
            +F_{\widehat{u}\widehat{v}}q_1\delta(n_1)\big],\\
   F_{1100}=&2\big[F_{uu}+F_{uv}(q_1+q_2)+2F_{u\widehat{u}}\delta(n_1)
            +F_{u\widehat{v}}(q_1+q_2)\delta(n_1)
            +F_{vv}q_1q_2+F_{v\widehat{u}}(q_1+q_2)\delta(n_1)\\
            &+2F_{v\widehat{v}}q_1q_2\delta(n_1)
            +F_{\widehat{u}\widehat{u}}\delta(n_1)
            +F_{\widehat{u}\widehat{v}}(q1+q2)\delta(n_1)
            +F_{\widehat{v}\widehat{v}}q_1q_2\delta(n_1)\big],\\
   F_{1010}=&2\big[F_{uu}+F_{uv}(q_1+q_3)
            +F_{u\widehat{u}}\delta(n_1)
            +F_{u\widehat{v}}q_1\delta(n_1)
            +F_{vv}q_1q_3
            +F_{v\widehat{u}}
            q_3\delta(n_1)
            +F_{v\widehat{v}}q_1q_3\delta(n_1)
            \big],\\
   F_{1001}=&2\big[F_{uu}+F_{uv}(q_1+q_4)
            +F_{u\widehat{u}}\delta(n_1)
            +F_{u\widehat{v}}q_1\delta(n_1)
            +F_{vv}q_1q_4
            +F_{v\widehat{u}}q_4\delta(n_1)
            +F_{v\widehat{v}}q_1q_4\delta(n_1)
           \big],\\
   F_{0020}=&F_{uu}+F_{vv} q_3^2
             +2 F_{uv}q_3,\\
   F_{0011}=&2\big[F_{uu}+F_{uv}(q_3+q_4)
             +F_{vv}q_3q_4
             \big],\\

   F_{0200}=&\overline{F_{2000}},~F_{0101}=\overline{F_{1010}},
   F_{0002}=\overline{F_{0020}},~F_{0110}=\overline{F_{1001}},\\
  \end{array}
\end{equation*}
and those in $\widetilde{F}_3(z,0,0,0)$ are as follows:
\begin{equation*}
  \begin{array}{ll}
    F_{2100}=&3\big[F_{uuu}+F_{uuv}(2q_1+q_2)
             +3F_{uu\widehat{u}}\delta(n_1)
             +F_{uu\widehat{v}}(2q_1+q_2)\delta(n_1)
             +F_{uvv}q_1(2q_2+q_1)\\
             &+2F_{uv\widehat{u}}(2q_1+q_2)\delta(n_1)
             +2F_{uv\widehat{v}}q_1(2q_2+q_1)\delta(n_1)
             +3F_{u\widehat{u}\widehat{u}}\delta(n_1)
             +2F_{u\widehat{u}\widehat{v}}\delta(n_1)(2q_1+q_2)\\
             &+F_{u\widehat{v}\widehat{v}}q_1(2q_2+q_1)\delta(n_1)
             +F_{vvv}q_1^2q_2
             +F_{vv\widehat{u}}q_1(2q_2+q_1)\delta(n_1)
             +3F_{vv\widehat{v}}q_1^2q_2\delta(n_1)\\
             &+F_{v\widehat{u}\widehat{u}}(2q_1+q_2)\delta(n_1)
             +2F_{v\widehat{u}\widehat{v}}\delta(n_1)(q_1^2+2q_1q_2)
             +3F_{v\widehat{v}\widehat{v}}q_1^2q_2\delta(n_1)
             +F_{\widehat{u}\widehat{u}\widehat{u}}\delta(n_1)\\
             &+F_{\widehat{u}\widehat{u}\widehat{v}}\delta(n_1)(q_2+2q_1)
             +F_{\widehat{u}\widehat{v}\widehat{v}}\delta(n_1)(q_1^2+2q_1q_2)
             +F_{\widehat{v}\widehat{v}\widehat{v}}q_1^2q_2\delta(n_1)
              \big],\\

    F_{1011}=&6\big[F_{uuu}+F_{uuv}(q_1+q_3+q_4)
             +F_{uu\widehat{u}}\delta(n_1)
             +F_{uu\widehat{v}}q_1\delta(n_1)
             +F_{uvv}(q_3q_4+q_1q_3+q_1q_4)\\
             &+F_{uv\widehat{u}}(q_3+q_4)\delta(n_1)
             +F_{uv\widehat{v}}(q_1q_4+q_1q_3)\delta(n_1)
             +F_{vvv}q_1q_3q_4
             +F_{vv\widehat{u}}q_3q_4\delta(n_1)
             +F_{vv\widehat{v}}q_1q_3q_4\delta(n_1)\big].\\

     F_{0021}=&3\big[F_{uuu}+F_{uuv}(2q_3+q_4)
             +F_{uvv}q_3(2q_4+q_3)
             +F_{vvv}q_3^2q_4
             \big],\\
     F_{1110}=&6\big[F_{uuu}+F_{uuv}(q_3+q_1+q_2)
              +2F_{uu\widehat{u}}\delta(n_1)
              +F_{uu\widehat{v}}\delta(n_1)(q_1+q_2)
              +F_{uvv}(q_1q_2+q_3q_1+q_3q_2)\\
              &+F_{uv\widehat{u}}(2q_3+q_1+q_2)\delta(n_1)
              +F_{uv\widehat{v}}(2q_1q_2+q_3q_2+q_3q_1)\delta(n_1)
              +F_{u\widehat{u}\widehat{u}}\delta(n_1)
              +F_{u\widehat{u}\widehat{v}}\delta(n_1)(q_1+q_2)\\
              &+F_{u\widehat{v}\widehat{v}}\delta(n_1)q_1q_2
              +F_{vvv}q_3q_1q_2
              +F_{vv\widehat{u}}(q_3q_2+q_3q_1)\delta(n_1)
              +2F_{vv\widehat{v}}q_1q_2q_3 \delta(n_1)\\
              &+F_{v\widehat{u}\widehat{u}}\delta(n_1)q_3
              +F_{v\widehat{u}\widehat{v}}\delta(n_1)(q_3q_1+q_3q_2)
              +F_{v\widehat{v}\widehat{v}}q_3q_1q_2\delta(n_1)
              \big].\\
  \end{array}
\end{equation*}

\end{document}